\RequirePackage{fix-cm}
\documentclass[smallextended]{svjour3}       % onecolumn (second format)
\smartqed  % flush right qed marks, e.g. at end of proof
\usepackage{graphicx}
\usepackage{placeins}
\usepackage{graphicx}%
\usepackage{multirow}%
\usepackage{amsmath,amssymb,amsfonts}%
\usepackage{amsmath}%
\usepackage{mathrsfs}%
\usepackage[title]{appendix}%
\usepackage{xcolor}%
\usepackage{textcomp}%
\usepackage{manyfoot}%
\usepackage{booktabs}%
\usepackage{algorithm}%
\usepackage{algorithmicx}%
\usepackage{algpseudocode}%
\usepackage{listings}%
\DeclareMathOperator*{\argmin}{argmin}
\usepackage{xparse}
\DeclareFontFamily{U}{ntxmia}{}
\DeclareFontShape{U}{ntxmia}{m}{it}{<-> ntxmia }{}
\DeclareFontShape{U}{ntxmia}{b}{it}{<-> ntxbmia }{}
\DeclareSymbolFont{lettersA}{U}{ntxmia}{m}{it}
\SetSymbolFont{lettersA}{bold}{U}{ntxmia}{b}{it}

\ExplSyntaxOn
\NewDocumentCommand{\varmathbb}{m}
 {
  \tl_map_inline:nn { #1 }
   {
    \use:c { varbb##1 }
   }
 }
\tl_map_inline:nn { ABCDEFGHIJKLMNOPQRSTUVWXYZ }
 {
  \exp_args:Nc \DeclareMathSymbol{varbb#1}{\mathord}{lettersA}{\int_eval:n { `#1+67 }}
 }
\exp_args:Nc \DeclareMathSymbol{varbbk}{\mathord}{lettersA}{169}
\ExplSyntaxOff
\newcommand{\mc}[1]{\mathcal{#1}}
\newcommand{\prox}{\mathrm{prox}}
\newcommand{\T}{\varmathbb{T}}
\newtheorem{fact}{Fact}
\usepackage[left=1in,right=1in]{geometry}
\begin{document}

\title{Optimal Acceleration for Proximal Minimization of the \\
Sum of Convex and Strongly Convex Functions
%\thanks{Grants or other notes
%about the article that should go on the front page should be
%placed here. General acknowledgments should be placed at the end of the article.}
}
% \subtitle{Do you have a subtitle?\\ If so, write it here}

% \titlerunning{Fast Douglas--Rachford Splitting}        % if too long for running head

\author{Govind M. Chari \and
        Uijeong Jang  \and
        Ernest K. Ryu \and
        Beh\c{c}et A\c{c}\i{}kme\c{s}e
}

\authorrunning{Chari, Jang, Ryu, and A\c{c}\i{}kme\c{s}e} % if too long for running head

\institute{Govind M. Chari \at
              University of Washington, Seattle \\
              \email{gchari@uw.edu}           %  \\
        \and
           Uijeong Jang \at
              University of California, Los Angeles \\
              \email{uijeongjang@math.ucla.edu}           %  \\
        \and
           Ernest K. Ryu \at
              University of California, Los Angeles \\
              \email{eryu@math.ucla.edu}           %  \\
        \and
           Beh\c{c}et A\c{c}\i{}kme\c{s}e \at
              University of California, Berkeley \\
              \email{behcet@berkeley.edu}           %  \\
}

\date{Received: date / Accepted: date}
% The correct dates will be entered by the editor

\maketitle

\begin{abstract}
When minimizing the sum of a convex and a strongly convex function, or when finding the zero of the sum of a monotone operator and a strongly monotone operator, Chambolle and Pock (2010) and Davis and Yin (2015) proposed accelerated mechanisms that achieve an $\mathcal{O}(1/N^2)$ convergence rate for the squared distance to the solution, but the optimality of this rate was not established. In this work, we present Fast Douglas--Rachford Splitting (FDR), an accelerated method that improves the constants established in the prior works, and provide a complexity lower bound establishing that both the $\mathcal{O}(1/N^2)$ convergence rate and the leading-order constant of FDR's rate are optimal.
% \keywords{Douglas--Rachford splitting \and acceleration \and complexity lower bounds}
% \PACS{PACS code1 \and PACS code2 \and more}
% \subclass{90C25 \and 47H05 \and 65K05}
\end{abstract}

\section{Introduction}
We consider the composite minimization problem

\begin{equation}\label{eq:problem}
    \underset{x \in \mathbb{R}^d}{\text{minimize}}
    \quad f(x) + g(x)
\end{equation}
where $f\colon \mathbb{R}^d \to \mathbb{R} \cup \{\infty\}$ is closed, convex, and proper and  $g\colon \mathbb{R}^d \to \mathbb{R} \cup \{\infty\}$ is closed, proper, and $\mu$-strongly convex. When $f$ and $g$ are \emph{proximable}, i.e., their proximal operators can be evaluated efficiently, Douglas--Rachford splitting (DRS) \cite{douglas1956numerical,lions1979splitting}
\begin{align*}
    x_{k+1/2}&=\prox_{\alpha g}(z_k)\\
    x_{k+1}&=\prox_{\alpha f}(2x_{k+1/2}-z_k)\\
    z_{k+1}&=z_k + x_{k+1}-x_{k+1/2},
\end{align*}
with stepsize $\alpha>0$, is a widely used solution method.

While convergence of DRS only requires $g$ to be convex, strong convexity of $g$ produces the rate

\[
\|x_N-x_\star\|^2=\mc{O}(1/N).
\]

The problem setup can be further generalized to the monotone inclusion problem
\begin{equation}\label{eq:mi}
    \underset{x \in \mathbb{R}^d}{\text{find}}
    \quad 0\in (\varmathbb{A}+\varmathbb{B})x
\end{equation}
where $\varmathbb{A}$ is maximal monotone and $\varmathbb{B}$ is maximal monotone and $\mu$-strongly monotone.

For a maximal monotone operator $\varmathbb{A}$, let
\[
    \varmathbb{J}_{\alpha \varmathbb{A}}= (\varmathbb{I}+\alpha \varmathbb{A})^{-1}
\]
denote its resolvent. Douglas--Rachford splitting applied to
\eqref{eq:mi}, with stepsize \(\alpha>0\), is
\begin{align*}
    x_{k+1/2} &= \varmathbb{J}_{\alpha \varmathbb{B}}(z_k)\\
    x_{k+1}   &= \varmathbb{J}_{\alpha \varmathbb{A}}(2x_{k+1/2}-z_k)\\
    z_{k+1}   &= z_k+x_{k+1}-x_{k+1/2},
\end{align*}
and attains the same $\mathcal{O}(1/N)$ rate under $\mu$-strong monotonicity of $\varmathbb{B}$.

In optimization, an \emph{acceleration} is a modification of a base algorithm that improves the worst-case convergence rate of a given performance measure. While the classical Nesterov acceleration \cite{nesterov1983method} achieves the optimal accelerated rate for function-value suboptimality, the modern theory of accelerations considers a broader range of performance measures, including distance to the solution and the squared fixed-point residual. In particular, the Chambolle--Pock~\cite{Chambolle2010} and Davis--Yin splitting~\cite{davis2017three} methods admit accelerated variants---based on mechanisms distinct from Nesterov's acceleration---that can be applied to Problem~\eqref{eq:problem} to obtain an accelerated rate of $\mc{O}(1/N^2)$ for the squared distance to the optimal solution. However, it was unknown whether this $\mc{O}(1/N^2)$ rate is optimal for this problem class, or whether the leading constants achieved by these accelerated methods are tight.

\paragraph{Contributions.}
This work presents two main contributions. First, we present an improved accelerated algorithm for Problems~\eqref{eq:problem} and~\eqref{eq:mi} whose leading constant improves upon those of the accelerated Chambolle--Pock and Davis--Yin methods by a factor of \(4\). Specifically, the iterates satisfy
\[
\|x_N-x_\star\|^2
\leq
\frac{\|x_0-x_\star\|^2+\|u_0-u_\star\|^2}{1+4N^2\mu^2}
\sim
\frac{\|x_0-x_\star\|^2+\|u_0-u_\star\|^2}{{\color{red}4}N^2\mu^2},
\]
where \(x_\star\) denotes an optimal solution, \(u_\star\) is a corresponding dual solution, $x_0$ and $u_0$ are the primal and dual initializations respectively, \(\mu\) is the strong convexity parameter, and \(N\) is the iteration count. Second, we establish a matching complexity lower bound showing that both the \(\mc{O}(1/N^2)\) convergence rate and the leading-order term \((\|x_0-x_\star\|^2+\|u_0-u_\star\|^2)/4N^2\mu^2\) are optimal for this problem class.

\subsection{Notation and preliminaries}

Throughout, $\mathbb{N}$ denotes the set of natural numbers, $\mathbb{R}^d$ the $d$-dimensional Euclidean space, $\langle \cdot, \cdot \rangle$ the standard inner product, and $\|\cdot\|$ the induced norm.

We recall standard definitions from convex analysis and monotone operator theory \cite{rockafellar1970convex,Bauschke2011,ryu2022LSCMO}.

A function $f\colon \mathbb{R}^d \to \mathbb{R} \cup \{\infty\}$ is \textit{closed} if its epigraph, defined as 

\[
\mathrm{epi} \, f = \{(x, t) \in \mathbb{R}^d \times \mathbb{R} \; | \; f(x) \le t\},
\]

is a closed set. It is \textit{convex} if

\[
f(\theta x + (1-\theta)y) \le \theta f(x) + (1-\theta)f(y) \quad \forall x, y \in \mathbb{R}^d \text{ and } \theta\in[0,1],
\]

and \textit{proper} if it never takes the value $-\infty$ and has finite value for some $x_0 \in \mathbb{R}^d$.

The \textit{subdifferential} of $f$ at $x$ is

\[
\partial f(x) = \{v \in \mathbb{R}^d \; | \; f(y) \ge f(x) + \langle v, y-x \rangle, \forall y \in \mathbb{R}^d\}.
\]

We write $f'(x) \in \partial f(x)$ to denote a subgradient.

A function $g\colon \mathbb{R}^d \to \mathbb{R} \cup \{\infty\}$ is $\mu$-strongly convex if $g(x) - \frac{\mu}{2}\|x\|^2$ is convex, or equivalently, if for all $x, y\in \mathbb{R}^d$ and all $v \in \partial g(x)$,

\[
g(y) \ge g(x) + \langle v, y-x \rangle + \frac{\mu}{2}\|y-x\|^2.
\]

If $f$ is closed, convex, and proper, then its subdifferential $\partial f$ is a \textit{monotone operator}, which means that

\[
\langle v - w, x-y \rangle \ge 0 
\quad \forall x,y \in \mathbb{R}^d,
\; \forall v \in \partial f(x),
\; \forall w \in \partial f(y).
\]

Similarly, if $g$ is closed, proper, and $\mu$-strongly convex, then $\partial g$ is a \textit{$\mu$-strongly monotone operator}, meaning that

\[
\langle v - w, x-y \rangle \ge \mu \|x-y\|^2 
\quad \forall x,y \in \mathbb{R}^d,
\; \forall v \in \partial g(x),
\; \forall w \in \partial g(y).
\]

For a function $f: \mathbb{R}^d \to \mathbb{R} \cup \{\infty\}$, define

\[
\underset{x\in \mathbb{R}^d}{\argmin} \; f(x) = \{z \in \mathbb{R}^d \; | \; f(z) \le f(x) \quad \forall x \in \mathbb{R}^d \}.
\]

For a closed, convex, and proper function $f$ and $\gamma > 0$, the proximal operator $\prox_{\gamma f}:\mathbb{R}^d \to \mathbb{R}^d$ is

\[
    \prox_{\gamma f}(x) = \underset{z\in \mathbb{R}^d}{\mathrm{argmin}} \; \left\{ f(z) + \frac{1}{2\gamma}\|z-x\|^2\right\}
\]

From the optimality conditions of this minimization problem, if $y = \prox_{\gamma f}(x)$, then

\[
0 \in \partial f(y) + \frac{1}{\gamma}(y-x).
\]

Equivalently, 

\[
y = x - \gamma f'(y).
\]

Consider the primal-dual problem pair

\begin{equation}\label{eq:primal}
    \underset{x \in \mathbb{R}^d}{\text{minimize}}
    \quad f(x) + g(x) \tag{\ensuremath{\mc{P}}}
\end{equation}

\begin{equation}\label{eq:dual}
    \underset{u \in \mathbb{R}^d}{\text{maximize}}
    \quad -f^*(-u) - g^*(u) \tag{\ensuremath{\mc{D}}}
\end{equation}
where $f\colon \mathbb{R}^d \to \mathbb{R} \cup \{\infty\}$ is closed, convex, and proper, and $g: \mathbb{R}^d \to \mathbb{R} \cup \{\infty\}$ is closed, proper, and $\mu$-strongly convex. Assume that the primal problem \eqref{eq:primal} admits a minimizer $x_\star$, which is unique by strong convexity of $g$, and total duality holds. Let $u_\star \in \partial g(x_\star)$ denote a corresponding dual solution to \eqref{eq:dual}. Then necessarily $-u_\star \in \partial f(x_\star)$ \cite{ryu2022LSCMO}.

\begin{lemma}\label{lem:fixed-point}
    If $(x_\star,u_\star)$ is a primal-dual solution (i.e. $u_\star \in \partial g(x_\star)$ and $-u_\star \in \partial f(x_\star)$), then $x_\star = \prox_{\gamma g}(x_\star + \gamma u_\star)$ and $x_\star = \prox_{\gamma f}(x_\star - \gamma u_\star)$ for $\gamma > 0$.
\end{lemma}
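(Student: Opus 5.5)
The plan is to verify both identities directly from the optimality characterization of the proximal operator recalled above. For a closed, convex, proper function $h$ and $\gamma>0$, the objective $z\mapsto h(z)+\frac{1}{2\gamma}\|z-x\|^2$ is strongly convex. It therefore has a unique minimizer, and $y=\prox_{\gamma h}(x)$ holds if and only if $0\in\partial h(y)+\frac{1}{\gamma}(y-x)$. Equivalently, $\frac{1}{\gamma}(x-y)\in\partial h(y)$.

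We need the ``if'' direction, which is sufficiency of the first-order condition. This follows from the subgradient inequality. Suppose $v=\frac1\gamma(x-y)\in\partial h(y)$. Then for any $z$,
\[
h(z)+\tfrac{1}{2\gamma}\|z-x\|^2 \ge h(y)+\langle v,z-y\rangle+\tfrac{1}{2\gamma}\|z-x\|^2 .
\]
Expanding $\|z-x\|^2=\|(z-y)+(y-x)\|^2$, the cross term $\frac1\gamma\langle z-y,y-x\rangle$ cancels $\langle v,z-y\rangle$. The right-hand side then becomes $h(y)+\frac{1}{2\gamma}\|y-x\|^2+\frac{1}{2\gamma}\|z-y\|^2$, so $y$ is the minimizer. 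This step is routine.

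For the first identity we take $h=g$, $x=x_\star+\gamma u_\star$ and $y=x_\star$. Here $g$ is closed, proper, and strongly convex, hence convex, so the criterion applies. We get $\frac1\gamma(x-y)=u_\star\in\partial g(x_\star)$ by hypothesis, hence $x_\star=\prox_{\gamma g}(x_\star+\gamma u_\star)$.

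For the second identity we take $h=f$, $x=x_\star-\gamma u_\star$ and $y=x_\star$. Then $\frac1\gamma(x-y)=-u_\star\in\partial f(x_\star)$, again by hypothesis, so $x_\star=\prox_{\gamma f}(x_\star-\gamma u_\star)$.

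There is no real obstacle. The only point needing care is that the paper states only the necessity of the optimality condition, so the sufficiency argument above must be included. Uniqueness of the minimizer, from strong convexity of the prox objective, ensures $\prox$ is single-valued and the equality is meaningful.
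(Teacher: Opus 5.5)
Your proposal is correct and matches the paper's proof: both check the prox optimality condition $\frac{1}{\gamma}(x-y)\in\partial h(y)$ at $y=x_\star$, using $u_\star\in\partial g(x_\star)$ and $-u_\star\in\partial f(x_\star)$. Your explicit argument that this condition is sufficient, via the subgradient inequality, fills in a step the paper uses only implicitly.
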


\begin{proof}
    We can verify the equalities by writing optimality conditions for the proximal operator

    \begin{align*}
    0 
    &\in \partial g(x_\star) 
       + \frac{1}{\gamma}\big(x_\star - (x_\star + \gamma u_\star)\big) \\
    &= \partial g(x_\star) - u_\star.
    \end{align*}
    
    Since $u_\star \in \partial g(x_\star)$, the inclusion holds.  

    \begin{align*}
    0 
    &\in \partial f(x_\star) 
       + \frac{1}{\gamma}\big(x_\star - (x_\star - \gamma u_\star)\big) \\
    &= \partial f(x_\star) + u_\star.
    \end{align*}
    
    Since $-u_\star \in \partial f(x_\star)$, the inclusion holds.  

\end{proof}

\subsection{Acceleration mechanisms and performance measures}
We quickly review other acceleration mechanisms that have been published in prior works. Note that each of these acceleration mechanisms is distinct in the sense that the metric they accelerate differs.

\paragraph{Nesterov-type Acceleration.}
The classical Nesterov accelerated gradient method (NAG) reduces the function-value suboptimality at an accelerated $\mathcal{O}(1/N^2)$ rate \cite{nesterov1983method}. In the modern literature, the Optimized Gradient Method (OGM) \cite{drori2014performance,kim2016optimized,taylor2017smooth} improves the leading constant of NAG by a factor of 2 and attains the exact optimal worst-case rate for this setup with an exact matching complexity lower bound \cite{drori2017exact}.

In the composite minimization setup where one function is smooth and the other is proximable, NAG can be extended to FISTA \cite{beck2009fast} and OGM to OptISTA \cite{jang2025computer}. Analogously, OptISTA is exactly optimal for the smooth composite convex minimization setup and improves the leading constant of FISTA by a factor of $2$.

In Problem~\eqref{eq:problem}, however, neither function is assumed to be smooth, so Nesterov-type acceleration does not apply directly. To clarify the distinction, suppose instead that \(g\) were \(L\)-smooth rather than \(\mu\)-strongly convex. Then FISTA applied to Problem \eqref{eq:problem} is

\begin{equation}
    \begin{aligned} 
    & x_{k+1} = \prox_{f/L}\left(y_k - \frac{1}{L} \nabla g(y_k)\right) \\
    & y_{k+1} = x_{k+1} + \left( \frac{t_k - 1}{t_{k+1}}\right) (x_{k+1}-x_{k}) \\
    \end{aligned}
    \tag{\textup{FISTA}}\label{eq:FISTA_Alg}
\end{equation}
where

\[
 y_1 = x_1 \in \mathbb{R}^d, \; t_1 = 1, \; t_{k+1} = \frac{1+\sqrt{1 + 4t_k^2}}{2}.
\]

\begin{fact}
    If $\{x_k\}$ is generated by \ref{eq:FISTA_Alg}, then for any $N \ge 1$, the following bound is satisfied for any minimizer $x_\star$ \cite[Theorem 4.4]{beck2009fast}
    \[
    f(x_{N}) + g(x_{N}) - f(x_\star) - g(x_\star) \le \frac{2L \|x_1 - x_\star\|^2}{N^2}
    \]
\end{fact}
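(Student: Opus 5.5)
The plan is the standard Beck--Teboulle potential-function argument for the Fact. The ingredients are the smooth-plus-prox descent inequality and an energy that combines $t_k^2$ times the suboptimality with a distance term. Write $F=f+g$ and $F_\star=F(x_\star)$. Here $g$ is convex and $L$-smooth, and $x_{k+1}=\prox_{f/L}(y_k-\nabla g(y_k)/L)$.

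\textbf{Step 1 (key inequality).} I would first show that for any $x$,
\[
F(x)-F(x_{k+1})\ge \frac{L}{2}\|x_{k+1}-y_k\|^2+L\langle y_k-x,\,x_{k+1}-y_k\rangle .
\]
The argument uses three facts. The descent lemma gives $g(x_{k+1})\le g(y_k)+\langle\nabla g(y_k),x_{k+1}-y_k\rangle+\frac L2\|x_{k+1}-y_k\|^2$. Convexity gives $g(x)\ge g(y_k)+\langle \nabla g(y_k),x-y_k\rangle$. The prox optimality condition, in the form $y=x-\gamma f'(y)$ from the preliminaries, gives a subgradient $f'(x_{k+1})=L(y_k-x_{k+1})-\nabla g(y_k)$, and hence $f(x)\ge f(x_{k+1})+\langle f'(x_{k+1}),x-x_{k+1}\rangle$. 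Adding these three inequalities and simplifying yields the claim.

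\textbf{Step 2 (potential decrease).} Let $v_k=F(x_k)-F_\star$ and $u_k=t_kx_k-(t_k-1)x_{k-1}-x_\star$. I would apply Step 1 twice: once with $x=x_k$ and once with $x=x_\star$. I would then multiply the first by $(t_{k+1}-1)$ and add the second. This gives
\[
(t_{k+1}-1)v_k-t_{k+1}v_{k+1}\ge \tfrac{L}{2}t_{k+1}\|x_{k+1}-y_k\|^2+L\langle x_{k+1}-y_k,\,t_{k+1}y_k-(t_{k+1}-1)x_k-x_\star\rangle .
\]
Multiplying by $t_{k+1}$ and using $t_{k+1}^2-t_{k+1}=t_k^2$, which follows from the recursion for $t_k$, the left side becomes $t_k^2v_k-t_{k+1}^2v_{k+1}$. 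The right side completes the square via $\|b-a\|^2+2\langle b-a,a-c\rangle=\|b-c\|^2-\|a-c\|^2$ with $a=t_{k+1}y_k$, $b=t_{k+1}x_{k+1}$ and $c=(t_{k+1}-1)x_k+x_\star$. The momentum rule gives $t_{k+1}y_{k+1}=t_{k+1}x_{k+1}+(t_k-1)(x_{k+1}-x_k)$, which identifies the resulting vectors with $u_{k+1}$ and $u_k$. Together these give
\[
t_k^2v_k-t_{k+1}^2v_{k+1}\ge \tfrac L2\big(\|u_{k+1}\|^2-\|u_k\|^2\big).
\]
Thus $E_k=t_k^2v_k+\frac L2\|u_k\|^2$ is nonincreasing. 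This index bookkeeping is the main obstacle: the vector identity must line up exactly with the update $y_{k+1}=x_{k+1}+\frac{t_k-1}{t_{k+1}}(x_{k+1}-x_k)$, and I would check it by writing out $u_k$ in terms of $y_k$.

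\textbf{Step 3 (conclusion).} Since $t_1=1$, we have $E_1=v_1+\frac L2\|x_1-x_\star\|^2$. Applying Step 1 with $x=x_\star$ and the index shifted to produce $x_1$ (taking $y_0=x_1$), or simply handling the base case directly, gives $v_1\le\frac L2\|x_1-x_\star\|^2$, hence $E_1\le L\|x_1-x_\star\|^2$. Monotonicity then yields $t_N^2v_N\le L\|x_1-x_\star\|^2$. Finally, induction on $t_{k+1}\ge t_k+\frac12$ gives $t_N\ge\frac{N+1}{2}$, so
\[
v_N\le \frac{4L\|x_1-x_\star\|^2}{(N+1)^2}\le\frac{2L\|x_1-x_\star\|^2}{N^2}
\]
for $N\ge1$. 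This is the statement of the Fact.
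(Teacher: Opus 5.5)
Your plan is the Beck--Teboulle potential argument that the paper relies on (the paper gives no proof, only the citation to their Theorem~4.4), and your Step~1 is correct. The argument still does not close.

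The last line fails as written. From $t_N^2v_N\le L\|x_1-x_\star\|^2$ and $t_N\ge (N+1)/2$ you get $4L\|x_1-x_\star\|^2/(N+1)^2$. But $4/(N+1)^2\le 2/N^2$ holds only for $N\le 1+\sqrt2$. It is false for every $N\ge3$ (at $N=3$, $1/4>2/9$), and asymptotically your bound is twice the target.

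The factor $2$ is lost in the base case, where you add the separate estimate $v_1\le\frac L2\|x_1-x_\star\|^2$ to $\frac L2\|u_1\|^2=\frac L2\|x_1-x_\star\|^2$. The right base case applies Step~1 with $x=x_\star$ to the first prox step $x^+$ computed at $y$. Since $\frac L2\|x^+-y\|^2+L\langle y-x_\star,x^+-y\rangle=\frac L2\|x^+-x_\star\|^2-\frac L2\|y-x_\star\|^2$, this bounds $F(x^+)-F_\star+\frac L2\|x^+-x_\star\|^2$, the whole initial energy, by $\frac L2\|y-x_\star\|^2$ at once.

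Moreover, your estimate $v_1\le\frac L2\|x_1-x_\star\|^2$ is not available at all. In the paper's indexing $x_1=y_1$ is the arbitrary initialization, not a prox output. Step~1 with ``$y_0=x_1$'' bounds $F(\prox_{f/L}(x_1-\nabla g(x_1)/L))=F(x_2)$, not $F(x_1)$, and $F(x_1)$ can be $+\infty$. For the same reason the Fact as literally written fails at $N=1$ (take $x_1\notin\mathrm{dom}\,f$). Beck--Teboulle's theorem, whose $x_0,x_k$ are the paper's $x_1,x_{k+1}$, gives it only for $N\ge2$.

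Step~2 also has the $t$-indices shifted by one. The inequality you derive contains $t_{k+1}y_k$, where $y_k$ is the point at which $x_{k+1}$ is computed. The momentum rule you quote concerns $t_{k+1}y_{k+1}$; shifted down one index it reads $t_ky_k=t_kx_k+(t_{k-1}-1)(x_k-x_{k-1})$. Hence $t_{k+1}y_k-(t_{k+1}-1)x_k-x_\star\neq u_k$ in general. Already at $k=2$ you have $y_2=x_2$ (because $t_1=1$), so the left side is $x_2-x_\star$, whereas $u_2=t_2x_2-(t_2-1)x_1-x_\star$ with $t_2-1=\frac{\sqrt5-1}{2}\neq0$. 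Your pairing is the one valid in Beck--Teboulle's convention, where the prox step producing $x_{k+1}$ is taken at $y_{k+1}=x_k+\frac{t_k-1}{t_{k+1}}(x_k-x_{k-1})$.

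In the paper's indexing, the fix is as follows:
\begin{itemize}
\item Multiply the $x=x_k$ inequality by $t_k-1$ and use $t_k^2-t_k=t_{k-1}^2$.
\item Take instead $E_{k+1}:=t_k^2v_{k+1}+\frac L2\|t_kx_{k+1}-(t_k-1)x_k-x_\star\|^2$.
\item Then $E_2\le\frac L2\|x_1-x_\star\|^2$ (Step~1 with $k=1$, $x=x_\star$, $y_1=x_1$), and $E_{k+1}\le E_k$ for $k\ge2$.
\item Finally $t_{N-1}\ge N/2$ yields $v_N\le L\|x_1-x_\star\|^2/(2t_{N-1}^2)\le 2L\|x_1-x_\star\|^2/N^2$ for $N\ge2$.
\end{itemize}
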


Nesterov acceleration can be applied to the corresponding dual of \eqref{eq:problem}, since the dual involves the sum of a convex and a smooth, convex function. However, the quantity that is reduced at an accelerated rate in this case is the dual-function-value suboptimality, and this is different from the distance to the solution.

\paragraph{Halpern Acceleration.}
The Halpern acceleration \cite{halpern1967fixed}, also referred to as the anchor acceleration in the modern literature, reduces the squared fixed-point residual at an accelerated $\mathcal{O}(1/N^2)$ rate \cite{kim2021accelerated,lieder2021convergence}.
Specifically, suppose $\T\colon \mathbb{R}^d \to \mathbb{R}^d$ is a non-expansive operator. Then, the optimized Halpern method (OHM) 
\[
x_{k+1} = \frac{1}{k+2}x_0 +
\frac{k+1}{k+2}
\T x_{k}
\]
exhibits the rate 
\[
\|\T x_{N-1} - x_{N-1}\|^2\le \frac{4\|x_0-x_\star\|^2}{N^2},
\]
where $x_\star \in \mathrm{Fix} \; \T$ (i.e. $x_\star = \T x_\star$).

OHM can be applied directly to Problem \eqref{eq:problem} by taking $\T$ to be the Peaceman--Rachford splitting (PRS) operator. This algorithm is
\begin{equation}
    \begin{aligned} 
    & y_{k+1} = 2\prox_{\alpha f}(x_k) - x_k\\
    & x_{k+1} = \frac{1}{k+2}x_0 + \frac{k+1}{k+2} ( 2\prox_{\alpha g}(y_{k+1}) - y_{k+1}), \\
    \end{aligned}
    \tag{\textup{OHM}}\label{eq:OHM}
\end{equation}
where $\alpha > 0$. This yields the rate
    \[
    \|\T_{\mathrm{PRS}} x_{N-1} - x_{N-1}\|^2 = \mc{O}(1/N^2),
    \]
    where $\T_{\mathrm{PRS}} = (2 \prox_{\alpha g} - \varmathbb{I}) \; \circ \; (2 \prox_{\alpha f} - \varmathbb{I})$ is the PRS operator.

However, the quantity that is reduced at an accelerated rate in this case is the fixed-point residual of the PRS operator, and this is different from the distance to the solution. 

\paragraph{Chambolle--Pock--Davis--Yin Acceleration.}
% The Chambolle--Pock--Davis--Yin acceleration was first introduced by

The accelerated Chambolle and Pock method \cite[Alg.~2]{Chambolle2010} applied to \eqref{eq:problem} is 
\begin{equation}
    \begin{aligned} 
    & u_{k+1} = u_k - \sigma_kz_k + \sigma_k \prox_{f/\sigma_k}(z_k-(1/\sigma_k)u_k) \\
    & x_{k+1} = \prox_{\tau_k g}(x_k + \tau_k u_{k+1}) \\
    & z_{k+1} = x_{k+1} + \theta_k (x_{k+1}-x_k), \\
    \end{aligned}
    \tag{\textup{CP}}\label{eq:CP}
\end{equation}
where
\[
 \tau_{k+1}=\theta_k \tau_k, \; \sigma_{k+1} = \sigma_k / \theta_k\;, \theta_k = \frac{1}{\sqrt{1+2\mu \tau_k}},
\]
and $\tau_0, \sigma_0 > 0$ with $\tau_0 \sigma_0 \leq 1$, and $z_0 = x_0$.
Similarly, the accelerated Davis--Yin method \cite[Alg.~2]{davis2017three} applied to \eqref{eq:problem} is 
\begin{equation}
\begin{aligned} 
& x_0 = \prox_{\gamma_0 g}(y_0) \\
& u_0 = \frac{1}{\gamma_0}(y_0-x_0) \\
& x_k = \prox_{\gamma_{k-1} g}(y_{k-1} + \gamma_{k-1} u_{k-1}) \\
& u_k = \frac{1}{\gamma_{k-1}}(y_{k-1} + \gamma_{k-1}u_{k-1}-x_k) \\
& y_k = \prox_{\gamma_{k} f}(x_{k} - \gamma_{k} u_{k}), \\
\end{aligned}
\tag{\textup{DYS}}\label{eq:DYS}
\end{equation}
where $y_{0}\in \mathbb{R}^d$ is an initialization, and the proximal stepsizes are given by
\[
 \gamma_{k+1}=\frac{\gamma_k}{\sqrt{1 + 2\gamma_k \mu}}\;\text{for }k=0,\ldots,N-1,
\]
where $\gamma_0 \in (0,\infty)$.

\begin{fact}
If $\{x_k\}$ is generated by \ref{eq:CP} with $\tau_0\sigma_0=1$, then for any $\epsilon>0$, there exists $N_0$ such that for all $N\ge N_0$ \cite[Theorem~2]{Chambolle2010},
    
\begin{align*}
\|x_N-x_\star\|^2
&\le
\frac{1+\epsilon}{N^2}
\left(
\frac{\|x_0-x_\star\|^2}{\mu^2\tau_0^2}
+
\frac{\|u_0-u_\star\|^2}{\mu^2}
\right),
\end{align*}
where $x_\star$ is the minimizer of Problem \eqref{eq:problem} and $u_\star \in \partial g(x_\star)$, equivalently $-u_\star \in \partial f(x_\star)$, is a corresponding dual solution.

\end{fact}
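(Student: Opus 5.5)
The plan is to derive the stated bound from the accelerated primal--dual analysis of \cite[Theorem~2]{Chambolle2010}, specialized to the identity coupling operator $K=\varmathbb{I}$ (so $\|K\|=1$) and to step sizes with $\tau_0\sigma_0=1$. The analysis has three ingredients: a one-step inequality, a telescoping argument, and the asymptotics of the step sizes.

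\textbf{One-step inequality.} Write the optimality conditions of the two proximal steps in \ref{eq:CP}:
\[
u_{k+1}\text{ satisfies } -u_{k+1}\in\partial f\big(z_k-(u_{k+1}-u_k)/\sigma_k\big),\qquad u_{k+1}-(x_{k+1}-x_k)/\tau_k\in\partial g(x_{k+1}).
\]
Compare these with the optimality conditions at $(x_\star,u_\star)$, which Lemma~\ref{lem:fixed-point} supplies in proximal form. Applying monotonicity of $\partial f$ and $\mu$-strong monotonicity of $\partial g$ then gives
\[
\frac{1}{2\tau_k}\|x_k-x_\star\|^2+\frac{1}{2\sigma_k}\|u_k-u_\star\|^2 \ \ge\ \Big(\frac{1}{2\tau_k}+\mu\Big)\|x_{k+1}-x_\star\|^2+\frac{1}{2\sigma_k}\|u_{k+1}-u_\star\|^2 + (\text{cross terms}).
\]

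\textbf{Telescoping.} The extrapolation $z_{k+1}=x_{k+1}+\theta_k(x_{k+1}-x_k)$ is chosen so that the cross terms $\langle x_{k+1}-x_k,\,u_{k+1}-u_\star\rangle$ telescope after multiplying the $k$-th inequality by $1/\tau_k$. The leftover quadratic terms are nonpositive because $\tau_k\sigma_k=\tau_0\sigma_0=1$, which is exactly the Chambolle--Pock step-size condition. The parameter choice $\theta_k=1/\sqrt{1+2\mu\tau_k}$ gives
\[
\frac{1}{\tau_k}\Big(\frac{1}{\tau_k}+2\mu\Big)=\frac{1}{\tau_{k+1}^2}.
\]
Summing from $0$ to $N-1$ therefore yields
\[
\frac{\|x_N-x_\star\|^2}{\tau_N^2}\ \le\ \frac{\|x_0-x_\star\|^2}{\tau_0^2}+\frac{\|u_0-u_\star\|^2}{\tau_0\sigma_0},
\]
with $\tau_0\sigma_0=1$.

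\textbf{Step-size asymptotics.} From $\tau_{k+1}=\tau_k/\sqrt{1+2\mu\tau_k}$ we get $1/\tau_{k+1}^2=1/\tau_k^2+2\mu/\tau_k$. Hence $1/\tau_k$ grows by roughly $\mu$ per step, and the standard Chambolle--Pock lemma gives $\lim_{N\to\infty} N\mu\tau_N=1$. Consequently, for any $\epsilon>0$ there is $N_0$ with $\tau_N^2\le(1+\epsilon)/(\mu^2N^2)$ for all $N\ge N_0$. Substituting this into the telescoped bound gives exactly the displayed estimate.

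The main obstacle is the telescoping step. One must verify carefully that, with $\sigma_k=1/\tau_k$ and the $\theta_k$ extrapolation, the indefinite cross terms cancel up to a nonpositive remainder. The last remaining term, $\langle x_N-x_{N-1},\,u_N-u_\star\rangle$, is handled by Young's inequality, which costs a constant factor that is absorbed by $\tau_N\sigma_N=1$. This bookkeeping is precisely the content of \cite[Theorem~2]{Chambolle2010}, so in practice I would cite it for the one-step inequality and only redo the asymptotic step explicitly.
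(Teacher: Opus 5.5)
Your proposal is correct and follows the same route as the paper, which simply invokes \cite[Theorem~2]{Chambolle2010} with $\|K\|=1$ and $\gamma=\mu$. Your sketch is exactly the argument behind that citation: telescope $\|x_k-x_\star\|^2/\tau_k^2+\|u_k-u_\star\|^2/(\tau_k\sigma_k)$, absorb the last cross term using Young's inequality together with $\tau_N\sigma_N=1$, and finish with $N\mu\tau_N\to 1$. One sign slip needs fixing: the $f$-step optimality condition is $-u_{k+1}\in\partial f\bigl(z_k+(u_{k+1}-u_k)/\sigma_k\bigr)$, and it is this plus sign that gives the dual terms in your one-step inequality their correct orientation.
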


\begin{fact}
If $\{x_k\}$ is generated by \ref{eq:DYS} and $x_\star$ is the minimizer of Problem \eqref{eq:problem}, then $\|x_N-x_\star\|^2 = \mc{O}(1/N^2)$ \cite[Theorem~1.2]{davis2017three}.
\end{fact}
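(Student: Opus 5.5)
The plan is a Lyapunov (potential-function) argument in which the varying stepsizes $\gamma_k$ are what produce the $\mathcal{O}(1/N^2)$ rate. First I would read off subgradients from the proximal optimality conditions stated in the preliminaries.

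From $x_k = \prox_{\gamma_{k-1} g}(y_{k-1}+\gamma_{k-1}u_{k-1})$ and the definition of $u_k$ we get exactly $u_k \in \partial g(x_k)$. From $y_k=\prox_{\gamma_k f}(x_k-\gamma_k u_k)$ we get
\[
w_k := \tfrac{1}{\gamma_k}(x_k-y_k)-u_k \in \partial f(y_k).
\]
The solution pair satisfies $u_\star\in\partial g(x_\star)$ and $-u_\star\in\partial f(x_\star)$. Lemma~\ref{lem:fixed-point} gives the fixed-point form of $(x_\star,u_\star)$ for every $\gamma$, so the comparison point can be re-expressed with the current stepsize. This lets me pair the algorithm's quantities with $(x_\star,u_\star)$ at each step.

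Next I would write the two basic inequalities. Strong monotonicity of $\partial g$ gives
\[
\langle u_{k+1}-u_\star,\,x_{k+1}-x_\star\rangle\ge\mu\|x_{k+1}-x_\star\|^2 .
\]
Monotonicity of $\partial f$ gives
\[
\langle w_k+u_\star,\,y_k-x_\star\rangle\ge 0 .
\]
I would substitute the update identities into these:
\[
y_k+\gamma_k u_k = x_{k+1}+\gamma_k u_{k+1}
\quad\text{and}\quad
x_k-\gamma_k u_k = y_k+\gamma_k w_k .
\]
The intended candidate potential is
\[
\Phi_k=\frac{1}{\gamma_k^2}\|x_k-x_\star\|^2+\|u_k-u_\star\|^2,
\]
possibly with $\|x_k-x_\star\|$ replaced by the shadow-point distance $\|y_{k-1}+\gamma_{k-1}u_{k-1}-(x_\star+\gamma_{k-1}u_\star)\|$ if that closes more cleanly. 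Expanding squares via the polarization identity, adding the two inequalities multiplied by $2/\gamma_k$, and discarding the nonnegative term $\|\gamma_k(u_{k+1}-u_k)\|^2$-type residual should give
\[
\Phi_{k+1}\le \Phi_k
\]
precisely when the coefficient of $\|x_{k+1}-x_\star\|^2$ produced by strong monotonicity, namely $2\mu/\gamma_k$, compensates the growth from $1/\gamma_k^2$ to $1/\gamma_{k+1}^2$.

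That compensation is exactly the stepsize rule: $\gamma_{k+1}=\gamma_k/\sqrt{1+2\gamma_k\mu}$ is equivalent to
\[
\frac{1}{\gamma_{k+1}^2}=\frac{1}{\gamma_k^2}+\frac{2\mu}{\gamma_k}.
\]
Telescoping then gives $\|x_N-x_\star\|^2\le\gamma_N^2\,\Phi_0$, which is a fixed quantity times $\gamma_N^2$. It remains to show $\gamma_N=\mathcal{O}(1/N)$. Writing $s_k=1/\gamma_k$, we have $s_{k+1}^2=s_k^2+2\mu s_k\ge(s_k+\mu/2)^2$ once $s_k\ge\mu/4$ or so. Hence $s_k$ grows at least linearly, $s_N\gtrsim s_0+\mu N/2$, and therefore $\|x_N-x_\star\|^2=\mathcal{O}(1/(\mu^2N^2))$.

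The main obstacle is the algebra in the middle step. One must choose the right potential, including which point, $x_k$, $y_k$ or the shadow point, is weighted by $1/\gamma_k^2$, and the right multipliers on the two monotonicity inequalities so that all cross terms cancel and only nonnegative squares are dropped. I would first try the shadow-point/dual pair weighted by $1/\gamma^2$ and $1$ as above, and check the cancellation symbolically. If it does not close exactly, I would allow a weight $1/\gamma_k$ on the dual term and re-tune the multipliers.
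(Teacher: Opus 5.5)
Your proposal is correct and is essentially the argument the paper relies on (Davis--Yin's Proposition A.1 as recalled in Appendix~\ref{apx:leading_constants}): dividing $(1+2\gamma_k\mu)\|x_{k+1}-x_\star\|^2+\gamma_k^2\|u_{k+1}-u_\star\|^2\le\|x_k-x_\star\|^2+\gamma_k^2\|u_k-u_\star\|^2$ by $\gamma_k^2$ and using $1/\gamma_{k+1}^2=(1+2\gamma_k\mu)/\gamma_k^2$ gives exactly your $\Phi_{k+1}\le\Phi_k$, and $\gamma_N\sim 1/(\mu N)$ then yields the rate. One small correction: your first candidate potential already closes, and the nonnegative term you discard is $\|x_k-y_k\|^2=\gamma_k^2\|w_k+u_k\|^2$, not $\|\gamma_k(u_{k+1}-u_k)\|^2$.
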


Conceptually, our method in Section~\ref{s:FDR} can be viewed as a Chambolle--Pock--Davis--Yin-type acceleration, but further refined to achieve the optimal constant in the leading order term.

\subsection{Prior works}
Several prior works have sharpened the convergence theory for Douglas--Rachford splitting \cite{douglas1956numerical,lions1979splitting} and its variants under additional regularity assumptions on the functions in \eqref{eq:problem} or operators in \eqref{eq:mi}. Giselsson and Boyd \cite{giselsson2016linear} prove a tight global linear convergence of Douglas--Rachford splitting when one of the functions is both strongly convex and smooth. Davis and Yin \cite{davis2017faster} establish improved convergence rates for Douglas--Rachford splitting and ADMM under several regularity assumptions, including strong convexity, smoothness, and linear regularity. Giselsson \cite{giselsson2017tight} proves a tight global linear convergence rate for Douglas--Rachford splitting applied to Problem \eqref{eq:mi} assuming one operator is strongly monotone and cocoercive and when one operator is strongly monotone and the other is cocoercive. Ryu et al. \cite{ryu2020operator} develop a performance-estimation framework for operator splitting methods and use it to derive tight contraction factors for Douglas--Rachford splitting in monotone inclusion problems like Problem \eqref{eq:mi} under assumptions such as strong monotonicity, cocoercivity, and Lipschitz continuity. Patrinos et al. \cite{patrinos2014douglas} analyze DRS through the Douglas--Rachford envelope and derive accelerated variants in that framework, although their assumptions and performance measures differ from ours. ADMM is an algorithm closely related to DRS, and its linear convergence was shown by {Fran{\c{c}}a} and Bento \cite{francca2016explicit} and Deng and Yin \cite{deng2016global}. More recently, {Brice\~no-Arias} and {Rold{\'a}n} \cite{briceno2026optimal} proposed a modified Peaceman--Rachford splitting method, a closely related variant of Douglas--Rachford splitting \cite{eckstein1992douglas}, for problems in which the two functions have combinations of strong convexity and smoothness. The proposed method achieves an improved linear convergence rate relative to previously known bounds in that setting. 

If one term in \eqref{eq:problem} is additionally smooth, the problem can be treated by forward--backward splitting \cite{bruck1977weak,passty1979ergodic} and accelerated variants such as FISTA \cite{beck2009fast}. {Brice\~no-Arias} \cite{briceno2025lyapunov} studies FISTA for this smooth composite setting when both smooth and nonsmooth terms may contribute strong convexity and shows that the sharpest theoretical rate is obtained by leveraging all available strong convexity through the smooth term.

These works differ from the present paper in that they rely on smoothness or other regularity assumptions to obtain linear convergence rates. In contrast, we consider the nonsmooth proximable setting where one function is strongly convex, establish an accelerated $O(1/N^2)$ rate for the squared distance to the solution, and prove that this rate and its leading constant are optimal.

\section{Fast Douglas--Rachford Splitting}
\label{s:FDR}
We now present Fast Douglas--Rachford Splitting (FDR):
\begin{equation}
\begin{aligned} 
& w_0 = x_0 - \eta_0 u_0 \\
& y_{k+1}=\prox_{\eta_k g}\left(2x_k - w_k\right)\\
& w_{k+1}=\left(1 + \frac{\eta_{k+1}}{\eta_k}\right)y_{k+1} - \left(\frac{\eta_{k+1}}
{\eta_k}\right)(2x_{k} - w_k) \\
& x_{k+1}=\prox_{\eta_{k+1} f}\left(w_{k+1}\right)\\
\end{aligned}
\tag{\textup{FDR}}\label{eq:FDR_Alg}
\end{equation}
for $k=0,\dots,N-1$, where $N\in \mathbb{N}$ is a pre-specified total iteration count, $x_{0}, u_0\in \mathbb{R}^d$ are respectively primal and dual starting points, and the proximal stepsizes are given by
\[
 \eta_{k}=\frac{2N\mu}{1 + 4kN\mu^2}\qquad\text{for }k=0,\ldots,N.
\]

In Section~\ref{ss:conv-proof}, we establish the following convergence guarantee for FDR.
\begin{theorem}
\label{thm:FDR-rate}
Let $N\ge1$ and let $f\colon \mathbb{R}^{d}\to\mathbb{R}\cup\{\infty\}$ be closed, convex, and proper, and $g\colon\mathbb{R}^{d}\to\mathbb{R}\cup\{\infty\}$ be closed, proper, and $\mu$-strongly convex with $\mu > 0$.  Assume that a primal solution $x_\star\in \argmin(f+g)$ and a dual solution $u_\star$ satisfying $u_\star \in \partial g(x_\star)$, $-u_\star \in \partial f(x_\star)$ exist.
Then, the iterates generated by \ref{eq:FDR_Alg} satisfy the rate
\[
\|x_N-x_\star\|^2\leq\frac{\|x_0-x_\star\|^2 + \|u_0 - u_\star\|^2}{1+4N^2\mu^2}.
\]
\end{theorem}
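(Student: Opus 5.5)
The plan is a Lyapunov (potential-function) argument. First I rewrite \ref{eq:FDR_Alg} in primal--dual form, then show that a weighted sum of primal and dual distances is nonincreasing, and finally read off the rate at $k=N$.

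\textbf{Step 1: primal--dual form.} Define $u_k\in-\partial f(x_k)$ for $k\ge1$ through the prox optimality condition $w_k=x_k-\eta_k u_k$. For $k=0$, $w_0=x_0-\eta_0u_0$ holds by definition, so the same identity is valid there even though $u_0$ need not be a subgradient. Then $2x_k-w_k=x_k+\eta_k u_k$. The $g$-step gives $v_{k+1}\in\partial g(y_{k+1})$ with
\[
x_k+\eta_k u_k = y_{k+1}+\eta_k v_{k+1}.
\]
Substituting this into the $w$-update gives
\[
w_{k+1}=y_{k+1}+\tfrac{\eta_{k+1}}{\eta_k}\bigl(y_{k+1}-(2x_k-w_k)\bigr)=y_{k+1}-\eta_{k+1}v_{k+1}.
\]
Combined with the $f$-step, this yields
\[
x_{k+1}-\eta_{k+1}u_{k+1}=y_{k+1}-\eta_{k+1}v_{k+1}.
\]
So each iteration is a pair of linear relations linking $(x_k,u_k)$, $(y_{k+1},v_{k+1})$ and $(x_{k+1},u_{k+1})$. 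The stepsizes satisfy the key arithmetic fact
\[
\frac1{\eta_{k+1}}-\frac1{\eta_k}=2\mu .
\]
This increment is exactly what strong convexity of $g$ should pay for.

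\textbf{Step 2: one-step inequality.} Let $(x_\star,u_\star)$ be as in Lemma~\ref{lem:fixed-point}. I use two inequalities:
\[
\langle y_{k+1}-x_\star,\,v_{k+1}-u_\star\rangle\ge\mu\|y_{k+1}-x_\star\|^2 ,
\qquad
\langle x_{k+1}-x_\star,\,u_\star-u_{k+1}\rangle\ge0 .
\]
The first is strong monotonicity of $\partial g$. The second is monotonicity of $\partial f$ at $x_{k+1}$; it is used only for $k+1\ge1$, which is why $u_0$ may be arbitrary. I seek a potential
\[
V_k=A_k\|x_k-x_\star\|^2+B_k\|u_k-u_\star\|^2
\]
with $A_k=1/\eta_k$, possibly rescaled by a constant, and $B_k$ to be determined. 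The goal is the identity
\[
V_k-V_{k+1}=c_k\cdot(\text{the two monotonicity gaps})+(\text{a nonnegative square}).
\]
To verify it, I use the two linear relations to express $y_{k+1}-x_\star$ and $v_{k+1}-u_\star$ in terms of the other quantities. Subtracting the fixed-point relations of Lemma~\ref{lem:fixed-point} in shifted form, $x_\star\pm\eta u_\star=x_\star\pm\eta u_\star$, handles the constant terms. Then I expand all squares and match coefficients. The term $\mu\|y_{k+1}-x_\star\|^2$ should convert into the $2\mu$ increase of $A_k$, which lets $A_{k+1}=A_k+2\mu$ while the cross terms cancel. Matching the dual weights should force $B_k$ to scale like $\eta_k$ times a constant; I will fix that constant by requiring $V_0\propto\|x_0-x_\star\|^2+\|u_0-u_\star\|^2$.

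\textbf{Step 3: conclusion.} By telescoping, $A_N\|x_N-x_\star\|^2\le V_N\le V_0$. The identity
\[
\frac{\eta_N}{\eta_0}=\frac1{1+4N^2\mu^2}
\]
is then what should produce the stated bound.

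\textbf{Main obstacle.} The hard part is finding the exact weights $B_k$, and possibly a cross term $\langle x_k-x_\star,u_k-u_\star\rangle$ in $V_k$, such that the residual after subtracting the monotonicity gaps is a perfect square. It must also give equal weight to the primal and dual terms at $k=0$. The stepsize $\eta_0=2N\mu$ is not scale-invariant, so the weights must depend on $N$ and $\mu$. If a diagonal $V_k$ fails, I would allow a cross term and solve the resulting small linear system for the coefficients (a PEP-style certificate).
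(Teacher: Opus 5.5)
\textbf{There is a genuine gap.} The Lyapunov template is the right one, and your Step~1 is correct: with $u_k:=(x_k-w_k)/\eta_k$ one has $2x_k-w_k=x_k+\eta_ku_k$ and $1/\eta_{k+1}-1/\eta_k=2\mu$. But the only nontrivial part of the proof, the potential and its one-step identity, is left open. Worse, the ansatz you do commit to is false: a coefficient $A_k\propto1/\eta_k$ on $\|x_k-x_\star\|^2$, growing by $2\mu$ per step, with $V_0\le A_0R^2$ so that Step~3 outputs $\eta_N/\eta_0$.

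A one-dimensional run shows this. Take $d=1$, $f\equiv0$, $g(x)=\frac{\mu}{2}x^2$, so $x_\star=u_\star=0$. Set $x_0=s/(1+4N^2\mu^2)$ and $u_0=2N\mu s/(1+4N^2\mu^2)$, so that $x_0+\eta_0u_0=s$ and $R^2=s^2/(1+4N^2\mu^2)$. Write $\Delta_k:=1+4kN\mu^2=2N\mu/\eta_k$. Then FDR gives $w_k=x_k=s/\Delta_k$, i.e.\ $u_k=0$, for all $k\ge1$. Consequently:
\begin{itemize}
\item $\|x_N-x_\star\|^2=\nu R^2$, so the bound is attained and every link of a chain proving it must be tight on this run.
\item Since $u_k=u_\star$ on this run, every potential reduces to $A_k\|x_k-x_\star\|^2$, and tightness forces $A_k\propto\Delta_k^2\propto1/\eta_k^2$ for $1\le k\le N$.
\item Concretely, your chain would give $\|x_1-x_\star\|^2\le(\eta_1/\eta_0)R^2=R^2/\Delta_1$, whatever $B_1$ or a cross term is. In fact $\|x_1-x_\star\|^2=\Delta_NR^2/\Delta_1^2$, which is larger for every $N\ge2$.
\end{itemize}
So the strong-convexity term must pay for $\Delta_{k+1}^2-\Delta_k^2=4N\mu^2(\Delta_k+\Delta_{k+1})$, not for a constant increment $2\mu$. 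The dual weight comes out constant ($4N^2\mu^2\nu^2$), not proportional to $\eta_k$. The rate is $(\eta_N/\eta_0)^2$ times the Cauchy--Schwarz factor $1+\eta_0^2$, which only coincidentally equals $\eta_0/\eta_N$.

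\textbf{No diagonal $V_k$ works with only your two inequalities.} Consider any diagonal $V_k$ ($1\le k\le N-1$, $A_k,B_k>0$) and only the two inequalities you list. The update sees $(x_k,u_k)$ only through $s_k:=x_k-x_\star+\eta_k(u_k-u_\star)$, and neither inequality involves $(x_k,u_k)$ otherwise. Hence any certificate for $V_{k+1}\le V_k$ stays valid after moving $(x_k,u_k)$ to the minimizer of $V_k$ on $\{s_k\ \text{fixed}\}$. There $V_k=\kappa_k\|s_k\|^2$ with $\kappa_k=A_kB_k/(A_k\eta_k^2+B_k)<A_k$. On the run above, $V_k=A_k\|s_k\|^2$, so that link is strictly lossy, which contradicts tightness.

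\textbf{What the paper uses.} The paper's proof takes exactly the rank-one potential $\mathcal{V}_k=\nu^2\Delta_k^2\|s_k\|^2$ for $0\le k\le N-1$, which is your cross-term fallback. It starts from $\mathcal{V}_0\le\nu R^2$ by Cauchy--Schwarz and ends with $\mathcal{V}_N=\|x_N-x_\star\|^2+4N^2\mu^2\nu^2\|u_N-u_\star\|^2$. For $k\le N-2$, $\mathcal{V}_{k+1}-\mathcal{V}_k$ equals exactly $-8N\mu\nu^2\Delta_{k+1}\langle x_{k+1}-x_\star,u_\star-u_{k+1}\rangle-4N\mu\nu^2(\Delta_k+\Delta_{k+1})\bigl(\langle y_{k+1}-x_\star,v_{k+1}-u_\star\rangle-\mu\|y_{k+1}-x_\star\|^2\bigr)$; the last step has modified coefficients.

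A diagonal alternative also works: $\nu^2(\Delta_k^2\|x_k-x_\star\|^2+4N^2\mu^2\|u_k-u_\star\|^2)$ for $k\ge1$, which coincides with $\mathcal{V}_N$ at $k=N$. It requires that the link $k\to k+1$ additionally use monotonicity of $\partial f$ at $x_k$.
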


\paragraph{Discussion.}
For FDR, the total number of iterations $N$ must be specified \textit{a priori}, as the proximal step-size sequence $\{\eta_k\}_{k=0}^N$ explicitly depends on $N$. Additionally, Theorem~\ref{thm:FDR-rate} provides a guarantee on the final iterate, $x_N$. Both properties are shared by exact optimal methods such as OGM and OptISTA. 

We observe that if the step-sizes are constant ($\eta_0 = \eta_1 = \cdots = \eta_N$), our algorithm reduces to Peaceman--Rachford splitting up to our choice of initialization \cite{peaceman1955numerical}.

Moreover, as $N, k \to \infty$ the ratio $\eta_{k+1}/\eta_{k} \to 1$, so the later FDR iterations resemble Peaceman--Rachford splitting with varying, decreasing proximal stepsizes.

Theorem~\ref{thm:FDR-rate} shows that FDR improves the leading asymptotic constant of the accelerated Chambolle--Pock and Davis--Yin methods by a factor of four. Details are provided in Appendix \ref{apx:leading_constants}.

\subsection{Proof of Theorem~\ref{thm:FDR-rate}}
\label{ss:conv-proof}
We establish the convergence rate of FDR with a Lyapunov analysis, which is a widely used proof template in the analysis of first-order methods \cite{jang2025computer,kim2016optimized,park2023factor,park2022exact}.
For notational simplicity, define
\[
R^2=\|x_0-x_\star\|^2 + \|u_0 - u_\star\|^2.
\]

% To analyze FDR we choose the following Lyapunov sequence $\mc{V}_k$ with indices $k = -1, 0, \ldots, N$.

Specifically, define
\begin{align*}
\mathcal{V}_{-1}&=\nu R^2 - \nu^2\|-2N\mu(x_0-x_\star) + (u_0-u_\star)\|^2\\
\mathcal{V}_0&=\nu^2\|x_\star+2N\mu u_\star-x_0-2N\mu u_0\|^2
\\
\mathcal{V}_k&=\nu^2\|(1 + 4kN\mu^2)x_\star+2N\mu u_\star-(1 + 4kN\mu^2)(2x_k-w_k)\|^2 \quad \forall k = 1, \ldots, N-1
\\
\mathcal{V}_N&=\|x_N-x_\star\|^2+4N^2\mu^2\nu^2\|u_\star + f'(x_N)\|^2,
\end{align*}
where $x_k$ and $w_k$ are generated by \eqref{eq:FDR_Alg}, $f'(x_N) = (w_N-x_N) / \eta_N$, and
\[\nu = \frac{1}{1 + 4N^2 \mu^2}.\]
Once we establish that the Lyapunov sequence is dissipative, i.e., that
\[
\mc{V}_N \le \mc{V}_{N-1} \le \cdots \le \mc{V}_1 \le \mc{V}_0 \le \mc{V}_{-1},
\]
it follows immediately that
\[
\|x_N - x_\star\|^2 \le \mc{V}_N \le \cdots \le \mc{V}_{-1} \le\nu R^2 = \frac{\|x_0-x_\star\|^2 + \|u_0 - u_\star\|^2}{1 + 4N^2\mu^2},
\]
which implies the stated rate
\[
\|x_N - x_\star\|^2 \le \frac{\|x_0-x_\star\|^2 + \|u_0 - u_\star\|^2}{1 + 4N^2\mu^2}.
\]

It remains to show that the Lyapunov sequence is non-increasing.
% Note that for $N=1$, the inequality $\mc{V}_1 \le \mc{V}_0$ is shown by Case 4 and not Case 2.

\underline{Case 1}:\;$\mc{V}_0\le\mc{V}_{-1}$
\begin{align*}
    \mc{V}_0-\mc{V}_{-1} &= \nu^2\|x_\star+2N\mu u_\star-x_0-2N\mu u_0\|^2 - \nu R^2 + \nu^2\|-2N\mu(x_0-x_\star) + (u_0-u_\star)\|^2 \\
    &= \nu^2\|x_\star-x_0+2N\mu (u_\star-u_0)\|^2 + \nu^2\|2N\mu(x_\star-x_0) + (u_0-u_\star)\|^2 - \nu R^2 \\
    &= \nu^2\left[(1+4N^2\mu^2)\|x_0-x_\star\|^2+(1+4N^2\mu^2)\|u_0-u_\star\|^2\right] -\nu R^2 \\
    &= \nu (\|x_0-x_\star\|^2+\|u_0-u_\star\|^2-R^2) \\
    &= 0 
\end{align*}

\underline{Case 2}:\;$\mc{V}_1\le\mc{V}_{0}$ for $N \ge 2$

First, we will give some formulas and definitions.
\begin{subequations}
\begin{align}
    \|a\|^2-\|b\|^2 &= \langle a+b,a-b\rangle \label{eq:polarization-identity}  \\
    \Omega &= 1+4N\mu^2 \\
    \Omega \eta_{1} &= \eta_0 \label{eq:eta1-2} \\
    g'(y_1) &= \frac{ x_0 + \eta_0 u_0-y_1}{\eta_0} \label{eq:y1} \\
    f'(x_{1}) &= \frac{ w_1 - x_1}{\eta_1} \label{eq:x1} \\
    f'(x_{1})+g'(y_1) &= \frac{(y_1-x_1)\Omega}{\eta_0} \label{eq:fp-gp-x1y1} \\
    2\eta_0(1+\Omega)\mu &= \Omega^2-1
\end{align}
\end{subequations}

\begingroup
\allowdisplaybreaks
\begin{align*}
    \mc{V}_1-\mc{V}_{0} &= \nu^2[\|\Omega x_\star + 2N\mu u_\star-\Omega(2x_1-w_1)\|^2 - \|x_\star + 2N\mu u_\star - x_0 - 2N\mu u_0\|^2] \\
    &= \nu^2[\|\Omega x_\star+\eta_0 u_\star-\Omega(x_1-\eta_1 f'(x_1))\|^2 - \\ &\|x_\star + \eta_0 u_\star - (y_1 + \eta_0 g'(y_1))\|^2] {\color{gray}\qquad\rhd\,\textrm{using } \eqref{eq:y1} \textrm{ and } \eqref{eq:x1}} \\
    &= \nu^2[\langle\Omega x_\star + \eta_0u_\star-\Omega x_1 + \Omega \eta_1 f'(x_1)+x_\star+\eta_0 u_\star - y_1 - \eta_0g'(y_1) ,\\
    & \Omega x_\star + \eta_0u_\star-\Omega x_1 + \Omega \eta_1 f'(x_1)-x_\star-\eta_0 u_\star + y_1 + \eta_0g'(y_1) \rangle] {\color{gray}\qquad\rhd\,\textrm{using } \eqref{eq:polarization-identity}} \\
    &= \nu^2[\langle (1+\Omega)x_\star - y_1 - \Omega x_1 + 2\eta_0 u_\star + \Omega \eta_1 f'(x_1) - \eta_0 g'(y_1), \\
    & (-1+\Omega)x_\star + y_1 - \Omega x_1 + \Omega \eta_1 f'(x_1) + \eta_0 g'(y_1) \rangle ] \\
    &= \nu^2[\langle (1+\Omega)x_\star - y_1 - \Omega x_1 + 2\eta_0 u_\star + \eta_0(f'(x_1)-g'(y_1)), \\
    & (-1+\Omega)x_\star + y_1 - \Omega x_1 + (y_1-x_1)\Omega \rangle ] {\color{gray}\qquad\rhd\,\textrm{using } \eqref{eq:eta1-2} \textrm{ and } \eqref{eq:fp-gp-x1y1}} \\
    &= \nu^2[\langle (1+\Omega)x_\star - y_1 - \Omega x_1 + 2\eta_0 u_\star + \eta_0(f'(x_1)-g'(y_1)), (1+\Omega)(y_1-x_\star) + 2 \Omega (x_\star-x_1) \rangle ] \\
    &= \nu^2[ \langle (1+\Omega)(y_1-x_\star), (1+\Omega)x_\star - y_1 - \Omega x_1 + 2\eta_0 u_\star + \eta_0(f'(x_1)-g'(y_1)) \rangle + \\
    & \langle 2 \Omega (x_\star-x_1), (1+\Omega)x_\star - y_1 - \Omega x_1 + 2\eta_0 u_\star + \eta_0(f'(x_1)-g'(y_1)) \rangle] \\
    &= \nu^2[ \langle (1+\Omega)(y_1-x_\star), (1+\Omega)x_\star - y_1 - \Omega x_1 + 2\eta_0 u_\star + (y_1-x_1)\Omega - 2\eta_0 g'(y_1) \rangle + \\
    & \langle 2 \Omega (x_\star-x_1), (1+\Omega)x_\star - y_1 - \Omega x_1 + 2\eta_0 u_\star + 2\eta_0 f'(x_1) - (y_1-x_1)\Omega \rangle] {\color{gray}\qquad\rhd\,\textrm{using } \eqref{eq:fp-gp-x1y1}}\\
    &= \nu^2[ \langle y_1-x_\star, (1+\Omega)^2x_\star + (\Omega^2-1)y_1 - 2\Omega(1+\Omega) x_1 + 2\eta_0 (1+\Omega) (u_\star- g'(y_1)) \rangle + \\
    & \langle x_\star-x_1, 2\Omega(1+\Omega)x_\star -2\Omega (1+\Omega)y_1 + 4\eta_0 \Omega(u_\star + f'(x_1)) \rangle] \\
    &= \nu^2[ \langle y_1-x_\star, (1+\Omega)^2x_\star + (\Omega^2-1)y_1 - 2\Omega(1+\Omega) x_\star + 2\eta_0 (1+\Omega) (u_\star- g'(y_1)) \rangle + \\
    & \langle x_\star-x_1, 4\eta_0 \Omega(u_\star + f'(x_1)) \rangle] \\
    &= \nu^2[ \langle y_1-x_\star, (\Omega^2-1)(y_1 - x_\star) + 2\eta_0 (1+\Omega) (u_\star- g'(y_1)) \rangle -4\eta_0 \Omega \langle x_\star-x_1, -u_\star - f'(x_1) \rangle] \\
    &= \nu^2\{ 2\eta_0 (1+\Omega)[-\langle y_1-x_\star, g'(y_1) - u_\star \rangle + \mu \|y_1-x_\star\|^2] -4\eta_0 \Omega \langle x_\star-x_1, -u_\star - f'(x_1) \rangle\} \\
    &\leq 0,
\end{align*}
\endgroup
where the final inequality holds by monotonicity of $\partial f$, $\mu$-strong monotonicity of $\partial g$, since $-u_\star \in \partial f(x_\star)$, and because $2\eta_0 (1+\Omega)\ge 0$ and $4\eta_0 \Omega\ge 0$.

\underline{Case 3}:\;$\mc{V}_{k+1}\le\mc{V}_{k} \quad \forall k = 1, \ldots, N-2$

First we will give some important formulas and definitions.

\begin{subequations}
\begin{align}
    g'(y_{k+1}) &= \frac{2x_{k}-w_k-y_{k+1}}{\eta_k} \label{eq:ykp1} \\
    f'(x_{k+1}) &= \frac{w_{k+1}-x_{k+1}}{\eta_{k+1}} \label{eq:xkp1} \\
    \Delta_{k} &= 1 + 4kN\mu^2 \\
    \Delta_{k+1} &= 1 + 4(k+1)N\mu^2 \\
    \eta_{k}\Delta_{k} &= \eta_{k+1}\Delta_{k+1} = 2N\mu \label{eq:delta-eta-identity} \\
    f'(x_{k+1}) + g'(y_{k+1}) &= \frac{(y_{k+1}-x_{k+1})\Delta_{k+1}}{2N \mu} \label{eq:fp-gp-identity}
\end{align}
\end{subequations}

\begingroup
\allowdisplaybreaks
\begin{align*}
     \mc{V}_{k+1}-\mc{V}_{k} &= \nu^2 [\|\Delta_{k+1}x_\star + 2N\mu u_\star - \Delta_{k+1}(2x_{k+1}-w_{k+1})\|^2 \\ 
     &- \|\Delta_{k}x_\star + 2N\mu u_\star - \Delta_{k}(2x_{k}-w_k)\|^2] \\
    &= \nu^2 [\|\Delta_{k+1}x_\star + 2N\mu u_\star - \Delta_{k+1}(x_{k+1}-\eta_{k+1}f'(x_{k+1}))\|^2 \\ 
     &- \|\Delta_{k}x_\star + 2N\mu u_\star - \Delta_{k}(y_{k+1} + \eta_{k}g'(y_{k+1}))\|^2] {\color{gray}\qquad\rhd\,\textrm{from } \eqref{eq:ykp1} \textrm{ and } \eqref{eq:xkp1}} \\
    &= \nu^2 [\langle (\Delta_{k+1}+\Delta_k)x_\star+4N \mu u_\star - \Delta_{k+1}(x_{k+1} - \eta_{k+1}f'(x_{k+1})) - \Delta_k(y_{k+1} + \eta_k g'(y_{k+1})), \\
    &(\Delta_{k+1}-\Delta_k)x_\star - \Delta_{k+1}(x_{k+1}-\eta_{k+1}f'(x_{k+1}))+\Delta_k(y_{k+1}+\eta_kg'(y_{k+1}))\rangle] {\color{gray}\qquad\rhd\,\textrm{from } \eqref{eq:polarization-identity}} \\
    &= \nu^2 [\langle \Delta_{k+1}(x_\star-x_{k+1})+\Delta_k(x_\star -y_{k+1}) + 4N\mu u_\star+\Delta_{k+1} \eta_{k+1}f'(x_{k+1})-\Delta_k \eta_k g'(y_{k+1}),\\
    & \Delta_{k+1}(x_\star-x_{k+1})-\Delta_k(x_\star -y_{k+1}) +\Delta_{k+1} \eta_{k+1}f'(x_{k+1})+\Delta_k \eta_k g'(y_{k+1}) \rangle] \\
    &= \nu^2[\Delta_{k+1}^2\|x_\star-x_{k+1}\|^2 - \Delta_k^2\|x_\star - y_{k+1}\|^2 + 2\Delta_{k+1}^2 \langle x_\star - x_{k+1}, \eta_{k+1}f'(x_{k+1})\rangle \\
    &+2\Delta_k^2 \langle x_\star - y_{k+1}, \eta_k g'(y_{k+1})\rangle + \Delta_{k+1}^2\eta_{k+1}^2 \|f'(x_{k+1})\|^2 - \Delta_k^2 \eta_k ^2 \|g'(y_{k+1})\|^2 \\
    &+ \langle 4N\mu u_\star,\Delta_{k+1}(x_\star-x_{k+1})-\Delta_k(x_\star -y_{k+1}) +\Delta_{k+1} \eta_{k+1}f'(x_{k+1})+\Delta_k \eta_k g'(y_{k+1})\rangle] \\
    &= \nu^2 [\langle x_\star - x_{k+1}, 2\Delta_{k+1}^2\eta_{k+1}f'(x_{k+1})+4\Delta_{k+1}N\mu u_\star + \Delta_{k+1}^2(x_\star - x_{k+1})\rangle \\
    &+ \langle x_\star - y_{k+1}, 2\Delta_k^2 \eta_k g'(y_{k+1})-4\Delta_k N\mu u_\star - \Delta_k^2 (x_\star - y_{k+1})\rangle \\
    &+4N^2\mu^2\langle 2u_\star + f'(x_{k+1}) - g'(y_{k+1}), f'(x_{k+1}) + g'(y_{k+1}) \rangle ] {\color{gray}\qquad\rhd\,\textrm{from } \eqref{eq:delta-eta-identity}} \\
    &= \nu^2 [\langle x_\star - x_{k+1}, 2\Delta_{k+1}^2\eta_{k+1}f'(x_{k+1})+4\Delta_{k+1}N\mu u_\star + \Delta_{k+1}^2(x_\star - x_{k+1})\rangle \\
    &+ \langle x_\star - y_{k+1}, 2\Delta_k^2 \eta_k g'(y_{k+1})-4\Delta_k N\mu u_\star - \Delta_k^2 (x_\star - y_{k+1})\rangle \\
    &+2N\mu \Delta_{k+1}\langle 2u_\star + f'(x_{k+1}) - g'(y_{k+1}), y_{k+1} - x_{k+1} \rangle ] {\color{gray}\qquad\rhd\,\textrm{from } \eqref{eq:fp-gp-identity}} \\
    &= \nu^2 [\langle x_\star - x_{k+1}, 6N\Delta_{k+1}\mu f'(x_{k+1})+8N\Delta_{k+1}\mu u_\star - 2N\Delta_{k+1}\mu g'(y_{k+1}) + \\
    &\Delta_{k+1}^2(x_\star - x_{k+1})\rangle + \langle x_\star - y_{k+1}, 2N\mu(2\Delta_k + \Delta_{k+1}) g'(y_{k+1})-4N\mu(\Delta_k + \Delta_{k+1}) u_\star \\
    &- 2N\mu \Delta_{k+1}f'(x_{k+1}) - \Delta_k^2 (x_\star - y_{k+1})\rangle] {\color{gray}\qquad\rhd\,\textrm{from } \eqref{eq:delta-eta-identity}} \\
    &= \nu^2 [\langle x_\star - x_{k+1}, 8N\Delta_{k+1}\mu (f'(x_{k+1})+ u_\star)+\Delta_{k+1}^2(x_\star - y_{k+1})\rangle \\
    &+ \langle x_\star - y_{k+1}, 4N\mu(\Delta_k + \Delta_{k+1}) g'(y_{k+1})-4N\mu(\Delta_k + \Delta_{k+1}) u_\star + (x_{k+1}-y_{k+1})\Delta_{k+1}^2 \\
    &  - \Delta_k^2 (x_\star - y_{k+1})\rangle] {\color{gray}\qquad\rhd\,\textrm{from } \eqref{eq:fp-gp-identity}} \\
    &= \nu^2 [-8N\Delta_{k+1}\mu\langle x_{k+1} - x_\star, f'(x_{k+1}) + u_\star\rangle \\
    &+ \langle x_\star - y_{k+1}, 4N\mu(\Delta_k + \Delta_{k+1}) (g'(y_{k+1}) - u_\star) +  (\Delta_{k+1}^2-\Delta_k^2) (x_\star - y_{k+1})\rangle] \\
    &= \nu^2 \{-8N\Delta_{k+1}\mu\langle x_{k+1} - x_\star, f'(x_{k+1}) + u_\star\rangle \\
    &+ 4N\mu(\Delta_k + \Delta_{k+1}) [-\langle x_\star - y_{k+1},  u_\star - g'(y_{k+1}) \rangle + \mu \|x_\star - y_{k+1}\|^2]\} \\
    & \le 0
\end{align*}
\endgroup
where the final inequality holds by monotonicity of $\partial f$, $\mu$-strong monotonicity of $\partial g$, since $-u_\star \in \partial f(x_\star)$, and because the coefficients $8N\Delta_{k+1} \mu$ and $4N \mu (\Delta_k + \Delta_{k+1})$ are non-negative.

\underline{Case 4}:\;$\mc{V}_{N}\le\mc{V}_{N-1}$
(including the case $\mc{V}_{1}\le\mc{V}_{0}$ when $N=1$)

First, we will give some formulas and definitions.

\begin{subequations}
\begin{align}
    g'(y_{N}) &= \frac{2x_{N-1}-w_{N-1}-y_{N}}{\eta_{N-1}} \label{eq:yN} \\
    f'(x_{N}) &= \frac{w_{N}-x_{N}}{\eta_{N}} \label{eq:xN} \\
    \Delta_{N-1} &= 1+4(N-1)N\mu^2 \\
    \Delta_{N} &= 1+4N^2\mu^2 \\
    \Delta_{N-1}\eta_{N-1} &= \Delta_{N}\eta_{N} = 2N\mu \label{eq:delta-eta} \\
    f'(x_{N}) + g'(y_{N}) &= \frac{(y_{N}-x_{N})\Delta_{N}}{2N \mu} \label{eq:fpN-gpN-identity}
\end{align}
\end{subequations}

\begingroup
\allowdisplaybreaks
\begin{align*}
     \mc{V}_N-\mc{V}_{N-1} &= \|x_N-x_\star\|^2+4N^2\mu^2\nu^2\|u_\star + f'(x_N)\|^2  - \nu^2\|\Delta_{N-1} x_\star+2N\mu u_\star-\Delta_{N-1}(2x_{N-1}-w_{N-1})\|^2 \\
&= \|x_N-x_\star\|^2+4N^2\mu^2\nu^2\|u_\star + f'(x_N)\|^2  - \nu^2\|\Delta_{N-1} x_\star+2N\mu u_\star-\Delta_{N-1}(y_N+\eta_{N-1}g'(y_N))\|^2 \\
&= \|x_N-x_\star\|^2+4N^2\mu^2\nu^2\|u_\star + f'(x_N)\|^2  - \nu^2\|\Delta_{N-1} (x_\star-y_N)+2N\mu (u_\star-g'(y_N))\|^2 {\color{gray}\qquad\rhd\,\textrm{from } \eqref{eq:delta-eta}} \\
&= \|x_N-x_\star\|^2+\nu^2 \langle 2N\mu(u_\star+f'(x_N)) + \Delta_{N-1}(x_\star - y_N) + 2N\mu(u_\star - g'(y_N)), \\
& 2N\mu(u_\star+f'(x_N)) - \Delta_{N-1}(x_\star - y_N) - 2N\mu(u_\star - g'(y_N)) \rangle {\color{gray}\qquad\rhd\,\textrm{from } \eqref{eq:polarization-identity}} \\
&= \|x_N-x_\star\|^2+\nu^2 \langle 2N\mu(u_\star+f'(x_N)) + \Delta_{N-1}(x_\star - y_N) + 2N\mu(u_\star - g'(y_N)), \\
& (y_N-x_N)\Delta_N-\Delta_{N-1}(x_\star-y_N) \rangle {\color{gray}\qquad\rhd\,\textrm{from } \eqref{eq:fpN-gpN-identity}} \\
&= \|x_N-x_\star\|^2+\nu^2 \langle 2N\mu(u_\star+f'(x_N)) + \Delta_{N-1}(x_\star - y_N) + 2N\mu(u_\star - g'(y_N)), \\
& (\Delta_{N-1}+\Delta_N)(y_N-x_\star) - \Delta_N(x_N-x_\star) \rangle {\color{gray}\qquad\rhd\,\textrm{add and subtract } \Delta_N x_\star} \\
&= \nu^2 \langle 2N\mu(u_\star+f'(x_N)) - \Delta_{N-1}(y_N - x_\star) + 2N\mu(u_\star - g'(y_N)), (\Delta_{N-1}+\Delta_N)(y_N-x_\star) \rangle + \\
& \nu^2 \langle -\Delta_N(x_N-x_\star) + 2N\mu(u_\star+f'(x_N)) - \Delta_{N-1}(y_N - x_\star) + 2N\mu(u_\star - g'(y_N)), -\Delta_N(x_N-x_\star) \rangle \\
&= \nu^2 \langle 2N\mu u_\star + (y_N-x_N)\Delta_N - 2N\mu g'(y_N) - \Delta_{N-1}(y_N - x_\star) + 2N\mu(u_\star - g'(y_N)), \\
& (\Delta_{N-1}+\Delta_N)(y_N-x_\star) \rangle + \nu^2 \langle -\Delta_N(x_N-x_\star) + 2N\mu(u_\star+f'(x_N)) - \Delta_{N-1}(y_N - x_\star) + \\
&2N\mu u_\star + 2N \mu f'(x_N) + (x_N-y_N)\Delta_N, -\Delta_N(x_N-x_\star) \rangle {\color{gray}\qquad\rhd\,\textrm{from } \eqref{eq:fpN-gpN-identity}} \\
&= \nu^2 \langle 4N\mu (u_\star - g'(y_N)) + (\Delta_N - \Delta_{N-1})(y_N-x_\star) + (x_\star - x_N)\Delta_N, (\Delta_{N-1}+\Delta_N)(y_N-x_\star) \rangle + \\
&\nu^2 \langle 4N\mu (u_\star + f'(x_N)) - (\Delta_N + \Delta_{N-1})(y_N-x_\star), -\Delta_N(x_N-x_\star) \rangle \\
&= \nu^2 \langle 4N\mu (u_\star - g'(y_N)) + (\Delta_N - \Delta_{N-1})(y_N-x_\star), (\Delta_{N-1}+\Delta_N)(y_N-x_\star) \rangle + \\
&\nu^2 \langle 4N\mu (u_\star + f'(x_N)), -\Delta_N(x_N-x_\star) \rangle \\
&= 8N\mu(1+2N(2N-1)\mu^2 )\nu^2(-\langle g'(y_N)-u_\star, y_N-x_\star \rangle + \mu \|y_N-x_\star\|^2) + \\
& 4N\mu \nu(-\langle f'(x_N)+u_\star, x_N-x_\star \rangle) \\
& \le 0
\end{align*}
\endgroup
where the final inequality holds by monotonicity of $\partial f$, $\mu$-strong monotonicity of $\partial g$, since $-u_\star \in \partial f(x_\star)$, and because $8N\mu(1+2N(2N-1)\mu^2 )\nu^2\ge 0$ and $4N\mu \nu\ge 0$.

\qed

\section{Matching lower bound}\label{sec:lower-bound}
In this section, we present a complexity lower bound that matches the $\mc{O}(1/N^2)$ rate and its leading constant of Theorem~\ref{thm:FDR-rate}. 

% This is achieved by constructing worst-case functions $g$ and $f$ for which no deterministic $N$-step prox--prox method can attain a faster decay of the distance to the solution.

We follow a standard approach: we first construct a pair of  worst-case functions for the class of methods satisfying the \emph{proximal span condition}, and then apply a resisting oracle technique to extend the lower bound to apply to all deterministic algorithms. 

\subsection{Proximal span condition}
Intuitively, for first-order methods, the span condition is defined by requiring that each iterate belongs to the span of the previous iterates and the associated first-order information. This notion has been formalized in various forms in prior works \cite{nemirovsky1991optimality,carmon2020lower,drori2017exact,ouyang2021lower,drori2022oracle,park2022exact,jang2025computer}.

In our particular setting, we consider composite problems accessed through proximal oracles. Before giving the formal definition in Section~\ref{sec:resisting}, we use the term \emph{deterministic $N$-step
prox-prox method} informally to refer to any deterministic algorithm that makes a total of $2N$ proximal oracle calls, each call being to either $f$ or $g$, and then outputs a point $x_N$. Since the method has access to two starting points, namely $x_0$ and $u_0$, the corresponding \emph{proximal span condition} can be stated as below for fixed iteration count $N>0$. Specifically, for fixed $(x_0,u_0)$, we say a \emph{trajectory} or a \emph{sequence}
\[
\big\{(z_i,\delta_i,\gamma_i)\big\}_{i=0}^{2N-1},\  x_{N} \big\}
\]
satisfies the \emph{proximal span condition} if
\begin{alignat*}{3}
&\delta_i\in \{0,1\} &&\textup{ for }i=0,\dots,2N-1,\\
&z_{0}\in x_0+\mathrm{span}\{u_0\}\\
&d_i=\left\{
\begin{array}{ll}
z_i-\prox_{\gamma_i f}(z_i) &\textup{for some $\gamma_i>0$, if $\delta_i=0$, }\\
 z_i-\prox_{\gamma_i g}(z_i) &\textup{for some $\gamma_i>0$, if $\delta_i=1$,}
\end{array}
\right.
&&\textup{ for }i=0,\dots,2N-1,\\
&z_{i}\in x_0+\mathrm{span}\{u_0, d_0,\dots,d_{i-1}\}
&&\textup{ for }i=1,\dots,2N-1,\\
&x_{N}\in x_0+\mathrm{span}\{u_0,d_0,\dots,d_{2N-1}\}
&&
\!\!\!\!\!\!\!\!\!\!\!\!\!\!\!\!\!\!\!\!\!\!\!\!\!\!\!\!\!\!\!\!\!\!\!\!\!\!\!
\end{alignat*}
and we say a \emph{method} satisfies the \emph{proximal span condition} if the trajectory $\big\{(z_i,\delta_i,\gamma_i)\big\}_{i=0}^{2N-1},\  x_{N} \big\}$ produced by the first-order deterministic method satisfies the proximal span condition for any starting point $(x_0,u_0)$. 

To clarify, we make no assumption on either the order of the proximal oracle evaluations or which function is queried at each step. We only assume that the total number of proximal oracle evaluations is $2N$.

Under the proximal span condition, we establish the following lower bound.

\begin{theorem}\label{thm:lbspan}
Fix a positive integer $N$ and let $\mu>0$. For any initial pair
$(x_0,u_0)\in \mathbb{R}^{2N+2}\times \mathbb{R}^{2N+2}$, there exist
a closed, proper, and convex function
$f\colon \mathbb{R}^{2N+2}\to \mathbb{R}\cup\{\infty\}$,
a closed, proper, and $\mu$-strongly convex function
$g\colon \mathbb{R}^{2N+2}\to \mathbb{R}\cup\{\infty\}$,
$x_\star\in\argmin(f+g)$, and
$u_\star\in\partial g(x_\star)$ with $-u_\star\in\partial f(x_\star)$
such that the following holds:

For every deterministic $N$-step prox-prox method satisfying the proximal span condition,
if $x_N$ denotes its final output when run from $(x_0,u_0)$ on the pair $(f,g)$, then
\[
\|x_N-x_\star\|^2
\ge
\frac{\|x_0-x_\star\|^2+\|u_0-u_\star\|^2}
{1+4N^2\mu^2+4N\mu}.
\]
\end{theorem}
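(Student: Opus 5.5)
The plan is a worst-case construction in the spirit of Nesterov's chain (``zero-chain'') functions, adapted to proximal oracles. The first step is to normalize. The proximal span condition is invariant under translating $x$ by $x_0$ and under orthogonal changes of coordinates, so we may assume $x_0=0$ and $u_0\in\mathrm{span}\{e_1\}$. Translating $x$ shifts $u_\star$ by nothing and $u_0$ by nothing, so $\|u_0-u_\star\|$ is unaffected; the constant vector $u_0$ is absorbed by choosing $u_\star$ relative to it. We also observe that the target denominator factors as
\[
1+4N^2\mu^2+4N\mu=(1+2N\mu)^2,
\]
so it suffices to exhibit $(f,g,x_\star,u_\star)$ with
\[
\|x_N-x_\star\|^2\ge \frac{\|x_\star\|^2+\|u_0-u_\star\|^2}{(1+2N\mu)^2}
\]
for every admissible trajectory.

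For the construction, take $f$ to be the indicator of a subspace $U$ plus a linear term, $f(x)=\iota_U(x)-\langle a,x\rangle$, and take $g(x)=\frac{\mu}{2}\|x\|^2-\langle b,x\rangle+\iota_V(x)$ for a subspace $V$. Then $g$ is closed, proper and $\mu$-strongly convex. Both proximal maps are affine: $\prox_{\gamma f}(z)=P_U(z+\gamma a)$ and $\prox_{\gamma g}(z)=P_V\big((z+\gamma b)/(1+\gamma\mu)\big)$. Choose $U$ and $V$ as ``alternating pairings'' of coordinates: $U$ ties $x_{2i-1}=x_{2i}$, and $V$ ties $x_{2i}=x_{2i+1}$. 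Choose $a$ and $b$ supported on the first coordinate (aligned with $u_0$). The key invariant is then the following. If $z\in\mathrm{span}\{e_1,\dots,e_j\}$, then the residual $z-\prox_{\gamma f}(z)$ or $z-\prox_{\gamma g}(z)$ lies in $\mathrm{span}\{e_1,\dots,e_{j+1}\}$, so each oracle call reveals at most one new coordinate. By induction over the $2N$ calls, in whatever order and to whichever function, $x_N\in\mathrm{span}\{e_1,\dots,e_{2N+1}\}$. Hence
\[
\|x_N-x_\star\|^2\ge\sum_{j>2N+1}(x_\star)_j^2=(x_\star)_{2N+2}^2 .
\]

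Next, compute $x_\star$ and $u_\star$ explicitly from the optimality conditions $u_\star\in\partial g(x_\star)$ and $-u_\star\in\partial f(x_\star)$. These form a linear system whose structure is a discrete second-difference (tridiagonal) recursion along the chain, so $x_\star$ and $u_\star$ are explicit and roughly geometric/affine in the coordinate index. The free parameters are $a$, $b$, possibly a weighting of the pairings (replace $x_{2i-1}=x_{2i}$ by $x_{2i}=\theta_i x_{2i-1}$), and the dimension cut-off. These are tuned so that
\[
\|x_\star\|^2+\|u_0-u_\star\|^2=(1+2N\mu)^2\,(x_\star)_{2N+2}^2 .
\]
The guiding heuristic is the Lyapunov proof of Theorem~\ref{thm:FDR-rate}. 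Every inequality there is tight when $\langle g'(y)-u_\star,y-x_\star\rangle=\mu\|y-x_\star\|^2$, which holds for quadratic $g$, and when $\langle f'(x)+u_\star,x-x_\star\rangle=0$, which holds for affine-on-subspace $f$. So the construction should mimic the FDR trajectory: $x_\star$ grows linearly along the chain, roughly $(x_\star)_j\propto 1+\mu\cdot(\text{index})$, matching $\Delta_k=1+4kN\mu^2$ and $\eta_k$.

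The main obstacle is this parameter tuning: making the sum of squares of the explicit chain solution (plus the dual part) collapse exactly to $(1+2N\mu)^2$ times the last coordinate squared. I would first try uniform pairings with the linear term placed only on $e_1$, solve the recursion in closed form, and check the ratio. If it falls short, I would introduce per-link weights $\theta_i$ and choose them so that the telescoping identity in Cases~2--4 of the FDR analysis becomes an equality along the chain. Finally, existence of $x_\star$ and total duality are immediate: both functions are polyhedral/quadratic and $U\cap V\neq\emptyset$, so the qualification condition holds automatically.
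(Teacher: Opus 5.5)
Your skeleton matches the paper's: alternating two-coordinate blocks so that each prox call exposes at most one new coordinate, and the factorization $1+4N^2\mu^2+4N\mu=(1+2N\mu)^2$. However, the step you call the main obstacle is the entire quantitative content of the theorem, and the architecture you commit to cannot deliver it.

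In your setup $f+g=\frac{\mu}{2}\|x\|^2-\langle a+b,x\rangle+\iota_{U\cap V}$, where $U\cap V$ is a line through the origin. Hence $x_\star=\mu^{-1}P_{U\cap V}(a+b)$ and $\langle a+b,x_\star\rangle=\mu\|x_\star\|^2>0$. Since $a+b\in\mathrm{span}\{e_1\}$, this forces $(x_\star)_1\neq 0$ along a coordinate your chain count already treats as known. That component inflates $\|x_0-x_\star\|^2$ without ever entering the error.

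With your assignment ($U$ pairs $(1,2),(3,4),\dots$; $V$ pairs $(2,3),\dots$; $e_{2N+2}$ free in $V$) the loss is substantial. Take $N=1$ and fix $x_\star=t$. You can match $(u_\star)_1$ to $(u_0)_1$ through $b$, but the remaining dual components are forced: $(u_\star)_4=\mu t_4$, $(u_\star)_3t_3=-\mu t_4^2$, and $(u_\star)_2t_2=\mu(t_2^2+t_3^2+t_4^2)$. Normalizing $t_4=1$ and writing $q_i=t_i^2$,
\[
\frac{\|x_\star\|^2+\|u_0-u_\star\|^2}{(x_\star)_4^2}
= t_1^2+1+\mu^2+q_2+q_3+\frac{\mu^2(1+q_2+q_3)^2}{q_2}+\frac{\mu^2}{q_3}
\;\ge\; 1+5\mu^2+4\mu\sqrt{1+\mu^2}\;>\;(1+2\mu)^2 .
\]
So no choice of $a$, $b$, $\theta_i$ proves the theorem. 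Exchanging the roles of $U$ and $V$ helps, but the forced $(x_\star)_1$ still leaves the ratio at least $(1+2N\mu)^2+(x_\star)_1^2/(x_\star)_{2N+2}^2$. Your FDR-tightness heuristic also points the wrong way. It targets $1+4N^2\mu^2$, whereas the extremal $x_\star-x_0$ actually decreases along the chain rather than growing linearly.

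The missing idea is to fix the scale of $x_\star$ with a constraint that is itself the first link of the chain, not with a linear term along an already-known direction. The paper does this in two steps.

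\begin{itemize}
\item It places $u_0$ on an extra coordinate $e_{-1}$ that both feasible sets force to zero. The terms $\pm\langle u_0,x-x_0\rangle$ then cancel in $f+g$, and $\|u_0-u_\star\|=\|g_\star\|$.
\item It pins $y_0=t_0$ in $C_t$. Then $C_t\cap D_t=\{t\}$, $x_\star=x_0+t$, the normal cone of $C_t$ leaves coordinate $0$ unconstrained, and the first $g$-call is what reveals $e_0$.
\end{itemize}

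The optimality conditions then reduce to blockwise equalities, solved explicitly by
\[
(t_j^\star)^2=\frac{\mu}{(1+2N\mu)(1+j\mu)(1+(j+1)\mu)}\quad (j<2N),\qquad t_{2N}^\star=\frac{1}{1+2N\mu},
\]
\[
g_{2k}^\star=-(1+2k\mu)\,t_{2k}^\star,\qquad g_{2k+1}^\star=(1+(2k+2)\mu)\,t_{2k+1}^\star .
\]
The telescoping $\frac{\mu}{(1+j\mu)(1+(j+1)\mu)}=\frac{1}{1+j\mu}-\frac{1}{1+(j+1)\mu}$ gives $\|t\|^2=\frac{1}{1+2N\mu}$ and $\|g_\star\|^2=\frac{2N\mu}{1+2N\mu}$, hence exactly the ratio $(1+2N\mu)^2$.

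The same effect is available inside your subspace framework:
\begin{itemize}
\item put $u_0$ on its own coordinate $e_0$, annihilated by both $U$ and $V$;
\item use chain coordinates $e_1,\dots,e_{2N+1}$;
\item let $V$ leave $e_1$ free, and give only $g$ the linear term $-\beta x_1$;
\item let $U$ pair $(1,2),\dots,(2N-1,2N)$ with $e_{2N+1}$ free;
\item let $V$ pair $(2,3),\dots,(2N,2N+1)$.
\end{itemize}
Your optimality conditions then coincide with the paper's after an index shift, and the coefficients above work.
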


\subsection{Proof of Theorem~\ref{thm:lbspan}
% Worst-case instance under the span condition
}\label{sec:lbspan}

We now construct a worst-case instance for first-order deterministic prox-prox methods satisfying the proximal span condition. Throughout this subsection, the starting pair $(x_0,u_0)$ is given and fixed.

Fix $N\in \mathbb{N}$ and $\mu>0$. 
Choose an orthonormal set of vectors
\[
e_{-1},e_0,e_1,\dots,e_{2N}
\]
of $\mathbb{R}^{2N+2}$  such that $u_0 \in \mathrm{span}\{e_{-1}\}$. Let $t\in\mathbb{R}^d$ such that
\[
t :=  \sum_{i=0}^{2N} t_i e_i \in \mathbb{R}^{d}.
\]
where $t_i >0$ for $0\le i \le 2N$.
We define two closed convex sets by
\[
C_t
:=
\Big\{
y\in\mathbb{R}^{2N+2}:
y_{-1}=0,\ y_0=t_0,\ 
(y_{2k-1},y_{2k})\in[(0,0),(t_{2k-1},t_{2k})],\ k=1,\dots,N
\Big\},
\]
and
\[
D_t
:=
\Big\{
y\in\mathbb{R}^{2N+2}:
y_{-1}=0,\ 
(y_{2k},y_{2k+1})\in[(0,0),(t_{2k},t_{2k+1})],\ k=0,\dots,N-1
\Big\},
\]
where $y_i$ is shorthand for $\langle y, e_i\rangle$ for $-1\le i\le 2N$ and $[(0,0),(a,b)]$ is the line segment with endpoints $(0,0)$ and $(a,b)$.
We then define closed, proper, and $\mu$-strongly convex $g$
\[
g(x)
:=
\frac{\mu}{2}\|x-x_0\|^2+\langle u_0,x-x_0\rangle+\delta_{x_0+C_t}(x),
\]
and closed, proper, and convex (possibly non-smooth) $f$ as
\[
f(x)
:=
-\langle u_0,x-x_0\rangle+\delta_{x_0+D_t}(x).
\]
Then minimizing $f+g$ is equivalent to minimizing
\[
\frac{\mu}{2}\|x-x_0\|^2
\]
over $(x_0+C_t)\cap(x_0+D_t)$. 
Because
\[
C_t\cap D_t=\{y\in \mathbb{R}^{2N+2}: y_{-1}=0, \, y_i=t_i \textup{ for } 0\le i \le 2N\},
\]
we have
\[
(x_0+C_t)\cap(x_0+D_t)=\{x_0+y \in\mathbb{R}^{2N+2} :  y_{-1}=0, \, y_i =t_i \textup{ for } 0\le i \le 2N\}.
\]
Hence $x_\star:=x_0 + t$ is the unique minimizer of $f+g$.

Moreover, the proximal mappings admit the explicit representations
\[
\prox_{\gamma f}(x_0+y)
=
x_0+\Pi_{D_t}(y+\gamma u_0),
\]
and
\[
\prox_{\gamma g}(x_0+y)
=
x_0+\Pi_{C_t}\!\left(\frac{y-\gamma u_0}{1+\gamma\mu}\right),
\]
where $\Pi_{D_t}$ and $\Pi_{C_t}$ denote the Euclidean projections onto $D_t$ and $C_t$, respectively.

For $k=-1,0,\dots,2N$, define
\[
S_k:=\mathrm{span}\{e_{-1},e_0,\dots,e_k\},
\qquad
S_{-1}:=\mathrm{span}\{e_{-1}\}.
\]
The sets $C_t$ and $D_t$ are arranged so that the two proximal operators reveal coordinates one at a time. We will refer to this as the \emph{proximal zero-chain property}, and the next lemma formalizes this construction.
\begin{lemma}
For every $\gamma>0$ and every $k=-1,0,\dots,2N-1$,
\[
y\in S_k
\quad\Longrightarrow\quad
\prox_{\gamma f}(x_0+y) = x_0+ \Pi_{D_t}(y+\gamma u_0)\in x_0 + S_{k+1},
\]
and
\[
y\in S_k
\quad\Longrightarrow\quad
\prox_{\gamma g}(x_0+y)= x_0+ \Pi_{C_t}\!\left(\frac{y-\gamma u_0}{1+\gamma\mu}\right)\in x_0 + S_{k+1}.
\]
\end{lemma}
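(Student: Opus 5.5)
The plan is to exploit the fact that $C_t$ and $D_t$ are Cartesian products of low-dimensional pieces aligned with the orthonormal frame $e_{-1},\dots,e_{2N}$. As a result, $\Pi_{C_t}$ and $\Pi_{D_t}$ act blockwise, and the chain property can be read off one block at a time.

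\textbf{Step 1: the prox formulas.} I first confirm the two displayed prox formulas by completing the square. For $g$, write $x=x_0+z$. Then $\prox_{\gamma g}(x_0+y)$ minimizes
\[
\frac{\mu}{2}\|z\|^2+\langle u_0,z\rangle+\frac{1}{2\gamma}\|z-y\|^2
\]
over $z\in C_t$. This objective equals $\frac{1+\gamma\mu}{2\gamma}\bigl\|z-\frac{y-\gamma u_0}{1+\gamma\mu}\bigr\|^2$ plus a constant, so the minimizer is the projection onto $C_t$ of $\frac{y-\gamma u_0}{1+\gamma\mu}$. For $f$, the objective $-\langle u_0,z\rangle+\frac{1}{2\gamma}\|z-y\|^2$ equals $\frac{1}{2\gamma}\|z-(y+\gamma u_0)\|^2$ plus a constant, and we minimize it over $z\in D_t$.

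\textbf{Step 2: reduce to a statement about projections.} Since $u_0\in\mathrm{span}\{e_{-1}\}=S_{-1}\subseteq S_k$ for every $k\ge -1$, the hypothesis $y\in S_k$ implies that both $v=y+\gamma u_0$ and $v=(y-\gamma u_0)/(1+\gamma\mu)$ lie in $S_k$. It therefore suffices to show that $v\in S_k$ implies $\Pi_{D_t}(v)\in S_{k+1}$ and $\Pi_{C_t}(v)\in S_{k+1}$.

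\textbf{Step 3: blockwise projections.} The Euclidean projection onto a product of sets in orthogonal coordinate blocks is the product of the blockwise projections. Reading off the definitions:
\begin{itemize}
\item $C_t$ consists of the constraint $y_{-1}=0$, the constraint $y_0=t_0$, and the segments $[(0,0),(t_{2j-1},t_{2j})]$ for $j=1,\dots,N$.
\item $D_t$ consists of the constraint $y_{-1}=0$, the segments $[(0,0),(t_{2j},t_{2j+1})]$ for $j=0,\dots,N-1$, and a free coordinate $y_{2N}$.
\end{itemize}
For a segment with endpoint $(a,b)$, $a,b>0$, the projection of $(p,q)$ is $\lambda(a,b)$ with
\[
\lambda=\min\{1,\max\{0,(pa+qb)/(a^2+b^2)\}\}.
\]
In particular $(0,0)$ maps to $(0,0)$.

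\textbf{Step 4: case check on each block.} Take $v\in S_k$.
\begin{itemize}
\item Coordinate $-1$ is sent to $0$.
\item For $C_t$, coordinate $0$ is sent to $t_0$, which lies in $S_0\subseteq S_{k+1}$ because $k\ge -1$.
\item A block lying entirely in $\{0,\dots,k\}$ is projected into itself, hence into $S_k$.
\item A block lying entirely beyond $k$ has zero input, hence zero output.
\item The free coordinate $2N$ of $D_t$ exceeds $k$ (since $k\le 2N-1$), so it is copied as zero.
\item The only remaining case is a block that straddles $k$, namely $\{k,k+1\}$. For $D_t$ this happens when $k$ is even; for $C_t$ it happens when $k$ is odd. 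Its output is $\lambda(t_k,t_{k+1})$, which is supported on $\{k,k+1\}\subseteq S_{k+1}$.
\end{itemize}
So in every case the projection lies in $S_{k+1}$, which gives the lemma.

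The main obstacle is bookkeeping rather than analysis. One has to track the parity offset between the two block structures, the fixed coordinate $y_0=t_0$ in $C_t$, and the free coordinate $y_{2N}$ in $D_t$, and make sure that no block other than the straddling one can introduce support beyond index $k+1$.
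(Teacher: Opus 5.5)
Your proposal is correct and follows essentially the same route as the paper: the projections onto $C_t$ and $D_t$ act blockwise, the shift by $\pm\gamma u_0$ only touches the $e_{-1}$ coordinate, and only the block $\{k,k+1\}$ that straddles index $k$ can introduce the new coordinate $e_{k+1}$. Your version is simply more explicit than the paper's short argument, since you also verify the two prox formulas by completing the square and spell out the parity and edge cases (the fixed coordinate $y_0=t_0$, the free coordinate $y_{2N}$ of $D_t$, and $k=-1$).
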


\begin{proof}
Both $C_t$ and $D_t$ are Cartesian products of one-dimensional and two-dimensional blocks, and hence their Euclidean projections act blockwise. 
Since $u_0\in \mathrm{span}\{e_{-1}\}$ and both $C_t$ and $D_t$ impose the constraint $z_{-1}=0$, the shift by $\pm \gamma u_0$ only affects the $e_{-1}$-component and does not reveal any new coordinate. Therefore, if $y\in S_k$, then after projecting onto either $C_t$ or $D_t$, at most the next coordinate $e_{k+1}$ can become nonzero. This proves the claim. \qed
\end{proof}

This lemma shows that if $y\in S_k$, then 
\[
(x_0 + y)-\prox_{\gamma g}(x_0+y) \in S_{k+1}
\]
because each proximal evaluation reveals at most one new coordinate. Consequently, for any prox-prox method satisfying the proximal span condition, the final output $x_{N}$ generated after at most $2N$ proximal evaluations must belong to
\[
x_0+S_{2N-1}.
\]
Since
\[
x_\star-x_0=t=\sum_{i=0}^{2N} t_i e_i
\]
has a nonzero $e_{2N}$-component, we obtain
\[
\inf_{y\in x_0+S_{2N-1}} \|y-x_\star\|^2 = t_{2N}^2>0.
\]
Thus, after at most $2N$ proximal oracle calls, no method satisfying the proximal span condition can guarantee an error smaller than $t_{2N}^2$ on this instance. The next lemma rewrites the optimality conditions at the endpoint $x_\star=x_0+t$ in blockwise form. 

\begin{lemma}\label{lem:normal-cone-conditions}
Let
\[
x_\star=x_0+t = x_0 + \sum_{i=0}^{2N} t_i e_i,
\]
and let
\[
g_\star=\sum_{i=0}^{2N} g_i e_i \in \mathbb{R}^{d}
\]
where $g_i\in \mathbb{R}$ for $0\le i \le 2N$.
Assume that
\[
g_\star \in \mu t + N_{C_t}(t)
\qquad\text{and}\qquad
-g_\star \in N_{D_t}(t).
\]
Then, with
\[
u_\star:=u_0+g_\star,
\]
we have
\[
u_\star\in \partial g(x_\star),
\qquad
-u_\star\in \partial f(x_\star).
\]
Moreover, the above inclusions on $g_\star$ are equivalent to the blockwise inequalities
\[
(g_{2k+1}-\mu t_{2k+1},\, g_{2k+2}-\mu t_{2k+2})
\cdot (t_{2k+1},t_{2k+2}) \ge 0,
\qquad k=0,\dots,N-1,
\]
\[
(g_{2k},\,g_{2k+1})\cdot (t_{2k},t_{2k+1}) \le 0,
\qquad k=0,\dots,N-1,
\]
and $g_{2N}=0$.
\end{lemma}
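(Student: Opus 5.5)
The plan is to compute both subdifferentials at $x_\star$ explicitly with the sum rule, and then read off the normal cones of $C_t$ and $D_t$ block by block.

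\emph{Step 1: subdifferential of $g$.} Write $g$ as a differentiable function that is finite everywhere, namely $\frac{\mu}{2}\|x-x_0\|^2+\langle u_0,x-x_0\rangle$, plus the indicator $\delta_{x_0+C_t}$. The subdifferential sum rule applies with no qualification issue, because the smooth part is finite on all of $\mathbb{R}^{2N+2}$. At $x_\star=x_0+t$ this gives
\[
\partial g(x_\star)=\mu t+u_0+N_{x_0+C_t}(x_\star)=u_0+\mu t+N_{C_t}(t).
\]
Hence $u_\star=u_0+g_\star\in\partial g(x_\star)$ holds exactly when $g_\star\in\mu t+N_{C_t}(t)$.

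\emph{Step 2: subdifferential of $f$.} In the same way,
\[
\partial f(x_\star)=-u_0+N_{D_t}(t).
\]
So $-u_\star=-u_0-g_\star\in\partial f(x_\star)$ holds exactly when $-g_\star\in N_{D_t}(t)$. This proves the first claim, and it also shows the two assumed inclusions are equivalent to the two subdifferential memberships.

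\emph{Step 3: blockwise form of the normal cones.} Both $C_t$ and $D_t$ are Cartesian products over coordinate groups, so each normal cone is the product of the normal cones of the factors. There are three kinds of factor.
\begin{itemize}
\item A coordinate fixed to a constant (the $e_{-1}$ coordinate in both sets, and the $e_0$ coordinate in $C_t$) has normal cone $\mathbb{R}$. It imposes no condition, and this is consistent with $g_\star$ and $t$ having zero $e_{-1}$-component.
\item A coordinate that is completely free has normal cone $\{0\}$. This occurs only for $e_{2N}$ in $D_t$, since the blocks of $D_t$ stop at index $2N-1$. It forces $-g_{2N}=0$, i.e.\ $g_{2N}=0$.
\item A segment $[(0,0),p]$ at its endpoint $p$ has normal cone $\{v:\langle v,q-p\rangle\le0\ \forall q\in[(0,0),p]\}$. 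Since $q-p=-sp$ with $s\in[0,1]$, this cone equals $\{v: v\cdot p\ge 0\}$.
\end{itemize}

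Applying the segment case to the blocks of $C_t$ with $p=(t_{2k-1},t_{2k})$, $k=1,\dots,N$, gives $(g_{2k-1}-\mu t_{2k-1},\,g_{2k}-\mu t_{2k})\cdot(t_{2k-1},t_{2k})\ge0$. Reindexing $k\to k+1$ yields the stated first inequality for $k=0,\dots,N-1$.

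Applying it to the blocks of $D_t$ with $p=(t_{2k},t_{2k+1})$, $k=0,\dots,N-1$, gives $-(g_{2k},g_{2k+1})\cdot(t_{2k},t_{2k+1})\ge0$. This is the second inequality.

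\emph{Main care point.} The argument is routine. The main place to be careful is the bookkeeping of which coordinates are fixed, segment-constrained, or free in each set. In particular, one must notice that $e_{2N}$ is free in $D_t$, which produces the condition $g_{2N}=0$, whereas the $e_0$ coordinate imposes no condition in $C_t$.
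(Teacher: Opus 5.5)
Your proposal is correct and follows essentially the same route as the paper. Like the paper, you compute $\partial g(x_\star)=u_0+\mu t+N_{C_t}(t)$ and $\partial f(x_\star)=-u_0+N_{D_t}(t)$ via the sum and translation rules, then split the normal cones blockwise: the segment-endpoint cone is $\{w: w\cdot p\ge 0\}$, and the free coordinate $e_{2N}$ in $D_t$ forces $g_{2N}=0$. Your explicit justification of the sum rule and your derivation of the segment cone only add detail that the paper leaves implicit.
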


\begin{proof}
Recall that
\[
g(x)=\frac{\mu}{2}\|x-x_0\|^2+\langle u_0,x-x_0\rangle+\delta_{x_0+C_t}(x),
\]
and
\[
f(x)=-\langle u_0,x-x_0\rangle+\delta_{x_0+D_t}(x).
\]
Since $x_\star=x_0+t$, we have
\[
\partial g(x_\star)
=
\mu(x_\star-x_0)+u_0+N_{x_0+C_t}(x_\star)
=
\mu t+u_0+N_{C_t}(t),
\]
where we used the translation rule
\[
N_{x_0+C_t}(x_0+t)=N_{C_t}(t).
\]
Hence, if
\[
g_\star\in \mu t+ N_{C_t}(t),
\]
then
\[
u_\star:=u_0+g_\star \in \partial g(x_\star).
\]
Similarly,
\[
\partial f(x_\star)
=
-u_0+N_{x_0+D_t}(x_\star)
=
-u_0+N_{D_t}(t),
\]
and therefore
\[
-g_\star\in N_{D_t}(t)
\quad\Longrightarrow\quad
-u_\star=-(u_0+g_\star)\in \partial f(x_\star).
\]

It remains to characterize the normal cone conditions explicitly. Since $C_t$ is the Cartesian product of the singleton constraint $z_0=t_0$, the constraint $z_{-1}=0$, and the line segments
\[
[(0,0),(t_{2k-1},t_{2k})],\qquad k=1,\dots,N,
\]
its normal cone at $t$ is the product of the corresponding blockwise normal cones. The equality constraints fixing the coordinates $-1$ and $0$ contribute the full normal spaces in those coordinates, and hence impose no restrictions on the
corresponding components of a normal vector. Thus, only the two-dimensional segment blocks need to be considered. For a segment
\[
[(0,0),(a,b)],
\]
the normal cone at the endpoint $(a,b)$ is $\{w\in\mathbb{R}^2:\ w\cdot(a,b)\ge 0\}$. Applying this with
\[
w=(g_{2k+1}-\mu t_{2k+1},\, g_{2k+2}-\mu t_{2k+2})
\]
gives
\[
(g_{2k+1}-\mu t_{2k+1},\, g_{2k+2}-\mu t_{2k+2})
\cdot (t_{2k+1},t_{2k+2}) \ge 0,
\qquad k=0,\dots,N-1.
\]
Likewise, $D_t$ is the product of the segment blocks
\[
[(0,0),(t_{2k},t_{2k+1})],\qquad k=0,\dots,N-1,
\]
together with the constraint $z_{-1}=0$. Therefore,
\[
-g_\star\in N_{D_t}(t)
\]
is equivalent to
\[
(g_{2k},g_{2k+1})\cdot (t_{2k},t_{2k+1})\le 0,
\qquad k=0,\dots,N-1.
\]
Finally, $D_t$ leaves the coordinate $2N$ free and hence its normal component there must vanish. So, $g_{2N}=0$ and this proves the result. \qed
\end{proof}
We now choose the coefficients of $t$ and  $g_\star$ so that every blockwise inequality is satisfied with equality under the normalization
\[
\|x_0-x_\star\|^2+\|u_0-u_\star\|^2=1.
\] 
\begin{lemma}\label{lem:feasible_coefficients}
Define
\[
t_{2k}^\star
=
\sqrt{
\frac{\mu}{(1+2N\mu)(1+2k\mu)(1+(2k+1)\mu)}
},
\qquad k=0,\dots,N-1,
\]
\[
t_{2k+1}^\star
=
\sqrt{
\frac{\mu}{(1+2N\mu)(1+(2k+1)\mu)(1+(2k+2)\mu)}
},
\qquad k=0,\dots,N-1,
\]
\[
t_{2N}^\star=\frac{1}{1+2N\mu},
\]
and
\[
g_{2k}^\star = -(1+2k\mu)t_{2k}^\star,
\qquad
g_{2k+1}^\star = (1+(2k+2)\mu)t_{2k+1}^\star,
\qquad k=0,\dots,N-1,
\]
with
\[
g_{2N}^\star=0.
\]
Then the pair
\[
t=\sum_{i=0}^{2N}t_i^\star e_i ,
\qquad
g_\star=\sum_{i=0}^{2N}g_i^\star e_i
\]
with 
\[
x_\star = x_0+t, \quad u_\star = u_0 + g_\star
\]
satisfies
\begin{align*}
&\|x_0-x_\star\|^2 + \|u_0-u_\star\|^2 =  \sum_{i=0}^{2N} (t_i^\star)^2 + \sum_{i=0}^{2N} (g_i^\star)^2 = 1,\\
& (g^{\star}_{2k+1}-\mu t^{\star}_{2k+1},\, g^{\star}_{2k+2}-\mu t^{\star}_{2k+2})
\cdot (t^{\star}_{2k+1},t^{\star}_{2k+2}) = 0,
\quad k=0,\dots,N-1,\\
& (g^{\star}_{2k},\,g^{\star}_{2k+1})\cdot (t^{\star}_{2k},t^{\star}_{2k+1}) = 0,
\quad k=0,\dots,N-1,
\end{align*}
so that $u_\star \in \partial g(x_\star), -u_\star\in \partial f(x_\star)$, and therefore $x_\star \in \mathrm{argmin}(f+g)$.
\end{lemma}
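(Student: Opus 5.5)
The plan is to verify the three displayed identities by direct substitution, and then invoke Lemma~\ref{lem:normal-cone-conditions} to conclude. Throughout I will write $c:=\mu/(1+2N\mu)$ and $a_j:=1+j\mu$. The formulas then read
\[
(t^\star_{2k})^2=\frac{c}{a_{2k}a_{2k+1}},\qquad (t^\star_{2k+1})^2=\frac{c}{a_{2k+1}a_{2k+2}},\qquad g^\star_{2k}=-a_{2k}t^\star_{2k},\qquad g^\star_{2k+1}=a_{2k+2}t^\star_{2k+1}.
\]
All $t_i^\star>0$, so this is an admissible choice of $t$ in the construction. Moreover $t$ and $g_\star$ have no $e_{-1}$ component, so $\|x_0-x_\star\|^2=\sum_i (t_i^\star)^2$ and $\|u_0-u_\star\|^2=\sum_i (g_i^\star)^2$ follow from orthonormality.

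\emph{Blockwise equalities.} For the $D_t$ blocks,
\[
g^\star_{2k}t^\star_{2k}+g^\star_{2k+1}t^\star_{2k+1}=-\frac{c}{a_{2k+1}}+\frac{c}{a_{2k+1}}=0 .
\]
For the $C_t$ blocks, first note $g^\star_{2k+1}-\mu t^\star_{2k+1}=a_{2k+1}t^\star_{2k+1}$, so the first product equals $c/a_{2k+2}$. When $k\le N-2$, the second coordinate gives $(g^\star_{2k+2}-\mu t^\star_{2k+2})t^\star_{2k+2}=-a_{2k+3}(t^\star_{2k+2})^2=-c/a_{2k+2}$, and the block cancels. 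The boundary block $k=N-1$ has to be handled separately, because $g^\star_{2N}=0$. There the second term is
\[
-\mu (t^\star_{2N})^2=-\frac{\mu}{(1+2N\mu)^2}=-\frac{c}{a_{2N}},
\]
which again cancels $c/a_{2N}$. This boundary case is the only point where the special value $t^\star_{2N}=1/(1+2N\mu)$ enters the equalities.

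\emph{Normalization.} I expect this to be the main computation, and the key is to group the four squared terms of block $k$ together. The even coordinate contributes
\[
(t^\star_{2k})^2\big(1+a_{2k}^2\big)=\frac{c}{a_{2k}a_{2k+1}}+\frac{c\,a_{2k}}{a_{2k+1}},
\]
and the odd coordinate contributes
\[
\frac{c}{a_{2k+1}a_{2k+2}}+\frac{c\,a_{2k+2}}{a_{2k+1}}.
\]
Since $a_{2k}+a_{2k+2}=2a_{2k+1}$, the second pieces sum to $2c$. The first pieces telescope through $\frac{1}{a_ja_{j+1}}=\frac1\mu\big(\frac1{a_j}-\frac1{a_{j+1}}\big)$, which gives $\frac{c}{\mu}\big(\frac{1}{a_{2k}}-\frac{1}{a_{2k+2}}\big)$. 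Summing over $k=0,\dots,N-1$ yields
\[
c\Big[\frac{1}{\mu}\Big(1-\frac{1}{1+2N\mu}\Big)+2N\Big]=\frac{2N\mu(2+2N\mu)}{(1+2N\mu)^2}.
\]
Adding $(t^\star_{2N})^2+(g^\star_{2N})^2=1/(1+2N\mu)^2$ then gives
\[
\frac{1+4N\mu+4N^2\mu^2}{(1+2N\mu)^2}=1 .
\]

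\emph{Conclusion.} The equalities imply the inequalities of Lemma~\ref{lem:normal-cone-conditions}, and $g^\star_{2N}=0$ holds by definition. That lemma therefore gives $u_\star\in\partial g(x_\star)$ and $-u_\star\in\partial f(x_\star)$. Hence $0\in\partial g(x_\star)+\partial f(x_\star)\subseteq\partial(f+g)(x_\star)$, so $x_\star\in\argmin(f+g)$. This is consistent with the earlier observation that $x_0+t$ is the unique minimizer.
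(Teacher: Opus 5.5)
Your proposal is correct and follows essentially the same route as the paper. You verify the blockwise equalities by direct substitution, treating the boundary block $k=N-1$ separately, and get the normalization from a telescoping sum together with $a_{2k}+a_{2k+2}=2a_{2k+1}$ before invoking Lemma~\ref{lem:normal-cone-conditions}; the only cosmetic difference is that you group $(t_j^\star)^2+(g_j^\star)^2$ coordinate by coordinate, whereas the paper sums the $t$-terms and the $g$-terms separately.
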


% \[
% \inf_{y\in x_0+S_{2N-1}} \|y-x_\star\|^2 = t_{2N}^2.
% \]

\begin{proof}
We first verify the inequalities.

For each $k=0,\dots,N-2$, we have
\[
g_{2k+1}^\star-\mu t_{2k+1}^\star
=(1+(2k+1)\mu)t_{2k+1}^\star,
\]
and
\[
g_{2k+2}^\star-\mu t_{2k+2}^\star
=-(1+(2k+3)\mu)t_{2k+2}^\star.
\]
Therefore,
\begin{align*}
&(g_{2k+1}^\star-\mu t_{2k+1}^\star,\, g_{2k+2}^\star-\mu t_{2k+2}^\star)
\cdot (t_{2k+1}^\star,t_{2k+2}^\star) \\
&\qquad
=(1+(2k+1)\mu)(t_{2k+1}^\star)^2
-(1+(2k+3)\mu)(t_{2k+2}^\star)^2.
\end{align*}
By the definition of $t_{2k+1}^\star$ and $t_{2k+2}^\star$,
\[
(1+(2k+1)\mu)(t_{2k+1}^\star)^2
=
\frac{\mu}{(1+2N\mu)(1+(2k+2)\mu)},
\]
and
\[
(1+(2k+3)\mu)(t_{2k+2}^\star)^2
=
\frac{\mu}{(1+2N\mu)(1+(2k+2)\mu)}.
\]
Hence
\[
(g_{2k+1}^\star-\mu t_{2k+1}^\star,\, g_{2k+2}^\star-\mu t_{2k+2}^\star)
\cdot (t_{2k+1}^\star,t_{2k+2}^\star)=0.
\]
For $k=N-1$,
\[
g_{2N-1}^\star-\mu t_{2N-1}^\star
=
(1+(2N-1)\mu)t_{2N-1}^\star,
\qquad
g_{2N}^\star-\mu t_{2N}^\star
=
-\mu t_{2N}^\star.
\]
Therefore,
\begin{align*}
&(g_{2N-1}^\star-\mu t_{2N-1}^\star,\, g_{2N}^\star-\mu t_{2N}^\star)
\cdot (t_{2N-1}^\star,t_{2N}^\star) \\
&\qquad
=(1+(2N-1)\mu)(t_{2N-1}^\star)^2
-\mu(t_{2N}^\star)^2.
\end{align*}
Now
\[
(t_{2N-1}^\star)^2
=
\frac{\mu}{(1+2N\mu)(1+(2N-1)\mu)(1+2N\mu)}
=
\frac{\mu}{(1+(2N-1)\mu)(1+2N\mu)^2},
\]
so
\[
(1+(2N-1)\mu)(t_{2N-1}^\star)^2
=
\frac{\mu}{(1+2N\mu)^2}.
\]
Also,
\[
(t_{2N}^\star)^2=\frac{1}{(1+2N\mu)^2}.
\]
Hence
\[
(1+(2N-1)\mu)(t_{2N-1}^\star)^2
=
\mu (t_{2N}^\star)^2,
\]
and therefore
\[
(g_{2N-1}^\star-\mu t_{2N-1}^\star,\, g_{2N}^\star-\mu t_{2N}^\star)
\cdot (t_{2N-1}^\star,t_{2N}^\star)=0.
\]
Thus,
\[
(g_{2k+1}^\star-\mu t_{2k+1}^\star,\, g_{2k+2}^\star-\mu t_{2k+2}^\star)
\cdot (t_{2k+1}^\star,t_{2k+2}^\star)=0,
\qquad k=0,\dots,N-1.
\]
Similarly, for each $k=0,\dots,N-1$,
\begin{align*}
(g_{2k}^\star,g_{2k+1}^\star)\cdot (t_{2k}^\star,t_{2k+1}^\star)
&=
-(1+2k\mu)(t_{2k}^\star)^2
+(1+(2k+2)\mu)(t_{2k+1}^\star)^2.
\end{align*}
Again, by the definitions,
\[
(1+2k\mu)(t_{2k}^\star)^2
=
\frac{\mu}{(1+2N\mu)(1+(2k+1)\mu)},
\]
and
\[
(1+(2k+2)\mu)(t_{2k+1}^\star)^2
=
\frac{\mu}{(1+2N\mu)(1+(2k+1)\mu)}.
\]
Therefore,
\[
(g_{2k}^\star,g_{2k+1}^\star)\cdot (t_{2k}^\star,t_{2k+1}^\star)=0,
\qquad k=0,\dots,N-1.
\]
Thus, all block inequalities are satisfied with equality.
It remains to verify the normalization constraint. For each $j=0,\dots,2N-1$,
\[
(t_j^\star)^2
=
\frac{\mu}{(1+2N\mu)(1+j\mu)(1+(j+1)\mu)}
=
\frac{1}{1+2N\mu}
\left(
\frac{1}{1+j\mu}-\frac{1}{1+(j+1)\mu}
\right).
\]
Hence
\[
\sum_{j=0}^{2N-1}(t_j^\star)^2
=
\frac{1}{1+2N\mu}
\sum_{j=0}^{2N-1}
\left(
\frac{1}{1+j\mu}-\frac{1}{1+(j+1)\mu}
\right)
=
\frac{1}{1+2N\mu}\left(1-\frac{1}{1+2N\mu}\right).
\]
Adding $(t_{2N}^\star)^2=\frac{1}{(1+2N\mu)^2}$ yields
\[
\sum_{j=0}^{2N}(t_j^\star)^2=\frac{1}{1+2N\mu}.
\]
Next, for each $k=0,\dots,N-1$,
\[
(g_{2k}^\star)^2
=
(1+2k\mu)^2 (t_{2k}^\star)^2
=
\frac{\mu(1+2k\mu)}{(1+2N\mu)(1+(2k+1)\mu)},
\]
and
\[
(g_{2k+1}^\star)^2
=
(1+(2k+2)\mu)^2 (t_{2k+1}^\star)^2
=
\frac{\mu(1+(2k+2)\mu)}{(1+2N\mu)(1+(2k+1)\mu)}.
\]
Therefore,
\[
\begin{aligned}
(g_{2k}^\star)^2+(g_{2k+1}^\star)^2
&=
\frac{\mu}{1+2N\mu}
\left(
\frac{1+2k\mu}{1+(2k+1)\mu}
+
\frac{1+(2k+2)\mu}{1+(2k+1)\mu}
\right) \\
&=
\frac{\mu}{1+2N\mu}\cdot
\frac{2+(4k+2)\mu}{1+(2k+1)\mu}
=
\frac{2\mu}{1+2N\mu}.
\end{aligned}
\]
Summing over $k=0,\dots,N-1$ and using $g_{2N}^\star=0$, we obtain
\[
\sum_{j=0}^{2N}(g_j^\star)^2
=
\sum_{k=0}^{N-1}\big((g_{2k}^\star)^2+(g_{2k+1}^\star)^2\big)
=
\frac{2N\mu}{1+2N\mu}.
\]
Combining the two identities gives
\[
\sum_{j=0}^{2N}(t_j^\star)^2+\sum_{j=0}^{2N}(g_j^\star)^2
=
\frac{1}{1+2N\mu}+\frac{2N\mu}{1+2N\mu}
=
1.
\]
\qed
\end{proof}
Together with the fact that
\[
\|x_{N}-x_\star\|^2
\ge
\inf_{y\in x_0+S_{2N-1}}\|y-x_\star\|^2
=
(t_{2N}^\star)^2
\]
and 
\[
(t_{2N}^\star)^2
=
\frac{1}{(1+2N\mu)^2}
=
\frac{\|x_0-x_\star\|^2+\|u_0-u_\star\|^2}{1+4N^2\mu^2+4N\mu},
\]
we obtain
\[
\|x_{N}-x_\star\|^2
\ge
\frac{\|x_0-x_\star\|^2+\|u_0-u_\star\|^2}
{1+4N^2\mu^2+4N\mu}.
\]
This proves Theorem~\ref{thm:lbspan}.

\subsection{Extension using the resisting oracle technique}\label{sec:resisting}
Next, using the so-called \emph{resisting oracle technique}, we extend the result of Theorem~\ref{thm:lbspan} to all deterministic $N$-step prox-prox methods. First, we make a formal definition.

Let $N>0$ be a positive integer. A \emph{deterministic $N$-step prox-prox method} $\mathbf A$ is a mapping
from an initial pair $(x_0,u_0)$ and a tuple of functions $(f,g)$ to a sequence of outputs
\[
\mathbf A\bigl((x_0,u_0),(f,g)\bigr)
=
\bigl\{\bigl\{(z_i,\delta_i,\gamma_i)\bigr\}_{i=0}^{2N-1},\,x_{N}\big\},
\]
where $\delta_i\in\{0,1\}$ and $\gamma_i>0$ for $i=0,\dots,2N-1$. For clarity, we call $\{z_i\}_{0\le i \le 2N-1}$ the \emph{query points} and $x_{N}$ the \emph{final output}. Moreover, $\mathbf{A}$ requires that the output sequence depends on $(f,g)$ only through $(x_0,u_0)$, previous query points, and previous proximal oracle calls.
More precisely, letting
\[
y_i :=
\begin{cases}
\prox_{\gamma_i f}(z_i), & \delta_i=0,\\
\prox_{\gamma_i g}(z_i), & \delta_i=1,
\end{cases}
\qquad i=0,\dots,2N-1,
\]
there exist deterministic maps $\mathbf A_0,\dots,\mathbf A_{2N}$ such that
\[
(z_0,\delta_0,\gamma_0)=\mathbf A_0(x_0,u_0),
\]
\[
(z_i,\delta_i,\gamma_i)
=
\mathbf A_i\Bigl(
(x_0,u_0),\,
\{(z_j,\delta_j,\gamma_j,y_j)\}_{j=0}^{i-1}
\Bigr),
\qquad i=1,\dots,2N-1,
\]
and
\[
x_{N}
=
\mathbf A_{2N}\Bigl(
(x_0,u_0),\,
\{(z_j,\delta_j,\gamma_j,y_j)\}_{j=0}^{2N-1}
\Bigr).
\]

With this definition, we are now ready to formally state the more general lower bound.
\begin{theorem}\label{thm:resisting-oracle}
Fix a positive integer $N$ and let $\mu>0$. Let $d\ge 4N+4$.
For every deterministic $N$-step prox-prox method $\mathbf A$ on
$\mathbb R^d$ and every initial pair
$(x_0,u_0)\in \mathbb R^d\times \mathbb R^d$, there exist a closed,
proper, and convex function
$f\colon \mathbb R^d\to \mathbb R\cup\{\infty\}$ and a closed, proper,
and $\mu$-strongly convex function
$g\colon \mathbb R^d\to \mathbb R\cup\{\infty\}$, together with
$x_\star\in\argmin(f+g)$ and
$u_\star\in\partial g(x_\star)$ with $-u_\star\in\partial f(x_\star)$,
such that, if $x_N$ denotes the final output of
$\mathbf A$ when run from $(x_0,u_0)$ on $(f,g)$, then
\[
\|x_N-x_\star\|^2
\ge
\frac{\|x_0-x_\star\|^2+\|u_0-u_\star\|^2}
{1+4N^2\mu^2+4N\mu}.
\]
\end{theorem}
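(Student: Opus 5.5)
We will run the resisting-oracle argument: the worst-case instance of Theorem~\ref{thm:lbspan} is constructed adaptively while the method runs, so that the method, although arbitrary, behaves as if it satisfied a proximal span condition relative to an orthonormal frame chosen online. Fix $\mathbf A$ and $(x_0,u_0)$. Translate so that $x_0=0$ (replace $z$ by $z-x_0$). We will build an orthonormal sequence $e_{-1},e_0,\dots,e_{2N}$ one vector per oracle call, with $e_{-1}$ parallel to $u_0$ (any unit vector if $u_0=0$). The instance is always the pair $(f,g)$ of Section~\ref{sec:lbspan} with the coefficients $t^\star,g_\star$ of Lemma~\ref{lem:feasible_coefficients}. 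The task is to choose the frame so that every answer the oracle gives is consistent with the final instance.

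\textbf{Construction of the frame.} Keep an index $k(i)$ equal to the number of revealed coordinates $e_0,\dots,e_{k}$ before query $i$. Call $\mathbf A_i$ to obtain the query $(z_i,\delta_i,\gamma_i)$. Let $V_i=\mathrm{span}\{u_0,e_{-1},\dots,e_{k},z_0,\dots,z_i\}$. Choose $e_{k+1}$ to be a unit vector orthogonal to $V_i$ and to all previously chosen $e_j$. This is possible because $\dim V_i\le 1+(2N+1)+2N\le 4N+2<d$, and this is exactly where $d\ge 4N+4$ is used, with slack reserved for $e_{2N}$ at the end. Answer the query with the prox of the model instance restricted to the revealed coordinates. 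The key observation is the following. Write $z_i=P z_i+r_i$, where $P$ projects onto $\mathrm{span}\{e_{-1},\dots,e_k\}$ and $r_i$ is orthogonal to all $e_j$, including future ones, since future $e_j$ are chosen orthogonal to $z_i$. Then the sets $C_t,D_t$, extended by the full constraint "all other coordinates $=0$" implicit in their definition, act blockwise. The component $r_i$ is simply projected to $0$, possibly after scaling by $1/(1+\gamma\mu)$. The blockwise structure from the proof of the zero-chain lemma then shows that the projection of $Pz_i$ only touches coordinates up to $k+1$. Its value depends only on $t^\star_0,\dots,t^\star_{k+1}$ and on the already-fixed $e_{-1},\dots,e_{k+1}$. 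Hence the answer $y_i$ computed at time $i$ equals $\prox_{\gamma_i f}(z_i)$ or $\prox_{\gamma_i g}(z_i)$ for the final $(f,g)$, whatever the later vectors are.

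\textbf{Conclusion.} After $2N$ queries at most $e_{-1},\dots,e_{2N-1}$ have been revealed. Obtain $x_N$ from $\mathbf A_{2N}$, then choose $e_{2N}$ orthogonal to $x_N$, to all $z_i$, and to all previous $e_j$; dimension suffices. Complete the frame arbitrarily if fewer coordinates were revealed; since they were chosen orthogonal to everything queried, consistency is preserved. Now the pair $(f,g)$ is fully defined on $\mathbb R^d$, with the extra coordinates forced to zero by $C_t$ and $D_t$. Running $\mathbf A$ on it reproduces the same trajectory by determinism and consistency. Lemma~\ref{lem:feasible_coefficients} supplies $x_\star=x_0+t$ and $u_\star$ with normalization $1$ (rescale for general $R$). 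Since $\langle x_N-x_0,e_{2N}\rangle=0$,
\[
\|x_N-x_\star\|^2\ge (t^\star_{2N})^2=\frac{1}{(1+2N\mu)^2},
\]
which gives the bound.

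\textbf{Main obstacle.} The delicate step is justifying that the oracle answer does not depend on future frame vectors even when the query $z_i$ has components outside the revealed span. In particular, we must check that the segment projections in $C_t$ and $D_t$ send a block whose first coordinate is revealed, and whose second coordinate is not yet defined, to a point using only the revealed data. One point needs attention here. A block $(y_{2k},y_{2k+1})$ whose $e_{2k+1}$ component is still undetermined has its projection depending on $\langle z,e_{2k+1}\rangle$. Choosing $e_{2k+1}$ orthogonal to $z_i$ forces that component to $0$ at the time it is fixed. This is the reason for choosing each new vector orthogonal to all past queries, and we will verify it carefully blockwise.
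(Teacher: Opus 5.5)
Your proof is correct. It is a resisting-oracle argument like the paper's, but organized differently.

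\textbf{The paper's route.} The paper proceeds modularly. It takes the instance $(\tilde f,\tilde g)$ of Theorem~\ref{thm:lbspan} built for the initial pair $(0,0)$ and lifts it generically:
\[
f_U(x)=\tilde f(U^\top(x-x_0))-\langle u_0,x-x_0\rangle,\qquad g_U(x)=\tilde g(U^\top(x-x_0))+\langle u_0,x-x_0\rangle+\tfrac{\mu}{2}\|P(x-x_0)\|^2,
\]
with $U^\top u_0=0$, so $u_0$ lies in the unconstrained complement $\mathrm{range}(P)$. It shows these proxes split between $\mathrm{range}(U)$ and $\mathrm{range}(P)$. It builds the columns of $U$ online by adding fresh columns only for the new indices in $\mathrm{supp}(\bar d_i)$. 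It then passes through the zero-respecting notion and Lemma~\ref{lem:zr-to-span}, and finishes with $\|x_N-x_\star\|^2\ge\|\tilde x_N-\tilde x_\star\|^2$.

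\textbf{Your route.} You place the Section~\ref{sec:lbspan} instance directly in $\mathbb{R}^d$ with $e_{-1}\parallel u_0$. You annihilate the complement by constraining it to zero in both $C_t$ and $D_t$, check consistency of each oracle answer block by block, and conclude from $\langle x_N-x_0,e_{2N}\rangle=0$. This is shorter and self-contained, and it avoids the lifting and zero-respecting lemmas. The cost is that it relies on the chain structure of this particular instance to know that only $e_{k+1}$ can be revealed next, whereas the paper's lift works for any base pair.

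\textbf{Two points to make explicit.} First, the complement constraint is your design choice, not something implicit in the paper's definition, which lives in $\mathbb{R}^{2N+2}$. It is harmless: it only enlarges $N_{C_t}(t)$ and $N_{D_t}(t)$ by the complement, so the certificate of Lemma~\ref{lem:feasible_coefficients} still gives $u_\star\in\partial g(x_\star)$ and $-u_\star\in\partial f(x_\star)$ with normalization exactly $1$. No rescaling is needed.

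Second, a candidate $e_{k+1}$ that does not appear in the answer to query $i$ must be discarded and re-chosen orthogonal to $V_{i+1}$ at the next query. The paper's $\mathrm{supp}(\bar d_i)$ rule does this automatically. If you kept the candidate fixed, a later query could have a component $c>0$ along it. Then a single $g$-prox at even $k$ would send the block input $(c,0)$ to $\lambda(t^\star_{k+1},t^\star_{k+2})$ with $\lambda>0$, revealing two coordinates at once and breaking the count that keeps $e_{2N}$ hidden.
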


Theorem~\ref{thm:resisting-oracle}  extends Theorem~\ref{thm:lbspan} from methods satisfying the proximal span condition to all deterministic $N$-step prox-prox methods, using the resisting-oracle technique. To do this, we introduce a \emph{prox-prox zero-respecting condition}. It is closely related to the proximal span condition and may in fact be somewhat redundant here, but we include it for clarity of exposition. 

Let $f,g:\mathbb R^d\to\mathbb R\cup\{\infty\}$, and let
\[
\big\{\bigl\{(\tilde z_i,\delta_i,\gamma_i)\bigr\}_{i=0}^{2N-1},\ \tilde x_{N} \big\}
\]
be a sequence with $\delta_i\in\{0,1\}$ and $\gamma_i>0$ for
$i=0,\dots,2N-1$. Define, for each $i=0,\dots,2N-1$,
\[
\tilde d_i :=
\begin{cases}
\tilde z_i-\prox_{\gamma_i f}(\tilde z_i), & \delta_i=0,\\[1mm]
\tilde z_i-\prox_{\gamma_i g}(\tilde z_i), & \delta_i=1.
\end{cases}
\]
We say that the trajectory
\[
\big\{\bigl\{(\tilde z_i,\delta_i,\gamma_i)\bigr\}_{i=0}^{2N-1},\ \tilde x_{N} \big\}
\]
is \emph{prox-prox zero-respecting with respect to $(f,g)$} if
\[
\mathrm{supp}(\tilde z_i)\subseteq \bigcup_{k=0}^{i-1}\mathrm{supp}(\tilde d_{k}),
\qquad i=0,\dots,2N-1
\]
and
\[
\mathrm{supp}(\tilde x_{N})\subseteq  \bigcup_{k=0}^{2N-1}\mathrm{supp}(\tilde d_{k}).
\]
where the support is taken with respect to the fixed orthonormal basis with the convention $\mathrm{supp}()=\varnothing$.

The next lemma formalizes that, for functions $(f,g)$ constructed in Theorem~\ref{thm:lbspan}, the zero-respecting condition is enough to recover the proximal span condition with $(x_0, u_0)= (0,0)$. The key idea is that proximal residuals can reveal new coordinates only consecutively, one at a time. All supports below are taken with respect to the orthonormal coordinates 
$e_{-1}, e_0,\dots,e_{2N}$ used in the construction of $(f,g)$ in Section~\ref{sec:lbspan}.

\begin{lemma}\label{lem:zr-to-span}
Let $f,g:\mathbb R^{2N+2}\to\mathbb R\cup\{\infty\}$ be the functions defined in Theorem~\ref{thm:lbspan}. 
Let
\[
\big\{(\tilde z_i,\delta_i,\gamma_i)\big\}_{i=0}^{2N-1},\ \tilde x_{N} \big\}
\]
be prox-prox zero-respecting with respect to $(f,g)$, and define
\[
\tilde d_i :=
\begin{cases}
\tilde z_i-\prox_{\gamma_i f}(\tilde z_i), & \delta_i=0,\\
\tilde z_i-\prox_{\gamma_i g}(\tilde z_i), & \delta_i=1,
\end{cases}
\qquad i=0,\dots,2N-1.
\]
Then $\big\{(\tilde z_i,\delta_i,\gamma_i)\big\}_{i=0}^{2N-1},\ \tilde x_{N} \big\}$ satisfies the proximal span condition
with initial pair $(x_0,u_0)=(0,0)$, namely,
\begin{align*}
&\tilde z_i\in \mathrm{span}\{\tilde d_0,\dots,\tilde d_{i-1}\},
\qquad i=0,\dots,2N-1,\\
&\tilde x_{N}\in \mathrm{span}\{\tilde d_0,\dots,\tilde d_{2N-1}\}.
\end{align*}
Consequently, the proof of Theorem~\ref{thm:lbspan} applies to every prox-prox zero-respecting trajectory with $(x_0,u_0)=(0,0)$.
\end{lemma}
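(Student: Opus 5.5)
The plan is to prove a stronger invariant by induction on $i$: for each $i$, the set $U_i:=\bigcup_{k=0}^{i-1}\mathrm{supp}(\tilde d_k)$ is a prefix $\{0,1,\dots,m_i\}$ of the coordinate indices (possibly empty, with $m_i=-1$), and
\[
\mathrm{span}\{\tilde d_0,\dots,\tilde d_{i-1}\}=\mathrm{span}\{e_j: j\in U_i\}.
\]
Once this holds, the zero-respecting condition $\mathrm{supp}(\tilde z_i)\subseteq U_i$ immediately gives $\tilde z_i\in\mathrm{span}\{\tilde d_0,\dots,\tilde d_{i-1}\}$. The same argument with $i=2N$ gives $\tilde x_N\in\mathrm{span}\{\tilde d_0,\dots,\tilde d_{2N-1}\}$. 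The last assertion of the lemma then follows by rerunning the argument of Theorem~\ref{thm:lbspan} verbatim.

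Throughout, $(x_0,u_0)=(0,0)$, so $g=\frac{\mu}{2}\|\cdot\|^2+\delta_{C_t}$ and $f=\delta_{D_t}$. The proximal formulas reduce to $\prox_{\gamma f}(z)=\Pi_{D_t}(z)$ and $\prox_{\gamma g}(z)=\Pi_{C_t}(z/(1+\gamma\mu))$. The coordinate $e_{-1}$ never appears: $\tilde z_i$ has no $e_{-1}$ component by zero-respecting (inductively), and both sets force $y_{-1}=0$.

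For the inductive step, suppose $\mathrm{supp}(\tilde z_i)\subseteq\{0,\dots,m_i\}$, i.e.\ $\tilde z_i\in S_{m_i}$. By the proximal zero-chain lemma, $\prox(\tilde z_i)\in S_{m_i+1}$, so $\tilde d_i\in S_{m_i+1}$. This has three consequences:
\begin{itemize}
\item $U_{i+1}$ is either $U_i$ or $U_i\cup\{m_i+1\}$, so it remains a prefix.
\item If $U_{i+1}=U_i$, then $\tilde d_i\in\mathrm{span}\{e_j:j\in U_i\}$, and the span identity persists.
\item If a new index $m_i+1$ is added, then $\tilde d_i$ has a nonzero $e_{m_i+1}$-component and all its other components lie in $\mathrm{span}\{e_j:j\le m_i\}$. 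Adjoining it to a basis of $\mathrm{span}\{e_0,\dots,e_{m_i}\}$ therefore yields $\mathrm{span}\{e_0,\dots,e_{m_i+1}\}$, a triangular-basis argument.
\end{itemize}
The base case $i=0$ is trivial: $U_0=\varnothing$ and the empty span is $\{0\}$, while zero-respecting forces $\tilde z_0=0$.

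The main obstacle is being careful that the span of the $\tilde d_k$'s, and not merely their union of supports, exhausts the coordinate subspace. This is exactly why the ``one new coordinate at a time'' structure is needed; the triangular-basis observation above handles it. A secondary point to check is the edge case where $\tilde z_i$ is supported on a strict subset of the prefix, or $\tilde d_i=0$. Neither causes trouble, since the zero-chain lemma only uses $\tilde z_i\in S_{m_i}$, and a zero residual leaves both $U$ and the span unchanged.
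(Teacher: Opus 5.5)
Your proposal is correct and follows essentially the paper's argument: an induction showing that $\mathrm{span}\{\tilde d_0,\dots,\tilde d_{i-1}\}$ is always a coordinate prefix subspace $\mathrm{span}\{e_0,\dots,e_m\}$. Zero-respecting places $\tilde z_i$ in that prefix, and the proximal zero-chain property lets $\tilde d_i$ extend it by at most one coordinate. Your explicit treatment of the $e_{-1}$ coordinate and of the triangular-basis step fills in details the paper's proof leaves implicit.
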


\begin{proof}
For notational simplicity, 
define
\[
E_{-1}:=\{0\},\qquad
E_m:=\operatorname{span}\{e_0,\dots,e_m\},\quad m=0,\dots,2N-1.
\]
We prove by induction on $k=0,\dots,2N-1$ that there exists
$m_k\in\{-1,0,\dots,k\}$ such that
\[
\mathrm{span}\{\tilde d_0,\dots,\tilde d_k\}=E_{m_k}.
\]
For $k=0$, the zero-respecting property gives
\[
\mathrm{supp}(\tilde z_0)\subseteq \varnothing,
\]
so $\tilde z_0=0$. Hence, by the prox-zero-chain property,
\[
\tilde d_0\in \mathrm{span}\{e_0\}.
\]
Therefore $\mathrm{span}\{\tilde d_0\}$ is either $\{0\}$ or $\mathrm{span}\{e_0\}$, so the claim holds with $m_0 = -1$ or $m_0=0$. Now suppose the claim holds for $k-1$, namely,
\[
\mathrm{span}\{\tilde d_0,\dots,\tilde d_{k-1}\}
=
E_{m_{k-1}}.
\]
Then
\[
\bigcup_{i=0}^{k-1}\mathrm{supp}(\tilde d_i)\subseteq \{0,\dots,m_{k-1}\}.
\]
Here, we use the convention $\{0,\dots,-1\}=\varnothing$ when $m_{k-1}=-1$.
Since the trajectory is prox-prox zero-respecting,
\[
\mathrm{supp}(\tilde z_k)\subseteq \bigcup_{i=0}^{k-1}\mathrm{supp}(\tilde d_i),
\]
and thus
\[
\tilde z_k\in \mathrm{span}\{e_0,\dots,e_{m_{k-1}}\}=E_{m_{k-1}}.
\]
Applying the prox-zero-chain property, we obtain
\[
\tilde d_k\in \mathrm{span}\{e_0,\dots,e_{m_{k-1}} , e_{m_{k-1}+1}\}.
\]
Therefore
\[
\mathrm{span}\{\tilde d_0,\dots,\tilde d_k\}
\]
is either
\[
\mathrm{span}\{e_0,\dots,e_{m_{k-1}}\}=E_{m_{k-1}}
\quad\text{or}\quad
\mathrm{span}\{e_0,\dots,e_{m_{k-1}+1}\}=E_{m_{k-1}+1}.
\]
Thus by induction, for each $k=0,\dots,2N-1$,
\[
\bigcup_{i=0}^{k-1}\mathrm{supp}(\tilde d_i)\subseteq \{0,\dots,m_{k-1}\},
\]
so the zero-respecting property implies
\[
\tilde z_k\in \mathrm{span}\{e_0,\dots,e_{m_{k-1}}\}
=
\mathrm{span}\{\tilde d_0,\dots,\tilde d_{k-1}\}.
\]
Likewise,
\[
\mathrm{supp}(\tilde x_{N})\subseteq \bigcup_{i=0}^{2N-1}\mathrm{supp}(\tilde d_i)
\subseteq \{0,\dots,m_{2N-1}\},
\]
and therefore
\[
\tilde x_{N}\in \mathrm{span}\{e_0,\dots,e_{m_{2N-1}}\}
=
\mathrm{span}\{\tilde d_0,\dots,\tilde d_{2N-1}\}.
\]
This proves that the trajectory satisfies the proximal span condition with initial pair $(x_0,u_0)=(0,0)$. \qed
\end{proof}

We now turn to the lifting construction used in the resisting-oracle argument.

\begin{lemma}
    Let $U \in \mathbb{R}^{d' \times d}$ have orthonormal columns with $d' \ge d$. For any vectors $x_0, u_0\in \mathbb{R}^{d'}$, if $f\colon \mathbb{R}^d \to \mathbb{R}\cup \{\infty\}$ is a closed, proper, and convex function, then $f_U: \mathbb{R}^{d'} \to \mathbb{R}\cup\{\infty\}$ defined by
    \[
    f_U(x) := f(U^\top (x-x_0)) - \langle u_0, x-x_0\rangle 
    \]
    is a convex, closed, and proper function.
\end{lemma}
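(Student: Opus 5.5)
The plan is to verify the three properties of $f_U$ separately. Each reduces to a standard fact: composition with an affine map, plus adding a linear function, preserves closedness, convexity and properness.

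\emph{Convexity.} Define the affine map $A\colon\mathbb{R}^{d'}\to\mathbb{R}^d$ by $A(x)=U^\top(x-x_0)$. For $x,y\in\mathbb{R}^{d'}$ and $\theta\in[0,1]$ we have $A(\theta x+(1-\theta)y)=\theta A(x)+(1-\theta)A(y)$. Convexity of $f$ then gives
\[
f(A(\theta x+(1-\theta)y))\le\theta f(A(x))+(1-\theta)f(A(y)).
\]
The term $-\langle u_0,x-x_0\rangle$ is affine in $x$, so it satisfies the convexity inequality with equality. Adding the two shows that $f_U$ is convex. Both terms take values in $\mathbb{R}\cup\{\infty\}$ and the second is always finite, so no $\infty-\infty$ ambiguity arises.

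\emph{Closedness.} I would use the equivalent characterization of closedness as lower semicontinuity. Since $f$ is closed, it is lower semicontinuous. Since $A$ is continuous, $f\circ A$ is lower semicontinuous: if $x_n\to x$ then $A(x_n)\to A(x)$, so $\liminf f(A(x_n))\ge f(A(x))$. Adding the continuous finite-valued function $-\langle u_0,\cdot-x_0\rangle$ preserves lower semicontinuity. Hence $\mathrm{epi}\,f_U$ is closed.

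\emph{Properness.} First, $f_U$ never takes the value $-\infty$, because $f$ does not and the linear term is finite. Second, $f_U$ is finite somewhere. Since $f$ is proper, pick $z\in\mathbb{R}^d$ with $f(z)<\infty$ and set $x=x_0+Uz$. Because $U$ has orthonormal columns, $U^\top U=I_d$, so $U^\top(x-x_0)=z$. Therefore $f_U(x)=f(z)-\langle u_0,Uz\rangle<\infty$.

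This properness step is the only point where the hypothesis on $U$ (orthonormal columns, $d'\ge d$) is actually used: it makes $U^\top$ surjective, so the effective domain of $f$ is reached. The rest is routine, and I expect no real obstacle.
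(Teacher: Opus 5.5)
Your proof is correct and follows the same route as the paper: composing $f$ with the affine map $x\mapsto U^\top(x-x_0)$ preserves convexity and closedness, and adding the finite affine term $-\langle u_0,x-x_0\rangle$ preserves all three properties. Your properness step is more careful than the paper's, which asserts properness of the composition without comment; you correctly note that the range of $x\mapsto U^\top(x-x_0)$ must meet $\mathrm{dom}\,f$, and that $U^\top U=I_d$ guarantees this.
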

\begin{proof}
Since the mapping $x\mapsto U^\top(x-x_0)$ is affine and $f$ is convex, closed, and proper,
the composition
\[
x\mapsto f\bigl(U^\top(x-x_0)\bigr)
\]
is convex, closed, and proper. Moreover, the function
\[
x\mapsto -\langle u_0,x-x_0\rangle
\]
is affine and finite-valued on $\mathbb{R}^{d'}$. Therefore,
\[
f_U(x)=f\bigl(U^\top(x-x_0)\bigr)-\langle u_0,x-x_0\rangle
\]
is the sum of a proper, closed, convex function and an affine function. Hence
$f_U$ is proper, closed, and convex. \qed
\end{proof}

\begin{lemma}
Let $U \in \mathbb{R}^{d' \times d}$ have orthonormal columns with $d' \ge d$.
For any vectors $x_0,u_0 \in \mathbb{R}^{d'}$, if $g\colon\mathbb{R}^d \to \mathbb{R}\cup\{\infty\}$ is closed, proper, and
$\mu$-strongly convex, then the function $g_U:\mathbb{R}^{d'} \to \mathbb{R}\cup\{\infty\}$
defined by
\[
g_U(x) := g\bigl(U^\top (x-x_0)\bigr) + \langle u_0, x-x_0\rangle
+ \frac{\mu}{2}\|P(x-x_0)\|^2,
\qquad
P := I-UU^\top,
\]
is $\mu$-strongly convex.
\end{lemma}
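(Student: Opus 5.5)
Since the strong convexity of $g$ means that $g-\frac{\mu}{2}\|\cdot\|^2$ is convex, the plan is to show directly that
\[
h(x) := g_U(x) - \frac{\mu}{2}\|x - x_0\|^2
\]
is convex on $\mathbb{R}^{d'}$. Subtracting $\frac{\mu}{2}\|x-x_0\|^2$ instead of $\frac{\mu}{2}\|x\|^2$ is harmless: the two differ by an affine function of $x$, which does not affect convexity.

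The key step is to split the quadratic along the orthogonal decomposition induced by $U$. Since $U$ has orthonormal columns, $UU^\top$ is the orthogonal projector onto $\mathrm{range}(U)$ and $P = I - UU^\top$ is the complementary projector. By Pythagoras, for any $v\in\mathbb{R}^{d'}$,
\[
\|v\|^2 = \|UU^\top v\|^2 + \|Pv\|^2 = \|U^\top v\|^2 + \|Pv\|^2 ,
\]
where the last equality uses $\|Uw\| = \|w\|$. Taking $v = x - x_0$, the $\|P(x-x_0)\|^2$ terms cancel and we get
\[
h(x) = \Bigl[ g\bigl(U^\top(x-x_0)\bigr) - \frac{\mu}{2}\|U^\top(x-x_0)\|^2 \Bigr] + \langle u_0, x - x_0\rangle .
\]

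Now define $\tilde g(y) := g(y) - \frac{\mu}{2}\|y\|^2$ on $\mathbb{R}^d$, which is convex because $g$ is $\mu$-strongly convex. The bracketed term is then $\tilde g$ composed with the affine map $x\mapsto U^\top(x-x_0)$, and a convex function composed with an affine map is convex. Adding the linear term $\langle u_0, x-x_0\rangle$ preserves convexity, so $h$ is convex and $g_U$ is $\mu$-strongly convex.

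For completeness, closedness and properness carry over exactly as in the preceding lemma for $f_U$. Composition with an affine map preserves closedness, and adding finite continuous functions preserves both closedness and properness. Properness also needs the domain to be nonempty: since $U^\top$ is surjective, $\mathrm{dom}\, g_U$ is nonempty because $\mathrm{dom}\, g$ is.

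The only step that needs care is the Pythagorean identity $\|v\|^2 = \|U^\top v\|^2 + \|Pv\|^2$, which relies on $U^\top U = I$. It is also the point of the $P$ term in the construction: on the directions orthogonal to $\mathrm{range}(U)$, the function $g(U^\top(x-x_0))$ is constant, so without $\frac{\mu}{2}\|P(x-x_0)\|^2$ the lifted function would not be strongly convex in those directions.
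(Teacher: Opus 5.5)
Your proof is correct, but it takes a different route from the paper's.

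\textbf{The paper's route.} The paper uses the first-order characterization. It fixes $x,y$ and takes an arbitrary $v\in\partial g_U(x)$. It decomposes it as $v = Us + u_0 + \mu P(x-x_0)$ with $s\in\partial g\bigl(U^\top(x-x_0)\bigr)$. It then adds the subgradient form of strong convexity for $g$ to exact expansions of the linear and quadratic terms. It finishes with $\|U^\top(y-x)\|^2+\|P(y-x)\|^2=\|y-x\|^2$.

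\textbf{Your route.} You verify the defining property directly, namely that $g_U-\frac{\mu}{2}\|\cdot\|^2$ is convex. The same Pythagorean identity cancels the $P$-terms. This turns $g_U-\frac{\mu}{2}\|\cdot-x_0\|^2$ into $\bigl(g-\frac{\mu}{2}\|\cdot\|^2\bigr)$ composed with an affine map, plus a linear term.

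\textbf{Comparison.} Both arguments rest on the same orthogonal splitting, but yours is purely zeroth-order and needs no subdifferential calculus. The paper's decomposition of $v$ silently uses the chain rule $\partial\bigl(g\circ U^\top\bigr)(w)=U\,\partial g(U^\top w)$, which is valid here because $U^\top$ is surjective, together with the sum rule. It also relies on the equivalence, quoted in the preliminaries, between the subgradient inequality and the definition for extended-valued functions. In return, the paper's route gives an explicit description of $\partial g_U$. The paper reuses this later when checking $u_\star\in\partial g(x_\star)$ in the proof of the general lower bound.

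Your remarks on closedness and properness are also correct and worth keeping. The final lower-bound theorem needs $g_U$ to be closed and proper, and this lemma's statement does not assert that itself.
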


\begin{proof}
Fix $x,y\in\mathbb{R}^{d'}$ and let $v\in \partial g_U(x)$ be arbitrary and write it uniquely as
\[
v = Us+u_0 + \mu P(x-x_0)
\]
where $s\in\partial g\bigl(U^\top(x-x_0)\bigr)$.
Since $g$ is $\mu$-strongly convex, we have
\[
g\bigl(U^\top(y-x_0)\bigr)
\ge
g\bigl(U^\top(x-x_0)\bigr)
+
\left\langle s,\,U^\top(y-x)\right\rangle
+
\frac{\mu}{2}\|U^\top(y-x)\|^2.
\]
Also,
\[
\langle u_0,y-x_0\rangle
=
\langle u_0,x-x_0\rangle+\langle u_0,y-x\rangle,
\]
and
\[
\frac{\mu}{2}\|P(y-x_0)\|^2
=
\frac{\mu}{2}\|P(x-x_0)\|^2
+
\mu\langle P(x-x_0),y-x\rangle
+
\frac{\mu}{2}\|P(y-x)\|^2.
\]
Adding these identities and inequalities yields
\begin{align*}
g_U(y)
&\ge
g_U(x)
+
\langle Us+u_0+\mu P(x-x_0),\,y-x\rangle \\
&\qquad
+
\frac{\mu}{2}\|U^\top(y-x)\|^2
+
\frac{\mu}{2}\|P(y-x)\|^2.
\end{align*}
Since
\[
\|U^\top(y-x)\|^2+\|P(y-x)\|^2=\|y-x\|^2,
\]
we obtain
\begin{align*}
g_U(y)
&\ge
g_U(x)+\langle Us+u_0+\mu P(x-x_0),\,y-x\rangle+\frac{\mu}{2}\|y-x\|^2\\
&=g_U(x)+\langle v,\,y-x\rangle+\frac{\mu}{2}\|y-x\|^2.
\end{align*}
Therefore $g_U$ is $\mu$-strongly convex. \qed
\end{proof}

From now on, $U \in \mathbb{R}^{d' \times d}$ has orthonormal columns, $x_0, u_0$ are fixed, and $f_U$ and $g_U$ are defined as in the previous two lemmas. Next, we derive explicit formulas for the proximal mappings of the lifted functions.

\begin{lemma}\label{lem:fprox}
For every $z\in\mathbb{R}^{d'}$ and every $\gamma>0$, assume that $U^\top u_0=0$. Write
\[
\tilde z:=U^\top(z-x_0),
\qquad
p_z:=P(z-x_0),
\]
where $P:=I-UU^\top$. Then
\[
\mathrm{prox}_{\gamma f_U}(z)
=
x_0+U\,\mathrm{prox}_{\gamma f}(\tilde z)+p_z+\gamma u_0,
\]
and therefore
\[
z-\mathrm{prox}_{\gamma f_U}(z)
=
U\bigl(\tilde z-\mathrm{prox}_{\gamma f}(\tilde z)\bigr)-\gamma u_0.
\]
In particular,
\[
U^\top\!\left(z-\mathrm{prox}_{\gamma f_U}(z)\right)
=
\tilde z-\mathrm{prox}_{\gamma f}(\tilde z).
\]
\end{lemma}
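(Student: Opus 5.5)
The plan is to verify the claimed formula directly through the optimality condition of the proximal problem. Since $f_U$ is closed, proper, and convex by the preceding lemma, $\mathrm{prox}_{\gamma f_U}(z)$ is the unique point $p$ satisfying
\[
0\in \partial f_U(p)+\tfrac{1}{\gamma}(p-z).
\]
We therefore take the candidate $p:=x_0+U\,\mathrm{prox}_{\gamma f}(\tilde z)+p_z+\gamma u_0$ and check that it satisfies this inclusion; uniqueness then yields the identity.

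The first step is to compute $\partial f_U$. Because $x\mapsto -\langle u_0,x-x_0\rangle$ is affine and finite everywhere, the sum rule gives $\partial f_U(x)=\partial\bigl(f(U^\top(\cdot-x_0))\bigr)(x)-u_0$. For the composition with the affine surjection $x\mapsto U^\top(x-x_0)$, we use the chain rule $\partial (f\circ A)(x)=A^\top\partial f(Ax)$, which holds with equality here. The reason is that $U^\top\colon\mathbb{R}^{d'}\to\mathbb{R}^d$ is onto, since $U$ has orthonormal columns, so its range meets the relative interior of $\mathrm{dom}\,f$ and no additional qualification condition is needed. Concretely, if $q:=\mathrm{prox}_{\gamma f}(\tilde z)$ and $s=(\tilde z-q)/\gamma\in\partial f(q)$, then $Us$ is a subgradient of $f(U^\top(\cdot-x_0))$ at any $x$ with $U^\top(x-x_0)=q$. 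This inclusion alone suffices, because we only need to exhibit one element of the subdifferential. It can be checked straight from the subgradient inequality: $f(U^\top(y-x_0))\ge f(q)+\langle s,U^\top(y-x)\rangle=f(U^\top(x-x_0))+\langle Us,y-x\rangle$.

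Next we evaluate at the candidate. Using $U^\top U=I$, $U^\top P=0$ and the assumption $U^\top u_0=0$, we get $U^\top(p-x_0)=q$. Hence $Us-u_0\in\partial f_U(p)$. It then remains to confirm that $Us-u_0+\tfrac1\gamma(p-z)=0$. Decompose $z-x_0=U\tilde z+p_z$, which holds because $UU^\top+P=I$. This gives $p-z=U(q-\tilde z)+\gamma u_0=-\gamma Us+\gamma u_0$, so the sum vanishes exactly. This proves the first formula.

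The remaining claims are bookkeeping. First, $z-\mathrm{prox}_{\gamma f_U}(z)=(x_0+U\tilde z+p_z)-(x_0+Uq+p_z+\gamma u_0)=U(\tilde z-q)-\gamma u_0$. Applying $U^\top$ and again using $U^\top U=I$ and $U^\top u_0=0$ gives the last identity. The only delicate point is the use of the subdifferential chain rule, and the direct subgradient-inequality verification above sidesteps it, so I expect no real obstacle.
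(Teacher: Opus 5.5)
Your proof is correct, but it takes a different route from the paper.

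The paper derives the formula rather than verifying it. It writes every $x$ as $x_0+U\tilde x+p$ with $p\in\mathrm{range}(P)$, and uses $U^\top u_0=0$ (so that $u_0\in\mathrm{range}(P)$) together with $\mathrm{range}(U)\perp\mathrm{range}(P)$. This splits the proximal objective into a $\tilde x$-block $f(\tilde x)+\frac{1}{2\gamma}\|\tilde x-\tilde z\|^2$ and a $p$-block $-\langle u_0,p\rangle+\frac{1}{2\gamma}\|p-p_z\|^2$. Minimizing each block separately gives $\tilde x=\mathrm{prox}_{\gamma f}(\tilde z)$ and $p=p_z+\gamma u_0$.

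You instead take the candidate from the statement and certify it through the first-order optimality condition. You need only the easy inclusion $U\partial f(q)-u_0\subseteq\partial f_U(p)$, which you check directly from the subgradient inequality, plus strict convexity of the proximal objective for uniqueness.

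The paper's argument uses no subdifferential calculus and would produce the formula even if it were not known in advance. Yours is a clean verification that pinpoints where $U^\top u_0=0$ enters, namely to get $U^\top(p-x_0)=q$, and it avoids any constraint-qualification question in the chain rule. Your remark that $U^\top$ is onto is true but, as you say, not needed.
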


\begin{proof}
Since $U^\top U=I_d$ and $P=I-UU^\top$, every $x\in\mathbb{R}^{d'}$ admits a unique decomposition
\[
x=x_0+U\tilde x+p,
\qquad
\tilde x\in\mathbb{R}^d,\quad p\in\mathrm{range}(P),
\]
and we write
\[
z=x_0+U\tilde z+p_z,
\qquad
\tilde z:=U^\top(z-x_0),\quad p_z:=P(z-x_0).
\]
Since $U^\top u_0=0$, we have $u_0\in\mathrm{range}(P)$, and hence
\[
\langle u_0,x-x_0\rangle
=
\langle u_0,U\tilde x+p\rangle
=
\langle u_0,p\rangle.
\]
Using $\mathrm{range}(U)\perp\mathrm{range}(P)$, we obtain
\[
\|x-z\|^2
=
\|\tilde x-\tilde z\|^2+\|p-p_z\|^2,
\]
and therefore
\[
f_U(x)+\frac{1}{2\gamma}\|x-z\|^2
=
f(\tilde x)
-\langle u_0,p\rangle
+\frac{1}{2\gamma}\|\tilde x-\tilde z\|^2
+\frac{1}{2\gamma}\|p-p_z\|^2.
\]
Hence the minimization separates. Minimizing with respect to $\tilde x$, we get
\[
\tilde x=\mathrm{prox}_{\gamma f}(\tilde z),
\]
while the minimizing $p$ satisfies
\[
\frac{1}{\gamma}(p-p_z)-u_0=0,
\]
so
\[
p=p_z+\gamma u_0.
\]
This proves that
\[
\mathrm{prox}_{\gamma f_U}(z)
=
x_0+U\,\mathrm{prox}_{\gamma f}(\tilde z)+p_z+\gamma u_0.
\]
Subtracting from $z=x_0+U\tilde z+p_z$ gives
\[
z-\mathrm{prox}_{\gamma f_U}(z)
=
U\bigl(\tilde z-\mathrm{prox}_{\gamma f}(\tilde z)\bigr)-\gamma u_0.
\]
Finally, since $U^\top u_0=0$, applying $U^\top$ yields
\[
U^\top\!\left(z-\mathrm{prox}_{\gamma f_U}(z)\right)
=
\tilde z-\mathrm{prox}_{\gamma f}(\tilde z).
\]
\qed
\end{proof}

\begin{lemma}\label{lem:gprox}
For every $z\in\mathbb{R}^{d'}$ and every $\gamma>0$, assume that $U^\top u_0=0$. Write
\[
\tilde z:=U^\top(z-x_0),
\qquad
p_z:=P(z-x_0),
\]
where $P:=I-UU^\top$. Then
\[
\mathrm{prox}_{\gamma g_U}(z)
=
x_0+U\,\mathrm{prox}_{\gamma g}(\tilde z)+\frac{1}{1+\gamma\mu}(p_z-\gamma u_0),
\]
and
\[
z-\mathrm{prox}_{\gamma g_U}(z)
=
U\bigl(\tilde z-\mathrm{prox}_{\gamma g}(\tilde z)\bigr)
+
\frac{\gamma\mu}{1+\gamma\mu}\,p_z
+
\frac{\gamma}{1+\gamma\mu}\,u_0.
\]
In particular,
\[
U^\top\!\left(z-\mathrm{prox}_{\gamma g_U}(z)\right)
=
\tilde z-\mathrm{prox}_{\gamma g}(\tilde z).
\]
\end{lemma}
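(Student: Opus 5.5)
I will mirror the proof of Lemma~\ref{lem:fprox}: decompose the space orthogonally, check that the proximal subproblem separates into a $U$-part and a $P$-part, and solve each part explicitly.

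\textbf{Step 1 (coordinates).} Every $x\in\mathbb{R}^{d'}$ is written uniquely as $x=x_0+U\tilde x+p$ with $\tilde x\in\mathbb{R}^d$ and $p\in\mathrm{range}(P)$. Likewise $z=x_0+U\tilde z+p_z$. Because $U^\top U=I_d$, we have $U^\top(x-x_0)=\tilde x$ and $P(x-x_0)=p$. Because $U^\top u_0=0$, we have $u_0\in\mathrm{range}(P)$, so $\langle u_0,x-x_0\rangle=\langle u_0,p\rangle$. Since $\mathrm{range}(U)\perp\mathrm{range}(P)$,
\[
\|x-z\|^2=\|\tilde x-\tilde z\|^2+\|p-p_z\|^2 .
\]

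\textbf{Step 2 (separation).} Substituting into the proximal objective gives
\[
g_U(x)+\frac{1}{2\gamma}\|x-z\|^2
=
\Big[g(\tilde x)+\frac{1}{2\gamma}\|\tilde x-\tilde z\|^2\Big]
+
\Big[\langle u_0,p\rangle+\frac{\mu}{2}\|p\|^2+\frac{1}{2\gamma}\|p-p_z\|^2\Big].
\]
The two brackets can be minimized independently. The first is minimized at $\tilde x=\prox_{\gamma g}(\tilde z)$. The second is a strongly convex quadratic; its stationarity condition is $u_0+\mu p+\frac{1}{\gamma}(p-p_z)=0$, which gives
\[
p=\frac{p_z-\gamma u_0}{1+\gamma\mu}.
\]
This $p$ lies in $\mathrm{range}(P)$ because both $p_z$ and $u_0$ do. Hence minimizing over $p\in\mathbb{R}^{d'}$ and minimizing over $\mathrm{range}(P)$ agree, and the decomposition in Step 1 remains consistent. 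This establishes the formula for $\prox_{\gamma g_U}(z)$.

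\textbf{Step 3 (residual).} Subtracting the prox from $z=x_0+U\tilde z+p_z$ gives
\[
z-\prox_{\gamma g_U}(z)
=
U\bigl(\tilde z-\prox_{\gamma g}(\tilde z)\bigr)
+\Big(1-\frac{1}{1+\gamma\mu}\Big)p_z
+\frac{\gamma}{1+\gamma\mu}u_0 ,
\]
and $1-\frac{1}{1+\gamma\mu}=\frac{\gamma\mu}{1+\gamma\mu}$, which is the stated residual. Applying $U^\top$ kills the last two terms, since $U^\top P=0$ and $U^\top u_0=0$, and $U^\top U=I$ leaves exactly $\tilde z-\prox_{\gamma g}(\tilde z)$.

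\textbf{Main point of care.} The one step needing attention is the separation in Step 2, namely confirming that the optimal $p$ lies in $\mathrm{range}(P)$. Uniqueness of the minimizer is not an issue, since $g_U$ is strongly convex by the preceding lemma. The remaining steps are routine algebra.
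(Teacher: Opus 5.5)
Your proposal is correct and follows the paper's proof step for step: the decomposition $x=x_0+U\tilde x+p$, the separation of the proximal objective, the stationarity condition $u_0+\mu p+\frac{1}{\gamma}(p-p_z)=0$, and then subtracting from $z$ and applying $U^\top$. You also check explicitly that the unconstrained minimizer in $p$ already lies in $\mathrm{range}(P)$, which the paper leaves implicit, and that check is correct.
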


\begin{proof}
Again write
\[
x=x_0+U\tilde x+p,
\qquad
z=x_0+U\tilde z+p_z,
\qquad
p,p_z\in\mathrm{range}(P).
\]
Since $U^\top u_0=0$, we have $u_0\in\mathrm{range}(P)$, and therefore
\[
\langle u_0,x-x_0\rangle
=
\langle u_0,U\tilde x+p\rangle
=
\langle u_0,p\rangle.
\]
Using $\mathrm{range}(U)\perp \mathrm{range}(P)$, we obtain
\[
\|x-z\|^2
=
\|\tilde x-\tilde z\|^2+\|p-p_z\|^2.
\]
Hence
\begin{align*}
g_U(x)+\frac{1}{2\gamma}\|x-z\|^2
&=
g(\tilde x)
+
\langle u_0,p\rangle
+
\frac{\mu}{2}\|p\|^2
+
\frac{1}{2\gamma}\|\tilde x-\tilde z\|^2
+
\frac{1}{2\gamma}\|p-p_z\|^2.
\end{align*}
The minimization separates. Minimizing with respect to $\tilde x$, we get
\[
\tilde x=\mathrm{prox}_{\gamma g}(\tilde z),
\]
while the minimizing $p$ satisfies
\[
u_0+\mu p+\frac{1}{\gamma}(p-p_z)=0.
\]
Equivalently,
\[
(1+\gamma\mu)p=p_z-\gamma u_0,
\]
so
\[
p=\frac{1}{1+\gamma\mu}(p_z-\gamma u_0).
\]
This proves that
\[
\mathrm{prox}_{\gamma g_U}(z)
=
x_0+U\,\mathrm{prox}_{\gamma g}(\tilde z)+\frac{1}{1+\gamma\mu}(p_z-\gamma u_0).
\]
Subtracting from
\[
z=x_0+U\tilde z+p_z
\]
gives
\begin{align*}
z-\mathrm{prox}_{\gamma g_U}(z)
&=
U\bigl(\tilde z-\mathrm{prox}_{\gamma g}(\tilde z)\bigr)
+
p_z-\frac{1}{1+\gamma\mu}(p_z-\gamma u_0) \\
&=
U\bigl(\tilde z-\mathrm{prox}_{\gamma g}(\tilde z)\bigr)
+
\frac{\gamma\mu}{1+\gamma\mu}\,p_z
+
\frac{\gamma}{1+\gamma\mu}\,u_0.
\end{align*}
Finally, since $U^\top u_0=0$ and $U^\top p_z=0$, applying $U^\top$ yields
\[
U^\top\!\left(z-\mathrm{prox}_{\gamma g_U}(z)\right)
=
\tilde z-\mathrm{prox}_{\gamma g}(\tilde z).  
\]\qed
\end{proof}

These identities allow us to match proximal evaluations in the lifted space with projected proximal evaluations in the original space. We are now ready to carry out the resisting-oracle construction.

\begin{lemma}\label{lem:resisting}
Let $N>0$ be a positive integer. Assume $\mu>0$, $d'\ge d+2N+2$, and let $\textbf{A}$ be any deterministic
$N$-step prox-prox method with initial pair
$(x_0,u_0)\in\mathbb{R}^{d'}\times\mathbb{R}^{d'}$. Let
\[
f\colon\mathbb{R}^d\to\mathbb{R}\cup\{\infty\}
\quad\text{and}\quad
g\colon\mathbb{R}^d\to\mathbb{R}\cup\{\infty\}
\]
be such that $f$ is closed, proper, and convex and $g$ is
closed, proper, and $\mu$-strongly convex. Then there exists a matrix
\[
U\in\mathbb{R}^{d'\times d}
\]
with orthonormal columns such that $U^\top u_0=0$, and if
\[
\textbf{A}\bigl((x_0,u_0),(f_U,g_U)\bigr)
=
\bigl\{\bigl\{(z_i,\delta_i,\gamma_i)\bigr\}_{i=0}^{2N-1},\,x_{N}\big\},
\]
and we define the projected iterates
\[
\tilde z_i:=U^\top(z_i-x_0),
\qquad
\tilde x_{N}:=U^\top(x_{N}-x_0),
\]
then the sequence
\[
\big\{\bigl\{(\tilde z_i,\delta_i,\gamma_i)\bigr\}_{i=0}^{2N-1},\,\tilde x_{N}\big\}
\]
is prox-prox zero-respecting with respect to $(f,g)$.
\end{lemma}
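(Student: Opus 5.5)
The plan is to build the columns of $U$ one at a time, adaptively, while simulating $\mathbf A$ on $(f_U,g_U)$. This is the standard resisting-oracle construction (as in Carmon et al.\ and the OptISTA lower bound). The structural input is Lemmas~\ref{lem:fprox} and~\ref{lem:gprox}. They show that the residual $z-\mathrm{prox}_{\gamma h_U}(z)$ for $h\in\{f,g\}$ has $U$-component exactly $U(\tilde z-\mathrm{prox}_{\gamma h}(\tilde z))$. Its remaining component lies in $\mathrm{span}\{p_z,u_0\}$. Hence the oracle answer at $z_i$ depends on $U$ only through $U^\top(z_i-x_0)$ and through those columns $u_j$ of $U$ that are actually ``touched'' by the current support.

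Concretely, write the columns of $U$ as $u_1,\dots,u_d$, indexed by the fixed coordinate basis of $\mathbb R^d$. I will maintain a set $\mathcal S_i\subseteq\{1,\dots,d\}$ of revealed indices. It will be the union $\bigcup_{k<i}\mathrm{supp}(\tilde d_k)$, with $\tilde d_k=\tilde z_k-\mathrm{prox}_{\gamma_k h}(\tilde z_k)$, and $u_j$ will be fixed exactly for $j\in\mathcal S_i$. The invariant is that every $u_j$ with $j\notin\mathcal S_i$ will later be chosen orthogonal to
\[
V_i:=\mathrm{span}\{u_0,\ z_0-x_0,\dots,z_i-x_0,\ u_j \,(j\in\mathcal S_i)\}.
\]
The step from $i$ to $i+1$ goes as follows. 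The query $(z_i,\delta_i,\gamma_i)$ is determined by $\mathbf A_i$ from past answers, and these are already determined by the fixed columns via the prox formulas. Because the unrevealed columns will be orthogonal to $z_i-x_0$, we get $\tilde z_i=U^\top(z_i-x_0)$ supported in $\mathcal S_i$, and its entries are computable from the already-fixed columns. This is precisely the zero-respecting condition $\mathrm{supp}(\tilde z_i)\subseteq\bigcup_{k<i}\mathrm{supp}(\tilde d_k)$. We then compute $\tilde d_i$ in $\mathbb R^d$ using only $f$ or $g$. For each new index $j\in\mathrm{supp}(\tilde d_i)\setminus\mathcal S_i$ we choose $u_j$ as a unit vector orthogonal to $V_i$ and to the previously chosen columns. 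We enlarge $\mathcal S_{i+1}$ accordingly and return the lifted answer. The formula in Lemma~\ref{lem:fprox}/\ref{lem:gprox} for that answer uses only $p_z$, $u_0$, and $U\tilde d_i$. The vector $p_z=P(z_i-x_0)$ looks like it depends on the whole $U$, but it equals $(z_i-x_0)-U\tilde z_i$. Since $\tilde z_i$ is supported on $\mathcal S_i$, this is $(z_i-x_0)-\sum_{j\in\mathcal S_i}(\tilde z_i)_ju_j$, which involves only fixed columns. So the answer, and hence the whole future trajectory, is consistent with any completion of $U$ satisfying the invariant.

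After the $2N$ queries, the output $x_N$ is determined. We finish by choosing the remaining columns orthonormal and orthogonal to $V_{2N}\ni x_N-x_0$. This gives $\mathrm{supp}(\tilde x_N)\subseteq\bigcup_{k\le 2N-1}\mathrm{supp}(\tilde d_k)$, and $U^\top u_0=0$ holds because $u_0\in V_0$.

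The main obstacle is the dimension count, and I would check it carefully. Each chosen column must avoid a subspace of dimension at most $1+(i+1)+|\mathcal S_i|$ plus the number of earlier-chosen columns, and eventually at most $(2N+1)+d$ constraints. With $d'\ge d+2N+2$ this leaves room throughout. One must also confirm that the lemmas' formulas were derived for a fixed $U$. They apply verbatim to the final $U$, because that $U$ agrees with the partial columns wherever they were used, so the simulated trajectory is the true trajectory of $\mathbf A$ on $(f_U,g_U)$. The remaining work is an induction over $i$ verifying this consistency together with the invariant.
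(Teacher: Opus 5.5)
Your proposal is correct and follows essentially the same route as the paper. The paper likewise builds the revealed index sets $I_i=\bigcup_{k<i}\mathrm{supp}(\tilde d_k)$ (your $\mathcal S_i$) adaptively and chooses each newly revealed column orthogonal to $\mathrm{span}(\{u_0,z_0-x_0,\dots,z_i-x_0\}\cup\{v_j\}_{j\in I_i})$. It computes the lifted oracle answer from $q_i=(z_i-x_0)-\sum_{j\in I_i}\langle v_j,z_i-x_0\rangle v_j$ (your expression for $p_z$), verifies by induction that this simulated transcript is the true run on $(f_U,g_U)$, and uses the same dimension count that forces $d'\ge d+2N+2$.

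The only correction is notational: the final subspace should be $\mathrm{span}(\{u_0,z_0-x_0,\dots,z_{2N-1}-x_0,x_N-x_0\}\cup\{u_j\}_{j\in\mathcal S_{2N}})$ rather than $V_{2N}$ as literally defined, since there is no query $z_{2N}$.
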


\begin{proof}
Let $e_0,\dots,e_{d-1}$ be the standard basis of $\mathbb R^d$.
First, we recursively construct
\[
I_0\subseteq I_1\subseteq\cdots\subseteq I_{2N}\subseteq\{0,\dots,d-1\},
\]
an orthonormal family $\{v_j\}_{j\in I_{2N}}\subseteq\mathbb R^{d'}$, and a method trajectory
\[
(z_0,\delta_0,\gamma_0,y_0),\dots,(z_{2N-1},\delta_{2N-1},\gamma_{2N-1},y_{2N-1}),
\]
where $y_i\in\mathbb R^{d'}$ is intended to be the output of the $i$-th prox call.

We begin with
\[
I_0:=\varnothing.
\]
Next, fix $i\in\{0,\dots,2N-1\}$, and suppose that $I_i$, the vectors
$\{v_j\}_{j\in I_i}$, and the previous trajectory
\[
(z_0,\delta_0,\gamma_0,y_0),\dots,(z_{i-1},\delta_{i-1},\gamma_{i-1},y_{i-1})
\]
have already been constructed. Since $\mathbf A$ is deterministic, the trajectory
determines the next query triple $(z_i,\delta_i,\gamma_i)$. Define
\[
\bar z_i:=\sum_{j\in I_i}\langle v_j,z_i-x_0\rangle e_j\in\mathbb R^d,
\]
so that $\mathrm{supp}(\bar z_i)\subseteq I_i$. Next define
\[
\bar y_i:=
\begin{cases}
\mathrm{prox}_{\gamma_i f}(\bar z_i), & \delta_i=0,\\[1mm]
\mathrm{prox}_{\gamma_i g}(\bar z_i), & \delta_i=1,
\end{cases}
\]
and the corresponding residual
\[
\bar d_i:=\bar z_i-\bar y_i.
\]
Set
\[
I_{i+1}:=I_i\cup \mathrm{supp}(\bar d_i).
\]
Since $\bar y_i=\bar z_i-\bar d_i$, we also have
\[
\mathrm{supp}(\bar y_i)\subseteq I_{i+1}.
\]
We now choose the new vectors
\[
\{v_j\}_{j\in I_{i+1}\setminus I_i}
\]
as an orthonormal family in $S_i^\perp$, where
\[
S_i:=\operatorname{span}\Bigl(\{u_0,z_0-x_0,\dots,z_i-x_0\}\cup\{v_j\}_{j\in I_i}\Bigr).
\]
This is possible because
\[
\dim S_i\le (i+2)+|I_i|,
\]
hence
\[
\dim S_i^\perp
\ge d'-(i+2)-|I_i|
\ge d-|I_i|
\ge |I_{i+1}\setminus I_i|.
\]
Now define
\[
q_i:=(z_i-x_0)-\sum_{j\in I_i}\langle v_j,z_i-x_0\rangle v_j,
\]
and $y_i\in\mathbb R^{d'}$ by
\[
y_i:=
\begin{cases}
\displaystyle
x_0+\sum_{j\in I_{i+1}}(\bar y_i)_j\,v_j+q_i+\gamma_i u_0,
& \delta_i=0,\\[2mm]
\displaystyle
x_0+\sum_{j\in I_{i+1}}(\bar y_i)_j\,v_j+\frac{1}{1+\gamma_i\mu}(q_i-\gamma_i u_0),
& \delta_i=1.
\end{cases}
\]
This completes the recursive construction for $i=0,\dots,2N-1$. Since $\mathbf A$ is deterministic, this transcript uniquely determines the final output $x_{N}\in\mathbb R^{d'}$. Then choose the orthonormal vectors indexed by $j\notin I_{2N}$ in the orthonormal complement of
\[
S_{2N+1}:=\operatorname{span}\Bigl(\{u_0,z_0-x_0,\dots,z_{2N-1}-x_0,\ x_{N}-x_0\}\cup\{v_j\}_{j\in I_{2N}}\Bigr).
\]
This is possible because
\[
\dim S_{2N+1}\le (2N+2)+|I_{2N}|,
\]
so
\[
\dim S_{2N+1}^\perp\ge d'-(2N+2)-|I_{2N}|\ge d-|I_{2N}|.
\]
Now set
\[
U:=[v_0|\cdots|v_{d-1}]\in\mathbb R^{d'\times d},
\qquad
P:=I-UU^\top.
\]
By construction, each column of $U$ is orthogonal to $u_0$, and therefore $U^\top u_0=0$.

For the next step, we claim that if $\mathbf A$ is run on $(f_U,g_U)$, then after $i$ oracle calls its trajectory is exactly
\[
(z_0,\delta_0,\gamma_0,y_0),\dots,(z_{i-1},\delta_{i-1},\gamma_{i-1},y_{i-1}),
\qquad i=0,\dots,2N,
\]
and consequently, its final output is $x_{N}$. We prove this by induction on $i$. For $i=0$, we only have the initial state $(x_0,u_0)$ and empty transcript, so the claim is immediate. Now assume the claim holds for some $i\in\{0,\dots,2N-1\}$.
Then $\mathbf A$ run on $(f_U,g_U)$ has seen exactly the same trajectory
\[
(z_0,\delta_0,\gamma_0,y_0),\dots,(z_{i-1},\delta_{i-1},\gamma_{i-1},y_{i-1})
\]
as in the recursive construction. Hence, its next query is exactly $(z_i,\delta_i,\gamma_i)$.

Now for every $j\notin I_{i}$, the vector $v_j$ is orthogonal to $z_i-x_0$, so $\langle v_j,z_i-x_0\rangle=0$.
Therefore,
\[
U^\top(z_i-x_0)=\bar z_i,
\qquad
P(z_i-x_0)=q_i.
\]
Now we apply Lemma~\ref{lem:fprox} and Lemma~\ref{lem:gprox}. If $\delta_i=0$, then
\begin{align*}
\mathrm{prox}_{\gamma_i f_U}(z_i)
&=
x_0+U\,\mathrm{prox}_{\gamma_i f}\bigl(U^\top(z_i-x_0)\bigr)+P(z_i-x_0)+\gamma_i u_0\\
&=
x_0+U\,\mathrm{prox}_{\gamma_i f}(\bar z_i)+q_i+\gamma_i u_0.
\end{align*}
Since $\bar y_i=\mathrm{prox}_{\gamma_i f}(\bar z_i)$ and
$\mathrm{supp}(\bar y_i)\subseteq I_{i+1}$, this gives
\[
\mathrm{prox}_{\gamma_i f_U}(z_i)
=
x_0+\sum_{j\in I_{i+1}}(\bar y_i)_j\,v_j+q_i+\gamma_i u_0
=
y_i.
\]
If $\delta_i=1$, then
\begin{align*}
\mathrm{prox}_{\gamma_i g_U}(z_i)
&=
x_0+U\,\mathrm{prox}_{\gamma_i g}\!\bigl(U^\top(z_i-x_0)\bigr)
+\frac{1}{1+\gamma_i\mu}\bigl(P(z_i-x_0)-\gamma_i u_0\bigr)\\
&=
x_0+U\,\mathrm{prox}_{\gamma_i g}(\bar z_i)
+\frac{1}{1+\gamma_i\mu}(q_i-\gamma_i u_0).
\end{align*}
Since $\bar y_i=\mathrm{prox}_{\gamma_i g}(\bar z_i)$ and
$\mathrm{supp}(\bar y_i)\subseteq I_{i+1}$, this gives
\[
\mathrm{prox}_{\gamma_i g_U}(z_i)
=
x_0+\sum_{j\in I_{i+1}}(\bar y_i)_j\,v_j+\frac{1}{1+\gamma_i\mu}(q_i-\gamma_i u_0)
=
y_i.
\]

Thus the true $i$-th oracle output is exactly $y_i$, which proves the induction step.
Hence the actual run of $\mathbf A$ on $(f_U,g_U)$ has exactly the recursively constructed $2N$-step transcript. Since the final output depends only on this trajectory, the final output is exactly $x_{N}$.

We now verify the zero-respecting property.
For each $i=0,\dots,2N-1$, define
\[
\tilde d_i:=
\begin{cases}
\tilde z_i-\mathrm{prox}_{\gamma_i f}(\tilde z_i), & \delta_i=0,\\
\tilde z_i-\mathrm{prox}_{\gamma_i g}(\tilde z_i), & \delta_i=1.
\end{cases}
\]
Since $y_i=\mathrm{prox}_{\gamma_i f_U}(z_i)$ when $\delta_i=0$ and
$y_i=\mathrm{prox}_{\gamma_i g_U}(z_i)$ when $\delta_i=1$, Lemma~\ref{lem:fprox}
and Lemma~\ref{lem:gprox} give
\[
\tilde d_i = U^\top(z_i-y_i).
\]
Next, we show that
\[
U^\top(y_i-x_0)=\bar y_i.
\]
If $\delta_i=0$, then by construction,
\[
y_i=
x_0+\sum_{j\in I_{i+1}}(\bar y_i)_j\,v_j+q_i+\gamma_i u_0.
\]
Multiplying by $U^\top$, and using $U^\top q_i=0$ and $U^\top u_0=0$, we obtain
\[
U^\top(y_i-x_0)=\bar y_i.
\]
If $\delta_i=1$, then
\[
y_i=
x_0+\sum_{j\in I_{i+1}}(\bar y_i)_j\,v_j+\frac{1}{1+\gamma_i\mu}(q_i-\gamma_i u_0),
\]
and again $U^\top q_i=0$ and $U^\top u_0=0$ imply
\[
U^\top(y_i-x_0)=\bar y_i.
\]
Since also
\[
U^\top(z_i-x_0)=\bar z_i,
\]
we conclude that
\[
\tilde d_i
=
U^\top(z_i-y_i)
=
U^\top(z_i-x_0)-U^\top(y_i-x_0)
=
\bar z_i-\bar y_i
=
\bar d_i.
\]
Because $I_0=\varnothing$ and
\[
I_{i+1}=I_i\cup\mathrm{supp}(\bar d_i)
      =I_i\cup\mathrm{supp}(\tilde d_i),
\]
it follows by induction that
\[
I_i=\bigcup_{k=0}^{i-1}\mathrm{supp}(\tilde d_k),
\qquad i=0,\dots,2N.
\]
Moreover,
\[
\mathrm{supp}(\tilde z_i)
=
\mathrm{supp}(\bar z_i)
\subseteq I_i
=
\bigcup_{k=0}^{i-1}\mathrm{supp}(\tilde d_k),
\qquad i=0,\dots,2N-1.
\]
Finally, for every $j\notin I_{2N}$ we have $v_j\in S_{2N+1}^\perp$ and
$x_{N}-x_0\in S_{2N+1}$, so
\[
\langle v_j,x_{N}-x_0\rangle=0.
\]
Therefore
\[
\mathrm{supp}(\tilde x_{N})
=
\mathrm{supp}\bigl(U^\top(x_{N}-x_0)\bigr)
\subseteq I_{2N}
=
\bigcup_{k=0}^{2N-1}\mathrm{supp}(\tilde d_k).
\]
Hence
\[
\mathrm{supp}(\tilde x_{N})
\subseteq
\bigcup_{k=0}^{2N-1}\mathrm{supp}(\tilde d_k),
\]
which is exactly the prox-prox zero-respecting property. \qed
\end{proof}

Combining the resisting-oracle lemma with the zero-respecting property, we obtain the desired lower bound for arbitrary deterministic prox-prox methods.

\begin{proof}
By Theorem~\ref{thm:lbspan} with initial pair $(0,0)$, there exist a closed, proper, and convex function
$\tilde f\colon \mathbb R^{2N+2}\to \mathbb R\cup\{\infty\}$ and a closed, proper, and $\mu$-strongly convex function
$\tilde g\colon \mathbb R^{2N+2}\to \mathbb R \cup\{\infty\}$
with
\[
\tilde x_\star\in \mathrm{argmin}(\tilde f+\tilde g),
\qquad
\tilde u_\star\in \partial \tilde g(\tilde x_\star), \, -\tilde u_\star \in \partial \tilde f(\tilde x_\star).
\]
Since $d\ge 4N+4=(2N+2)+(2N+2)$, by Lemma~\ref{lem:resisting} there exists a matrix
\[
U\in \mathbb R^{d\times(2N+2)}
\]
with orthonormal columns such that $U^\top u_0=0$, and if
\[
\mathbf A\bigl((x_0,u_0),(\tilde f_U,\tilde g_U)\bigr)
=
\big\{\bigl\{(z_i,\delta_i,\gamma_i)\bigr\}_{i=0}^{2N-1},\,x_{N} \big\},
\]
and we define
\[
\tilde z_i:=U^\top(z_i-x_0),
\qquad
\tilde x_{N}:=U^\top(x_{N}-x_0),
\]
then the projected sequence
\[
\big\{\bigl\{(\tilde z_i,\delta_i,\gamma_i)\bigr\}_{i=0}^{2N-1},\,\tilde x_{N} \big\}
\]
is prox-prox zero-respecting with respect to $(\tilde f,\tilde g)$. Therefore, by Theorem~\ref{thm:lbspan} and Lemma~\ref{lem:zr-to-span},
\begin{equation}\label{eq:lbspan}
\|\tilde x_{N}-\tilde x_\star\|^2
\ge
\frac{\|\tilde x_\star\|^2 + \|\tilde u_\star\|^2}{1+4N^2\mu^2+4N\mu}.
\end{equation}
Now define
\[
f:=\tilde f_U,
\qquad
g:=\tilde g_U,
\]
and
\[
x_\star:=x_0+U\tilde x_\star,
\qquad
u_\star:=u_0+U\tilde u_\star.
\]
We first show that $x_\star\in \mathrm{argmin}(f+g)$.
Every $z\in\mathbb R^d$ can be written uniquely as
\[
z=x_0+Uy+p,
\qquad
y\in\mathbb R^{2N+2},\quad p\in \mathrm{range}(P),
\]
where $P:=I-UU^\top$. By definition of $f_U$ and $g_U$, we have
\[
f(z)=\tilde f(y)-\langle u_0,Uy+p\rangle,
\]
and
\[
g(z)=\tilde g(y)+\langle u_0,Uy+p\rangle+\frac{\mu}{2}\|p\|^2.
\]
Hence
\[
(f+g)(z)=\tilde f(y)+\tilde g(y)+\frac{\mu}{2}\|p\|^2.
\]
This is minimized exactly when
\[
y\in \mathrm{argmin}(\tilde f+\tilde g)
\quad\text{and}\quad
p=0.
\]
Thus
\[
x_\star=x_0+U\tilde x_\star\in \mathrm{argmin}(f+g).
\]
Next we show that $u_\star\in \partial g(x_\star)$ and $-u_\star\in \partial f(x_\star)$.
By definition,
\[
g(x)=\tilde g\bigl(U^\top(x-x_0)\bigr)+\langle u_0,x-x_0\rangle+\frac{\mu}{2}\|P(x-x_0)\|^2.
\]
Hence
\[
\partial g(x)
=
U\,\partial \tilde g\bigl(U^\top(x-x_0)\bigr)+u_0+\mu P(x-x_0).
\]
Since
\[
x_\star=x_0+U\tilde x_\star,
\]
we have
\[
U^\top(x_\star-x_0)=\tilde x_\star,
\qquad
P(x_\star-x_0)=0.
\]
Therefore
\[
\partial g(x_\star)=U\,\partial \tilde g(\tilde x_\star)+u_0.
\]
Since $\tilde u_\star\in \partial \tilde g(\tilde x_\star)$, we conclude that
\[
u_\star=u_0+U\tilde u_\star\in \partial g(x_\star).
\]
Similarly, 
\[
f(x)=\tilde f\bigl(U^\top(x-x_0)\bigr)-\langle u_0,x-x_0\rangle.
\]
So, 
\[
\partial f(x) = U\partial \tilde{f}\bigl(U^\top(x-x_0)\bigr)-u_0,
\]
and
\[
\partial f(x_\star) = U\partial \tilde{f}(\tilde x_\star) - u_0.
\]
Thus, we conclude that $-u_\star= -u_0 -U\tilde u_\star \in \partial f(x_\star)$. Putting altogether, we have
\[
\|x_0-x_\star\|^2=\|U\tilde x_\star\|^2=\|\tilde x_\star\|^2,
\]
and
\[
\|u_0-u_\star\|^2=\|U\tilde u_\star\|^2=\|\tilde u_\star\|^2.
\]
Thus
\[
\|x_0-x_\star\|^2+\|u_0-u_\star\|^2
=
\|\tilde x_\star\|^2+\|\tilde u_\star\|^2.
\]
Finally,
\[
x_{N}-x_\star
=
x_{N}-x_0-U\tilde x_\star
=
U(\tilde x_{N}-\tilde x_\star)+P(x_{N}-x_0),
\]
and the two terms on the right-hand side are orthogonal. Hence
\[
\|x_{N}-x_\star\|^2
=
\|\tilde x_{N}-\tilde x_\star\|^2+\|P(x_{N}-x_0)\|^2
\ge
\|\tilde x_{N}-\tilde x_\star\|^2.
\]
Combining with \eqref{eq:lbspan} yields
\[
\|x_{N}-x_\star\|^2
\ge
\frac{\|x_0-x_\star\|^2+\|u_0-u_\star\|^2}{1+4N^2\mu^2+4N\mu}.
\]
\qed
\end{proof}

\section{Numerical Results}
We compare FDR with FISTA, Douglas--Rachford splitting (DRS), accelerated
Chambolle--Pock (CP), and accelerated Davis--Yin splitting (DYS) on a family
of elastic-net regression problems.  Each instance has the form
\begin{equation*}
    \begin{split}
        \underset{x}{\text{minimize}} 
        \quad & \underbrace{\|Ax - b\|_2^2 + \frac{\mu}{2}\|x\|^2}_{=g(x)} + \underbrace{\lambda \|x\|_1}_{=f(x)} .  \\
    \end{split}
\end{equation*}
We randomly generated $100$ instances with $A \in \mathbb{R}^{40\times
100}$ and $b \in \mathbb{R}^{40}$ being computed as $Ax_{\mathrm{true}} + \epsilon$, where $\epsilon$ is drawn from a Gaussian with standard deviation $0.01$, and $x_{\mathrm{true}}$ is generated randomly with only $10\%$ of elements being nonzero. We use $\mu=\lambda=10^{-3}$.

Since the guarantee on FDR's squared distance to the solution is on the final iterate, $x_N$, the FDR curve in Figure~\ref{fig:elastic-net-regression} reports the terminal error \(\|x_k-x_\star\|^2\) obtained by running FDR for $N=k$ iterations. The solid curves are the median of each algorithm's performance, and the shaded region shows the interquartile range for each on the $100$ instances.

\begin{figure}[t]
    \centering
    \includegraphics[width=0.75\textwidth]{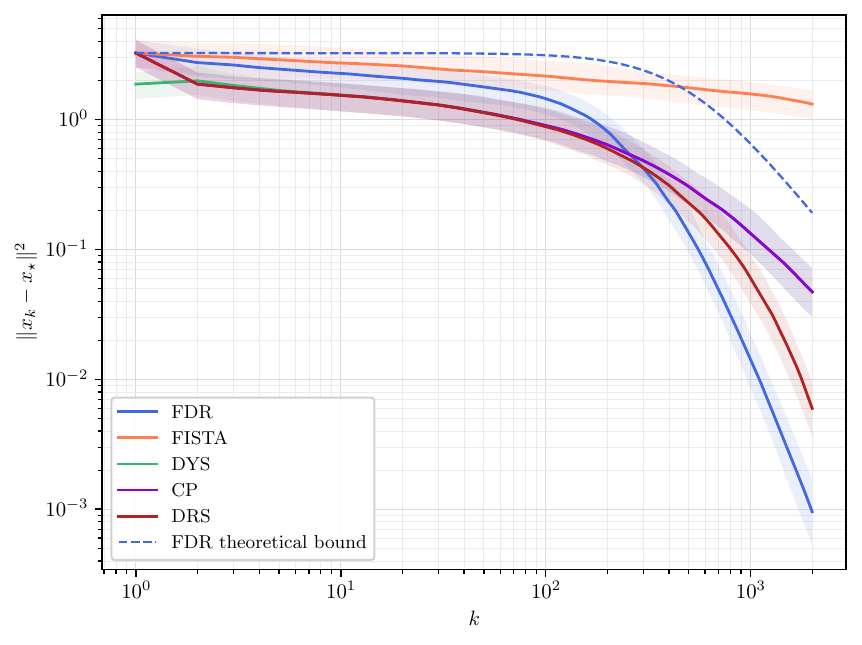}
    \caption{Median squared distance to the solution over \(100\) elastic-net
    instances.  Shaded bands show the interquartile range.}
    \label{fig:elastic-net-regression}
\end{figure}

Figure~\ref{fig:elastic-net-regression} shows that FDR eventually achieves the smallest error among the tested methods. The observed FDR errors also remain below the theoretical bound, which is valid, but we see these errors are much smaller than the theoretical upper bound on these random elastic-net instances.

\FloatBarrier
\section{Conclusion}
In this work, we presented Fast Douglas--Rachford Splitting (FDR), an accelerated Douglas--Rachford-type method for minimizing the sum of a convex and a $\mu$-strongly convex function. We established the guarantee
\[
    \|x_N-x_\star\|^2
    \le
    \frac{\|x_0-x_\star\|^2+\|u_0-u_\star\|^2}{1+4N^2\mu^2},
\]
which improves upon the leading asymptotic constant of the accelerated methods of Chambolle--Pock and Davis--Yin by a factor of $4$. We further established a complexity lower bound showing that both the $\mathcal{O}(1/N^2)$ convergence rate and the leading-order constant achieved by FDR are optimal.

It is worth noting that the acceleration mechanism and the lower-bound construction are fundamentally different than those of Nesterov acceleration. In this sense, the accelerated methods of Chambolle--Pock, Davis--Yin, and FDR should be viewed as belonging to a class of acceleration mechanism distinct from the Nesterov-type acceleration. Developing a more principled understanding of the different classes of acceleration mechanisms is an interesting direction for future work.

Finally, while our lower bound asymptotically matches the upper bound in the leading-order term, a gap remains in the higher-order terms. Since FDR was discovered through BnB-PEP \cite{das2024branch}, a framework for discovering optimal first-order algorithms, we conjecture that the upper bound achieved by FDR is in fact exactly optimal, implying that the current lower bound is likely loose. We leave the task of fully tightening the lower bound to future work.

\begin{acknowledgements}
GMC and BA were supported by the Office of Naval Research grants N000142512231 and N00014-25-1-2319. UJ and EKR were supported by the Air Force Office of Scientific Research under award number FA95502510183.
\end{acknowledgements}

% \section*{Declarations}

\begin{appendices}

\section{Leading constants for accelerated splitting methods}\label{apx:leading_constants}

In this appendix we provide the asymptotic leading constants of the accelerated Davis--Yin splitting algorithm and the accelerated Chambolle--Pock algorithm when specialized to \eqref{eq:primal}. 

\subsection{Accelerated Davis--Yin Splitting}

From Proposition A.1 in \cite{davis2017three}, the iterates of Algorithm \eqref{eq:DYS} satisfy
\[
(1+2\gamma_k\mu)\|x_{k+1}-x_\star\|^2 + \gamma_k^2\|u_{k+1}-u_\star\|^2
\le
\|x_k-x_\star\|^2 + \gamma_k^2\|u_k-u_\star\|^2.
\]

Furthermore, in the proof of Theorem 1.2 in Appendix A of \cite{davis2017three}, we see that the proximal step sizes satisfy
\[
\frac{1}{\gamma_{k+1}^2} = \frac{1+2\gamma_k\mu}{\gamma_k^2},
\]
and
\[
\lim_{k\to\infty} (k+1)\gamma_k = \frac{1}{\mu}.
\]

After $N$ proximal evaluations of $g$ we obtain
\begin{align*}
\|x_{N-1}-x_\star\|^2
&\le
\gamma_{N-1}^2
\left(
\frac{1}{\gamma_0^2}\|x_0-x_\star\|^2 + \|u_0-u_\star\|^2
\right) \\
&=
\frac{\gamma_{N-1}^2}{\gamma_0^2}
\left(
\|x_0-x_\star\|^2 + \gamma_0^2\|u_0-u_\star\|^2
\right).
\end{align*}

Setting $\gamma_0^2 = 1$ to match our assumed initial condition yields the asymptotic rate
\[
\|x_{N-1}-x_\star\|^2 \sim \frac{\|x_0-x_\star\|^2 + \|u_0-u_\star\|^2}{N^2\mu^2}.
\]

\subsection{Accelerated Chambolle--Pock}

The convergence rate of the accelerated Chambolle--Pock algorithm is given in \cite[Theorem 2]{Chambolle2010}, assuming $\sigma_0=1/\tau_0$. For any $\epsilon>0$, there exists $N_0$ such that for all $N\ge N_0$,
\begin{align*}
\|x_N-x_\star\|^2
&\le
\frac{1+\epsilon}{N^2}
\left(
\frac{\|x_0-x_\star\|^2}{\mu^2\tau_0^2}
+
\frac{\|u_0-u_\star\|^2}{\mu^2}
\right) \\
&=
\frac{1+\epsilon}{N^2\mu^2\tau_0^2}
\left(
\|x_0-x_\star\|^2 + \tau_0^2\|u_0-u_\star\|^2
\right).
\end{align*}

Setting $\tau_0^2 = 1$ to match our assumed initial condition and taking $\epsilon \to 0$ yields
\[
\|x_N-x_\star\|^2 \sim \frac{\|x_0-x_\star\|^2 + \|u_0-u_\star\|^2}{N^2\mu^2}.
\]

\subsection{Comparison with FDR}

Recall that FDR satisfies
\begin{align*}
\|x_N-x_\star\|^2
&\le
\frac{\|x_0-x_\star\|^2 + \|u_0-u_\star\|^2}{1+4N^2\mu^2}
\sim
\frac{\|x_0-x_\star\|^2 + \|u_0-u_\star\|^2}{4N^2\mu^2}.
\end{align*}

Thus FDR improves the asymptotic constant of both accelerated Davis--Yin splitting and accelerated Chambolle--Pock by a factor of $4$.

\end{appendices}

% Authors must disclose all relationships or interests that 
% could have direct or potential influence or impart bias on 
% the work: 
%
% \section*{Conflict of interest}
%
% The authors declare that they have no conflict of interest.

\bibliographystyle{spmpsci}
\bibliography{ref.bib}

\end{document}